\documentclass[10pt]{amsart}
\usepackage{geometry}                
\usepackage{graphicx}
\usepackage{dsfont}
\usepackage{amssymb}
\usepackage{amsmath}
\usepackage{epstopdf}
\usepackage{fullpage}
\usepackage{enumerate}
\usepackage{color,subcaption}
\usepackage{amsthm}
\usepackage{placeins}
\usepackage{array}
\usepackage{blkarray}
\usepackage{multirow}
\usepackage{float}
\usepackage{morefloats}
\usepackage{pgfplots}
\usepackage[abs]{overpic}
\usepackage{gensymb} 

\usepackage{tikz}
\usetikzlibrary{calc,positioning}
\usepackage{algorithm,algorithmic}
\usepackage{listings}

\usepackage[colorlinks=true]{hyperref}
\usepackage[nameinlink,capitalise]{cleveref}
\crefname{equation}{}{}

\makeatletter
\def\subsection{\@startsection{subsection}{3}%
  \z@{.5\linespacing\@plus.7\linespacing}{.5\linespacing}%
  {\bf}}
\makeatother

\makeatletter
\def\subsubsection{\@startsection{subsubsection}{3}%
  \z@{.5\linespacing\@plus.7\linespacing}{.5\linespacing}%
  {\it}}
\makeatother

\usepackage{amsopn}
\definecolor{mygray}{rgb}{1.0,0.95,0.90}
\definecolor{truegray}{rgb}{0.9,0.9,0.9}
\definecolor{mygreen}{rgb}{0,0.6,0}
\definecolor{mygreen2}{rgb}{0,0.8,0}
\definecolor{mypink}{rgb}{1.0,0.56,0.81}
\lstdefinestyle{ff}%
{%
    language = C++,
    basicstyle=\ttfamily,
    backgroundcolor=\color{truegray},
    frame=tlbr,framesep=4pt,framerule=1pt,
    captionpos=b,
    commentstyle=\color{mygreen},
    emph=[1]
    {%
        macro,
        border,
        fespace,
    },
    emphstyle=[1]{\bf\color{blue}},
    emph=[2]
    {%
        varf,
        matrix,
        real,
        string,
        int,
        Vh,
        Vh0,
        Vh0i,
        Vh2,
        Vhi,
        mesh,
    },
  emphstyle=[2]{\bf\color{black}},
    emph=[3]
    {%
        int2d,
        int1d,
        on,
        readmesh,
        savemesh,
         load,
        include,
        trunc,
    },
  emphstyle=[3]{\bf\color{violet}},
      emph=[4]
    {%
        EigenValue,
        buildmesh,
        adaptmesh,
        movemesh,
        checkmovemesh,
         solve,
        problem,
        laplace,
        elas,
    },
  emphstyle=[4]{\color{red} \bfseries},  
        emph=[5]
    {%
        volfrac,
        readsol,
        advectRedist,
        normalvec,
        curvature
    },
  emphstyle=[5]{\color{mygreen2} \bfseries}, 
}

\lstdefinestyle{shell}%
{%
    basicstyle=\ttfamily\color{white},
         frame=tlbr,framesep=4pt,framerule=1pt,
    backgroundcolor=\color{black},
    captionpos=b,
     emph=[1]
    {
        python,
    },
    emphstyle=[1]{\color{red}\bfseries},
}

\lstdefinelanguage{mypython}{
   basicstyle=\ttfamily\color{black},
  morekeywords = [1]{print,format},
  morekeywords = [2]{if,else,and,or,break,continue,def},
  keywordstyle = [1]\bf\color{lightgray},
  keywordstyle = [2]{\bf\color{mypink}},
  morestring=[b]",
  stringstyle = {\color{lightgray}},
  morecomment=[l]{\#},
}

\lstdefinestyle{python}%
{%
    language = mypython,
     commentstyle=\color{mygreen},   
     frame=tlbr,framesep=4pt,framerule=1pt,
     showstringspaces=false,
    backgroundcolor=\color{mygray},
    captionpos=b,
     emph=[1]
    {%
        mechtools,
        mshtools,
        inigeom,
        inout, 
        lstools,
        path,
        subprocess,
    },
    emphstyle=[1]{\bf\color{blue}},
  }

\let\oldtocsection=\tocsection

\let\oldtocsubsection=\tocsubsection

\let\oldtocsubsubsection=\tocsubsubsection

\renewcommand{\tocsection}[2]{\hspace{0em}\oldtocsection{#1}{#2}\textbf}
\renewcommand{\tocsubsection}[2]{\hspace{1em}\oldtocsubsection{#1}{#2}}
\renewcommand{\tocsubsubsection}[2]{\hspace{2em}\oldtocsubsubsection{#1}{#2}}

\newcommand{\dv}{\text{\rm div}}
\newcommand{\curl}{\text{\rm curl}}
\newcommand{\pv}{\text{\rm p.v.}}

\renewcommand{\o}{\text{\rm o}}

\renewcommand{\d}{\text{\rm d}}

\newcommand{\com}{\text{\rm com}}

\newcommand{\tr}{\text{\rm tr}}

\newcommand{\e}{\varepsilon}
\newcommand{\I}{\text{\rm I}}
\newcommand{\Id}{\text{\rm Id}}

\newcommand{\calK}{{\mathcal K}}
\newcommand{\calC}{{\mathcal C}}
\newcommand{\calB}{{\mathcal B}}

\newcommand{\calM}{{\mathcal M}}
\newcommand{\calO}{{\mathcal O}}
\newcommand{\calT}{{\mathcal T}}
\newcommand{\calTt}{{\mathcal T}^{\text{\rm temp}}}

\newcommand{\calL}{{\mathcal L}}
\newcommand{\calLt}{{\mathcal L}^{\text{\rm temp}}}

\newcommand{\Cinfty}{{\mathcal C}^{1,\infty}(\mathbb{R}^2;\mathbb{R}^2)}
\newcommand{\R}{{\mathbb R}}

\newcommand{\upc}{u_{\text{\rm pc}}}

\newcommand{\Gobs}{{\Gamma_{\text{\rm obs}}}}
\newcommand{\uobs}{u_{\text{\rm obs}}}
\newcommand{\dmin}{d_{\text{\rm min}}}

\usepackage{soul,xcolor}
\setstcolor{violet}

\begin{document}
\newtheorem{theorem}{Theorem}[section]
\newtheorem{problem}{Problem}[section]
\newtheorem{remark}{Remark}[section]
\newtheorem{example}{Example}[section]
\newtheorem{definition}{Definition}[section]
\newtheorem{lemma}{Lemma}[section]
\newtheorem{corollary}{Corollary}[section]
\newtheorem{proposition}{Proposition}[section]
\newtheorem{propdef}{Definition-Proposition}[section]
\numberwithin{equation}{section}

\title{Body-fitted tracking of 2d open curves with a level set based mesh evolution method}
\author{
C. Dapogny$^{1}$
}

\maketitle
\begin{center}
\emph{\textsuperscript{1} Sorbonne Universit\'e, Universit\'e Paris Cit\'e, CNRS, Inria, Laboratoire Jacques-Louis
Lions, LJLL, F-75005 Paris, France}.
\end{center}

\begin{abstract}
This article describes a novel numerical algorithm for tracking the motion of a collection of open curves in two space dimensions. 
The proposed strategy combines two complementary representations of these curves at each iteration of the evolution process: 
on the one hand, they are meshed explicitly, as a sub-collection of the entities of a mesh of the total computational domain. 
Concurrently, using a variant of the Level Set Method, they are captured implicitly as algebraic combinations of the negative, zero and positive subsets of two auxiliary scalar functions, defined on the whole ambient space. 
This coupling of representations allows to perform accurate geometric or mechanical computations on these curves, while leaving room for large evolution of their shape.
After the description of its main numerical ingredients, several applications examples of this methodology are proposed, where it is used to simulate the motion of open physical discontinuities, such as a vortex sheet roll-up, and to optimize the shape of open-ended curves, e.g. with respect to their anisotropic length, with the aim to improve the trajectory of a laser acting on a powder bed in the context of additive manufacturing, or to reconstruct fracture sets.
\end{abstract} 

\bigskip
\bigskip
\hrule
\tableofcontents
\vspace{-0.5cm}
\hrule
\bigskip
\bigskip


\section{Introduction}


\noindent A wide variety of physical phenomena bring into play evolving interfaces or discontinuities, that take the form of open curves in 2d, or open surfaces in 3d.
For instance, in aerodynamics, the wake generated by an aircraft operating at high Reynolds number features vortex sheets, that are infinitely thin open surfaces across which the tangential fluid velocity is discontinuous and whose complex roll-up is driven by self-induced advection \cite{kundu2024fluid}. 
In solid mechanics, fractures are open surfaces across which the displacement of the medium is discontinuous, that evolve under the effect of the stress concentration at their boundaries \cite{gdoutos2005fracture}. 

A great deal of attention has been paid in the literature to the development of methods for tracking the motion of domains, 
or equivalently, the closed curves (in 2d) or surfaces (in 3d) defining their boundaries. 
Broadly speaking, these can be classified into two categories. On the one hand, Lagrangian methods represent the evolving object by a collection of particles and all the attached physical quantities of interest (e.g. the density, velocity and pressure in the case of a fluid, the mass and displacement of a structure) are expressed directly in terms of these, for instance with the help of smoothing kernels. 
The Smoothed Particle Hydrodynamics (SPH) method, initially proposed in \cite{gingold1977smoothed} to track bulk domains in the field of astrophysics, then extended to fluid dynamics \cite{violeau2012fluid} and solid mechanics \cite{libersky1993high}, is possibly the most famous strategy in this spirit, see \cite{monaghan2005smoothed} for a review. Other Lagrangian methods, such as the Material Point Method \cite{sulsky1994particle} or the Immersed Boundary Method \cite{peskin2002immersed}, complement this representation with a fixed mesh of a larger ``hold-all'' domain; back-and-forth transfers between the particles and the nodes of the mesh allow to perform mechanical computations on the latter. 
Lagrangian methods generally suffer from the large number of particles needed to achieve a reasonable accuracy. Furthermore, repeated resampling of the particles is needed to maintain a suitable description of the curve, especially in the regions where the motion induces a large stretching.
On the other hand, Eulerian methods capture an evolving object via certain auxiliary fields or markers, that are discretized on a fixed mesh of a computational domain. Among these, the Volume Of Fluid method \cite{hirt1981volume} represents a fluid mixture by the fractions of each fluid inside the grid voxels \cite{mohan2024volume}. Alternatively, the Level Set Method accounts for a moving domain as the negative subdomain of a scalar ``level set'' function, defined on the whole computational domain, see \cite{osher2006level,osher1988fronts,sethian1999level} and \cref{sec.LSM} below for a brief outline. The Phase-Field Method features a diffuse interface, also captured by a scalar function, whose thickness is penalized by the addition of a perimeter term to the energetic formulation of the evolution \cite{qin2010phase,steinbach2009phase}. Despite their robust description of large motions, the accuracy of Eulerian representations is limited by the size of the fixed mesh. Furthermore, no explicit support of the interface is available for geometrical or mechanical computations, that have to be expressed in terms of its Eulerian descriptors, often at the expense of approximations.
Of course, a whole range of hybrid methods is available, sharing features from the Eulerian and Lagrangian viewpoints. Among these, let us mention the so-called Arbitrary Lagrangian-Eulerian (ALE) methods, where the computational mesh is allowed to move to better track the evolving interface, in a way which may nevertheless differ from the interface motion; see \cite{donea2004arbitrary} for an overview of this paradigm. Let us also mention the Particle Level Set Method from \cite{enright2002hybrid}, where a ``classical'' level set update procedure is helped with a collection of particles evolving in a Lagrangian fashion.

The above investigations primarily address the evolution of closed contours; comparatively little attention has been devoted to the treatment of open objects -- that is, curves with endpoints in 2d, as exemplified in \cref{fig.2ls}, or pieces of surfaces having contours in 3d. This task is usually more involved as it demands a careful representation of the motion of the boundary of the object, in addition to that of its bulk structure.
Among the Lagrangian strategies implemented to achieve this goal, let us mention the work \cite{leung2009grid}, devoted to tracking the motion of an open curve in two or three space dimensions. The evolving object is discretized with a collection of particles, while a fixed background mesh stores closest-point information at its nodes. The motion of the object is realized by moving the particles, and then updating the closest point information stored at the mesh nodes; a new collection of particles is reconstructed by polynomial fitting, which consistently ensures a uniform sampling of the curve, of the order of the mesh resolution. The information stored at grid points also serves to render topological changes. This framework allows to deal with multiple junctions; however, the treatment of topological changes is heuristic, and no meshed representation of the curve is available. 
A Eulerian method to track the motion of a 2d open curve is introduced in the work \cite{smereka2000spiral}, dealing with the numerical simulation of crystal growth: the evolving curve under scrutiny represents the step line of the crystal, and it is driven by a curvature-based velocity field. Elaborating on a suggestion from \cite{ambrosio1994level}, two level set functions are used to account for the evolving curve, as presented in \cref{sec.2LSM}. Such a strategy was used later in \cite{bar2014mumford,mohieddine2011open} for the purpose of image segmentation, and in \cite{solem2006reconstructing} to fit an open curve (in 2d) or surface (in 3d) to an unorganized point cloud. This idea of using two level set functions for describing an open object has since then been quite popular in the literature, see for instance \cite{moes2002nonb} about its introduction in fracture mechanics, and  \cite{burchard2001motion,cheng2002motion} where it is used to account for the motion of a curve within a surface in 3d. 
Another Eulerian viewpoint on the evolution of possibly open curves or surfaces is the so-called vector distance function method, introduced in \cite{gomes2003vector}: the considered object is represented by the datum of its unsigned distance function at the vertices of a computational mesh and that of its gradient -- which is an extension of the normal vector to its boundary. However elegant, this strategy is a little intricate to implement, as it is difficult to accurately locate the $0$ level set of the representation. We refer to \cite{niethammer2005evolution,salzman2016use} for implementation details about this elegant viewpoint and \cite{ventura2003vector} for an application to the simulation of crack propagation.

In this open setting also, Eulerian approaches generally do not provide an explicit body-fitted discretization amenable to accurate geometric or mechanical computations, whereas purely Lagrangian approaches struggle with robustness under large deformations and topological changes. The present article aims to reconcile these two requirements by proposing a robust, body-fitted methodology for tracking the motion of open curves in 2d, building on a combination of the Level Set Method and remeshing algorithms. 
This strategy draws inspiration from our previous work \cite{allaire2011topology,allaire2013mesh,allaire2014shape}, devoted to the evolution of ``bulk'' domains, and its extension \cite{bonnetier2025numerical,brito2025body} to the case of regions on surfaces.
Our method combines two representations of a two-dimensional open curve $\Gamma$, or more generally, of a collection of such curves. On the one hand, $\Gamma$ is meshed explicitly; more precisely, the computational domain $D$ is equipped with a triangular mesh $\calT$, and a collection $\calL$ of edges of $\calT$ accounts for a discretization of $\Gamma$. On the other hand, $\Gamma$ is described implicitly, with the help of two scalar ``level set'' functions $\phi, \psi: D \to \R$: the $0$ level set of $\phi$ is an extension of $\Gamma$ into a closed curve $\widetilde\Gamma$, and the negative subdomain of $\psi$ delimits $\Gamma$ within $\widetilde\Gamma$. Each operation of the evolution workflow (geometric and mechanical computations related to the evaluation of the velocity field, update of the curve...) is applied on the most suitable representation; efficient numerical algorithms allow to pass from one representation to the other whenever needed. This strategy allows to conduct precise mechanical and geometrical calculations related to $\Gamma$, while leaving room for an arbitrary evolution of the latter.

Most of the conceptual and algorithmic contents of this article hold true in two and three space dimensions, although the implementation is significantly more tedious in the latter case. This article focuses on the 2d setting, where the objects at stake are open curves, or more generally, collections of multiple, disjoint open curves. Although most of the relevant physical applications would take place in 3d, it is already possible to address a few non trivial, interesting applications with this technology. Since the methodology was designed with the three-dimensional setting in mind, the present two-dimensional implementation may be less efficient than more specialized approaches dedicated solely to planar open curves. In a similar spirit, the management of topological changes, however natural with the use of the Level Set Method, is not detailed, as it does not find relevant applications in the presented examples.
The extension of the proposed framework to three space dimensions is an ongoing work, that will be presented in the forthcoming article \cite{dapogny2025levelset}; note however that some of its ingredients have already been used in \cite{belhachmi2025tetrahedral,feppon2026fractured}.

The sources code of our numerical implementation are freely available at the following address: 
\begin{center}
\texttt{https://github.com/dapogny/openls}
\end{center}
It elaborates on the open-source library developed in \cite{dapogny2023shape}, dealing with the optimal design of ``bulk'' shapes. 

The remainder of this article is organized as follows. \cref{sec.2ls} describes the representation of an evolving open curve in $\R^2$ by the datum of two level set functions. Then, \cref{sec.algo} is devoted to our numerical method for tracking this motion: we outline the key ingredients of our strategy, namely the numerical method for generating two level set functions from a line mesh of an open curve, and, conversely, the remeshing algorithm used to discretize an open curve described by two level set functions. Since these algorithms will be the subject of a dedicated article, in the much more technical three-dimensional setting, we deliberately keep the discussion at a high level, omitting technical details. 
\cref{sec.appphys} numerically assesses our framework with two applications where the velocity field at play is ``simple'', in the sense that it is explicitly calculated from a discretization of the curve. The next \cref{sec.appso} investigate three applications of our methodology in the more challenging perspective of shape optimization, where the velocity field driving the motion typically depends on the calculation of geometrical quantities or on the solution of one or several boundary value problems -- thus demanding an accurate, meshed description of the curve. We notably consider the minimization of anisotropic perimeter functionals, the optimization of the laser path of an additive manufacturing process, and the reconstruction of the shape of a fracture within the underground from measurements of the boundary of the upper surface. A conclusion and a few leads for future work are given in \cref{sec.concl}.
This article ends with a series of appendices, containing modeling or technical details related to the different applications of the article; when relevant and not detrimental to clarity, these are provided in a context which is slightly more general than that of the main text.


\section{A two-level-set method for the representation of a moving open curve}\label{sec.2ls}


\noindent This section describes the representation of an evolving open curve by means of two ``level set'' functions
and sets some notations that are used throughout the article.
For completeness, we briefly recall in \cref{sec.LSM} the basic features of the ``classical'' Level Set Method devoted to evolving closed curves, before turning to the two-function variant used in this article to deal with open curves in \cref{sec.2LSM}.

\subsection{A brief reminder of the ``classical'' Level Set Method}\label{sec.LSM}

\noindent The Level Set Method was pioneered in \cite{osher1988fronts} as an efficient means to capture the motion of an evolving domain, and we presently recall its salient features, see for instance the reference books \cite{osher2006level,sethian1999level} for more exhaustive presentations. 

Let $D \subset \R^2$ be a large ``hold-all'' domain. The Level Set Method consists in capturing a smooth subdomain $\Omega \subset D$ as the negative region of a scalar ``level set'' function $\phi : D \to \R$, i.e. 
\begin{equation}\label{eq.1ls}
\forall x \in D, \quad \left\{
\begin{array}{cl}
\phi(x) < 0 & \text{if } x \in \Omega,\\
\phi(x) = 0 &\text{if } x \in \partial \Omega, \\
\phi(x) >0 & \text{otherwise.}
\end{array}
\right.
\end{equation}
This change in perspective does not incur loss of information about $\Omega$; in particular, all the geometric quantities attached to $\Omega$ can be expressed in terms of a sufficiently smooth level set function $\phi$; for instance, the unit normal vector $n$ to $\partial \Omega$, pointing outward $\Omega$, and its mean curvature $\kappa$ equal: 
$$n(x) = \frac{\nabla\phi(x)}{\lvert \nabla \phi(x)\lvert} \text{ and } \kappa(x) = \dv\left( \frac{\nabla\phi(x)}{\lvert \nabla \phi(x)\lvert}\right), \quad x \in \partial\Omega. $$

Let us now consider a time-dependent domain $\Omega(t)$, evolving over a time period $(0,T)$ under the effect of a velocity field $V: \R_t \times \R^2_x \to \R^2$. 
Introducing a level set function $\phi(t,\cdot)$ for $\Omega(t)$, such that \cref{eq.1ls} holds at each time $t \in (0,T)$, and a level set function $\phi_0$ for the initial domain $\Omega(0)$, an elementary application of the chain rule at the formal level shows that the motion of $\Omega(t)$ translates as the following advection-like equation for $\phi$:
\begin{equation}\label{eq.1lsadvect}
\left\{
\begin{array}{cl} 
\frac{\partial \phi}{\partial t}(t,x) + V(t,x) \cdot \nabla \phi(t,x) = 0 & \text{on } (0,T) \times D,\\
\phi(0,x) = \phi_0(x) & \text{on } D.
\end{array}
\right.
\end{equation} 
Alternatively, introducing the normal component $v(t,x) := V(t,x) \cdot \frac{\nabla\phi(t,x)}{\lvert\nabla \phi(t,x)\lvert}$ of the velocity field, \cref{eq.1lsadvect} takes the form a Hamilton-Jacobi equation: 
\begin{equation}\label{eq.1lshj}
\left\{
\begin{array}{cl} 
\frac{\partial \phi}{\partial t}(t,x) + v(t,x) \lvert \nabla \phi(t,x) \lvert= 0 & \text{on } (0,T) \times D,\\
\phi(0,x) = \phi_0(x) & \text{on } D.
\end{array}
\right.
\end{equation}

Note that \cref{eq.1lsadvect,eq.1lshj} are not ``true'' advection or Hamilton-Jacobi equations, insofar as the velocity field $V(t,x)$ may depend on $\Omega(t)$, thus on $\phi(t,x)$ itself.
The equations \cref{eq.1lsadvect,eq.1lshj} hold true in the classical sense as long as $\Omega(t)$, $\phi(t,x)$ and $V(t,x)$ are ``smooth'', but they have to be interpreted in the sense of viscosity as soon as singularities appear \cite{crandall1992user}. We refer to \cite{ambrosio1994level,evans1991motion} about the mathematical foundations of the level set approach for the mean curvature flow, and to \cite{giga2006surface} for a larger perspective.

\begin{remark}\label{rem.meetbdy}
The framework of this section also allows to represent curves $\Gamma$ that are closed in the sense that they meet the boundary $\partial D$ of the computational domain $D$. In such situation, $\Gamma$ delimits two complementary subdomains of $D$, that are characterized by different signs of the level set function $\phi$: intuitively, one may imagine that $\Gamma$ may be extended outside $D$ by a closed curve.
\end{remark} 

\begin{remark}
As we have mentioned, in applications, the velocity field $V(t,x)$ and its normal component $v(t,x)$ depend on $\Omega(t)$ in a complex way, e.g. through the solution of a boundary-value problem posed on $\Omega(t)$. 
One pragmatic numerical strategy to treat such situations consists in decomposing the time interval $(0,T)$ into a series $t^n= n \Delta t$ of sub-intervals, $n=0,\ldots$, where $\Delta t >0$, and freezing $V(t,x)$ on each such interval, setting 
$$ V(t,x) \approx V^n(x) := V(t^n,x) \:\text{ for } t \in (t^n,t^{n+1}). $$
The evolution equation \cref{eq.1lsadvect} thus boils down to a series of ``true'' advection equations. Likewise, if only the normal component $v(t,x)$ is frozen, one obtains a series of ``true'' Hamilton-Jacobi equations with time-independent normal velocity $v^n(x)$.
\end{remark}

\begin{remark}\label{rem.redist}
The capture of the motion of $\Omega(t)$ by the evolution equation \cref{eq.1lsadvect} or \cref{eq.1lshj} holds true for any choice of the level set function $\phi(t,\cdot)$ associated to $\Omega(t)$. 
In practice, however, it is well-known that very ``steep'' or ``flat'' gradients of $\phi(t,\cdot)$ cause major numerical artifacts, see \cite{chopp1993computing}. For this reason, it is often chosen to handle the signed distance function $d_{\Omega(t)}$ of $\Omega(t)$:
\begin{equation}\label{eq.sdf}
d_{\Omega(t)}(x) = \left\{
\begin{array}{cl}
-d(x,\partial\Omega(t)) & \text{for } x \in \Omega(t), \\ 
0& \text{for } x \in \partial \Omega(t), \\ 
d(x,\partial\Omega(t)) & \text{for } x \in D \setminus \overline{\Omega(t)},
\end{array}
\right.  
\text{ where } d(x,\partial \Omega(t)) = \inf\limits_{y \in \partial\Omega(t)} \lvert x- y \lvert.
\end{equation}
 Since this signed distance property is not preserved through the resolution of \cref{eq.1lsadvect} or \cref{eq.1lshj}, most numerical algorithms for the resolution of these evolution equations periodically feature a stage where $\phi(t,\cdot)$ is restored as a signed distance function -- an operation referred to as redistancing in the literature.
\end{remark}

\subsection{Representation of open curves with two level set functions}\label{sec.2LSM}

\noindent This section describes a variant of the Level Set Method adapted to the representation of open curves, whose endpoints are strictly contained in the computational domain $D$; this idea was proposed in \cite{cheng2002motion,smereka2000spiral}, building on a theoretical suggestion of \cite{ambrosio1994level}, see also \cite{moes2002nonb} about its introduction in the context of fracture mechanics. 

Still denoting by $D$ the fixed computational domain, let $\Gamma \subset D$ be a smooth, simple open 2d curve. 
Note that $\Gamma$ may actually be a collection of several disjoint such curves, but for notational brevity, throughout this article, we refer to it as an ``open curve''. 
We also denote by $\Sigma$ the boundary of $\Gamma$, i.e. $\Sigma$ is a set of isolated endpoints. We characterize $\Gamma$ via two smooth domains $\Omega, \calO \subset \R^2$, such that: 
\begin{equation}\label{eq.2LS}
\Gamma = \partial \Omega \cap \calO, \text{ and } \Sigma = \partial \Omega \cap \partial \calO,
\end{equation}
see \cref{fig.2ls} for an illustration.
We then describe $\Omega$ and $\calO$ as the negative subdomains of two respective level set functions $\phi, \psi: D \to \R$, i.e. 
\begin{equation}\label{eq.2ls}
 \forall x \in D, \quad \left\{
\begin{array}{cl}
\phi(x) < 0 & \text{if } x \in \Omega, \\
\phi(x) = 0 & \text{if } x \in \partial\Omega, \\
\phi(x) > 0 & \text{if } x \in D\setminus \overline{\Omega}, \\
\end{array}
\right. 
\text{ and } 
 \left\{
\begin{array}{cl}
\psi(x) < 0 & \text{if } x \in \calO, \\
\psi(x) = 0 & \text{if } x \in \partial\calO, \\
\psi(x) > 0 & \text{if } x \in D\setminus \overline{\calO}. \\
\end{array}
\right. 
\end{equation}
Intuitively, the $0$ isoline of $\phi$ is a closed curve $\widetilde \Gamma:= \partial \Omega$ (or a collection of such), being understood that it may reach the boundary of $D$ to realize this feature, see \cref{fig.2ls} (b). The role of the secondary level set function $\psi$ is to delimit $\Gamma$ within $\widetilde \Gamma$. 
In this setting, $\Gamma$ and $\Sigma$ are thus characterized by:
$$\Gamma = \Big\{ x \in D \text{ s.t. } \phi(x) = 0 \text{ and } \psi(x) <0 \Big\}, \text{ and } \Sigma =  \Big\{ x \in D \text{ s.t. } \phi(x) = 0 \text{ and } \psi(x) =0 \Big\}.$$

\begin{figure}[!ht]
    \centering
    \begin{tabular}{cc}
\begin{minipage}{0.47\textwidth}
\centering
\begin{overpic}[width=1.0\textwidth]{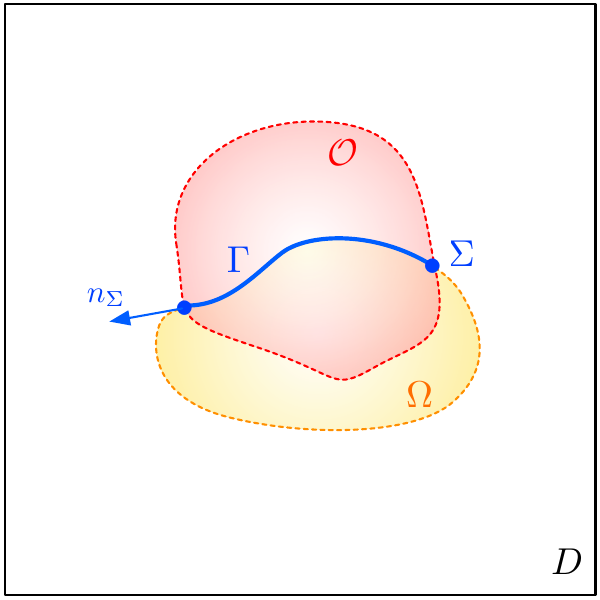}
\put(0,-3){\fcolorbox{black}{white}{a}}
\end{overpic}
\end{minipage} & 
\begin{minipage}{0.47\textwidth}
\centering
\begin{overpic}[width=1.0\textwidth]{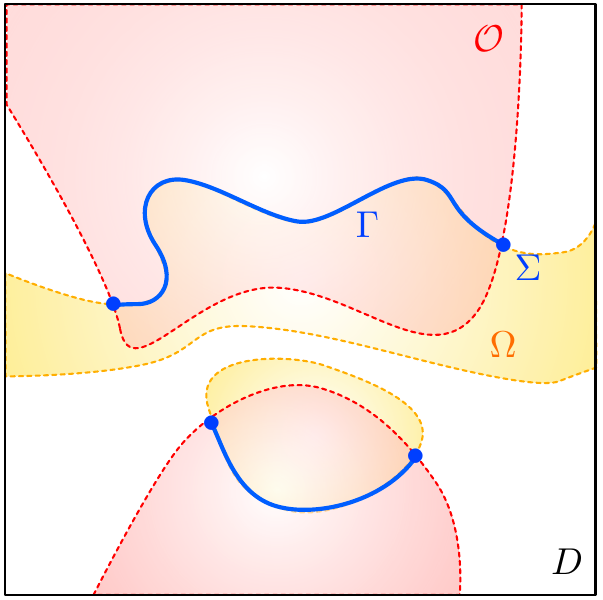}
\put(0,-3){\fcolorbox{black}{white}{b}}
\end{overpic}
\end{minipage}  
\end{tabular}  
 \caption{\it Examples of open curves described by two domains $\Omega$ and $\calO$ according to \cref{eq.2LS}; (a) Case where $\Omega$ and $\calO$ are strictly contained in $D$; (b) Case where  $\Omega$ and $\calO$ have multiple connected components and intersect the boundary of the computational domain $D$.}
  \label{fig.2ls}
\end{figure}

Analogously to the case of closed curves considered in \cref{sec.LSM}, this representation allows to capture 
the evolution of an open curve $\Gamma(t)$ over a time period $(0,T)$, according to a velocity $V: \R_t \times \R^2_x \to \R^2$.
Indeed, let $\phi(t,\cdot)$ and $\psi(t,\cdot)$ be two level set functions for $\Gamma(t)$ -- i.e. \cref{eq.2ls} holds at each time $t\in (0,T)$ -- 
and let $\phi_0, \psi_0: D \to \R$ be two level set functions for the initial set $\Gamma(0)$. The motion of $\Gamma(t)$ translates in terms of the following advection-like equations about $\phi$ and $\psi$: 
\begin{multline}\label{eq.2lsadvect}
\left\{
\begin{array}{cl} 
\frac{\partial \phi}{\partial t}(t,x) + V(t,x) \cdot \nabla \phi(t,x) = 0 & \text{on } (0,T) \times D,\\
\phi(0,x) = \phi_0(x) & \text{on } D, 
\end{array}
\right.
\text{ and }  \\
\left\{
\begin{array}{cl} 
\frac{\partial \psi}{\partial t}(t,x) + V(t,x) \cdot \nabla \psi(t,x) = 0 & \text{on } (0,T) \times D,\\
\psi(0,x) = \psi_0(x) & \text{on } D.
\end{array}
\right.
\end{multline} 

\begin{remark}
In addition to the requirement discussed in \cref{rem.redist}, whereby $\phi$ and $\psi$ should be (close to) signed distance functions,
it is crucial that these functions satisfy the following condition near $\Sigma$:
\begin{equation}\label{eq.orthoLS}
\forall x \in \Sigma, \quad \nabla \phi(x) \cdot \nabla \psi(x) = 0.
\end{equation}
Intuitively, this relation ensures that the $0$ level sets $\widetilde \Gamma = \partial \Omega$ and $\partial \calO$ of $\phi$ and $\psi$ intersect in an orthogonal fashion, 
which helps to avoid undesirable collisions of these sets during the solution of the evolution equations \cref{eq.2lsadvect}.
In practice, the condition \cref{eq.orthoLS} is not preserved through the resolution of \cref{eq.2lsadvect}, raising the need for a periodic re-orthogonalization of $\phi$ and $\psi$, see e.g. \cite{burchard2001motion,moes2002nonb}. 
\end{remark}


\section{Body-fitted evolution of open curves in a two-level set framework}\label{sec.algo}


\noindent Let $D \subset \R^2$ be a fixed computational domain, and let $\Gamma(t) \subset D$ be an open curve, evolving through a time period $(0,T)$ under a velocity field $V: \R_t \times \R^2_x \to \R^2$. 
Let $t^n = n \Delta t$, $n=0,\ldots, N := T/\Delta t$ be a discretization of $(0,T)$ based on a time step $\Delta t>0$. For each $n=0,\ldots,N$, we label with an $^n$ superscript all the instances of the time-dependent objects under scrutiny at the $n^{\text{th}}$ iteration. 

The proposed strategy for tracking the motion of $\Gamma(t)$ combines two complementary representations of its configuration $\Gamma^n$ at each iteration of the evolution process:
\begin{itemize}
\item (Meshed representation) The total domain $D$ is equipped with a valid, conforming and high-quality triangular mesh $\calT^n$; a discretization of $\Gamma^n$ is available as a collection $\calL^n$ of edges pertaining to $\calT^n$, see \cref{fig.2lsrep} (a); 
\item (Level set representation) $\Gamma^n$ is accounted for by the datum of two level set functions $\phi^n, \psi^n :D\to \R$, along the lines of \cref{sec.2LSM}, see \cref{fig.2lsrep} (b). 
\end{itemize}
Efficient numerical methods, that are described below (see \cref{sec.gen2LS,sec.remesh}) allow to pass from one of these representations to the other whenever needed. 
Thus, each operation of the evolution process can be conducted by using the most suitable representation: the geometrical or mechanical analyses involved in the calculation of the velocity field $V^n(x)$ enjoy the accuracy of an exact mesh of $\Gamma^n$, while the evolution of the curve between two successive steps is conveniently accounted for by the Level Set Method. \par\medskip

\begin{figure}[!ht]
    \centering
    \begin{tabular}{cc}
\begin{minipage}{0.3\textwidth}
\centering
\begin{overpic}[width=1.0\textwidth]{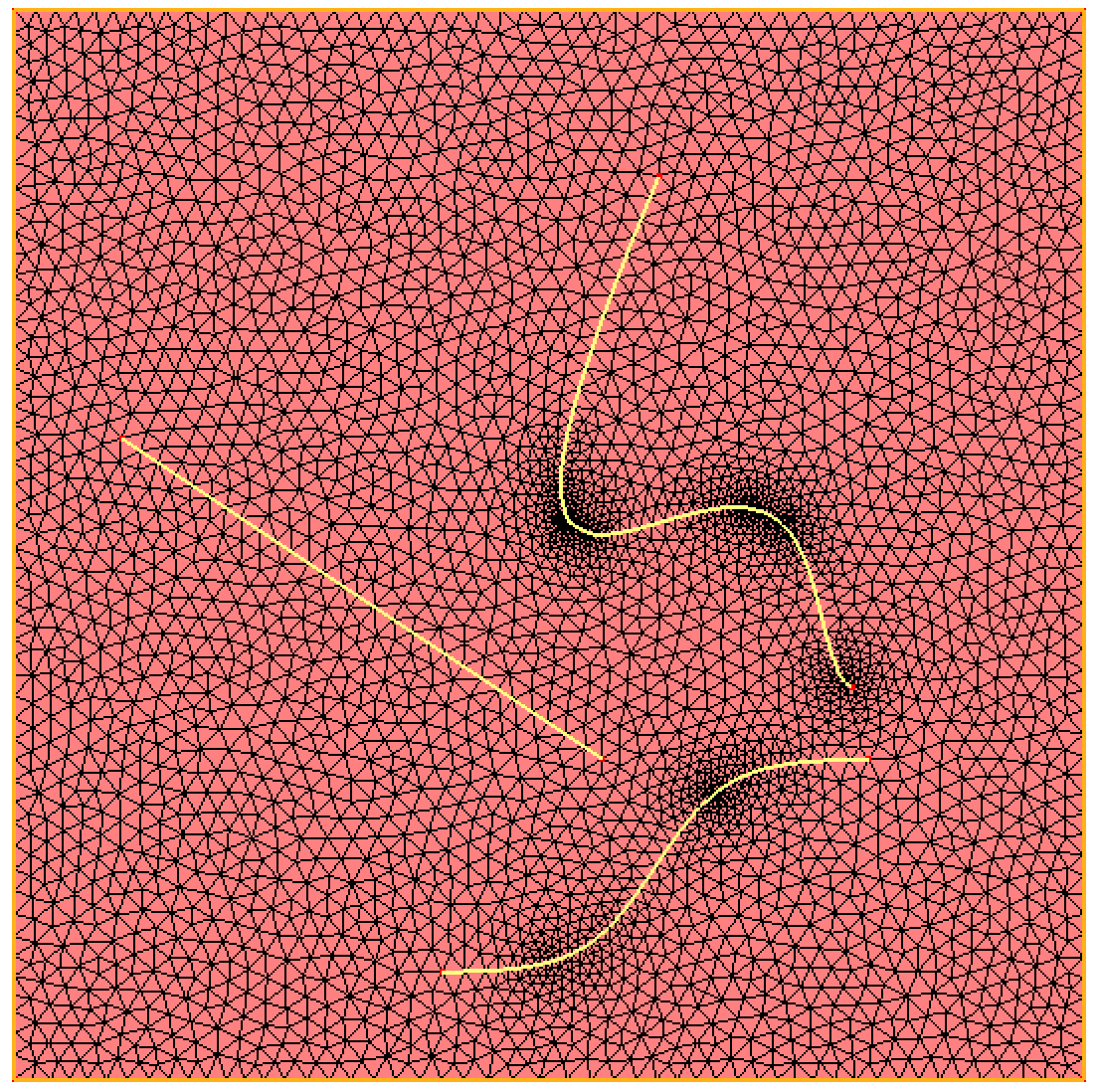}
\put(0,-3){\fcolorbox{black}{white}{a}}
\end{overpic}
\end{minipage} & 
\begin{minipage}{0.7\textwidth}
\centering
\begin{overpic}[width=1.0\textwidth]{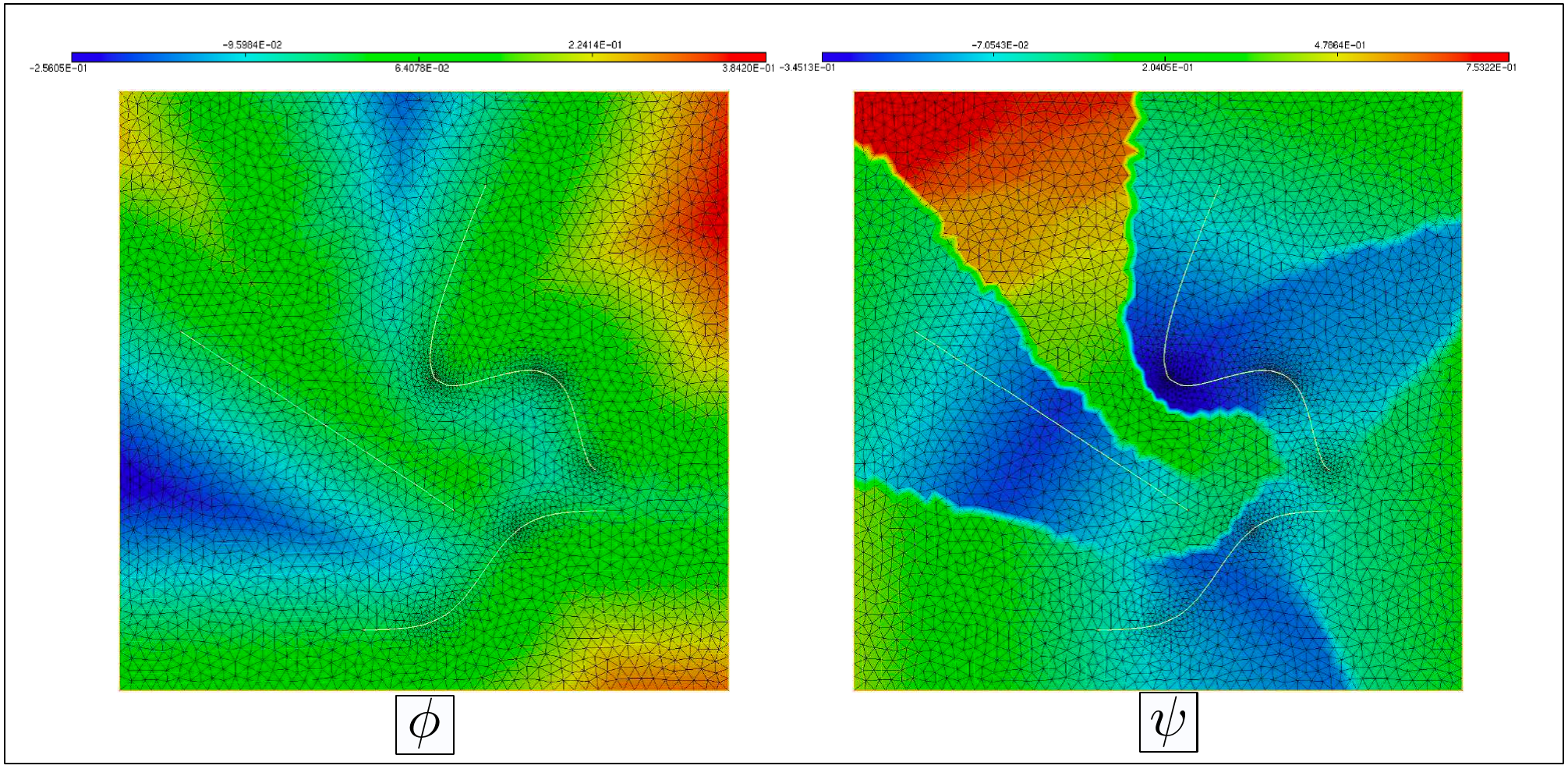}
\put(0,-3){\fcolorbox{black}{white}{b}}
\end{overpic}
\end{minipage}  
\end{tabular}  
 \caption{\it Two complementary representations of an open curve $\Gamma \subset D$ in the tracking strategy of \cref{sec.algo}; (a) Meshed representation of $\Gamma$; (b) Level set representation of $\Gamma$.}
  \label{fig.2lsrep}
\end{figure}

Our numerical strategy is summarized in \cref{algo.openls}, and its main steps are discussed in the next sections. In a nutshell, each iteration $n=0,\ldots$ starts with a meshed representation of $\Gamma^n$, i.e. $D$ is equipped with a triangular mesh $\calT^n$ and $\Gamma^n$ is discretized as a collection $\calL^n$ of edges of $\calT^n$. Two level set functions $\phi^n$, $\psi^n$ for $\Gamma^n$ are then generated at the vertices of the mesh $\calT^n$, thanks to the algorithm described in \cref{sec.gen2LS}. Then, the velocity field $V^n(x)$ is calculated; this stage depends on the nature of the motion, and it may require geometric computations (e.g. of the curvature of $\Gamma^n$), or the resolution of one or several boundary value problems, which is achieved thanks to the Finite Element Method used on the mesh $\calT^n$. The update of $\Gamma^n$ is realized in the level set framework: the equations \cref{eq.2lsadvect} are solved on $\calT^n$, with initial data $\phi^n$, $\psi^n$, over the time period $(0,\Delta t)$, thanks to the numerical method outlined in \cref{sec.lsadv}. This results in two level set functions $\phi^{n+1}$ and $\psi^{n+1}$ for the new curve $\Gamma^{n+1}$, that are known through their values at the vertices of $\calT^n$. Eventually, a new mesh $\calT^{n+1}$ of $D$ is generated from this datum, where $\Gamma^{n+1}$ appears explicitly, see \cref{sec.remesh} about this operation.

\begin{algorithm}[ht]
\caption{Body-fitted tracking of the motion of an open curve}
\label{algo.openls}
\begin{algorithmic}[0]
\STATE \textbf{Initialization:} Mesh $\calT^0$ of the computational domain $D$, in which the initial curve $\Gamma^0$ is explicitly discretized, as a collection $\calL^0$ of edges of $\calT^0$.
\FOR{$n=0,...,$ until convergence}
\STATE \begin{enumerate}
\item Calculate two level set functions $\phi^n$, $\psi^n$ for $\Gamma^n$ at the vertices of the mesh $\calT^n$, representing $\Gamma^n$ via \cref{eq.2ls}. 
\item Calculate the velocity field $V^n(x)$ on the mesh $\calT^n$. 
\item Solve the evolution equations \cref{eq.2lsadvect} on the mesh $\calT^n$ and the time interval $(0,\Delta t)$, with initial data $\phi^n$, $\psi^n$ and velocity field $V^n(x)$, 
to obtain the new level set functions $\phi^{n+1}$ and $\psi^{n+1}$.
\item Create a new mesh $\calT^{n+1}$ of $D$ in which the new surface $\Gamma^{n+1}$ is explicitly discretized. 
\end{enumerate}
\ENDFOR
\RETURN Mesh $\calT^n$, enclosing a line mesh $\calL^n$ for $\Gamma^n$. 
\end{algorithmic}
\end{algorithm}\par\medskip

\subsection{Generation of two level set functions from a meshed representation}\label{sec.gen2LS}

\noindent Let the computational domain $D$ be equipped with a triangular mesh $\calT$, and let $\Gamma \subset D$ be an oriented open curve,
supplied as a line mesh $\calL$; for the purpose of this section, the edges of $\calL$ may not belong to $\calT$. 
We aim to generate two level set functions $\phi$, $\psi: D \to \R$ at the vertices of $\calT$, representing $\Gamma$ via \cref{eq.2LS} and satisfying the orthogonality relation \cref{eq.orthoLS}.  

This section presents a general and efficient algorithm to achieve this seldom considered task, to the best of our knowledge. Since its more intricate 3d instance will be thoroughly described in a dedicated article \cite{dapogny2025levelset}, we limit ourselves with a brief description in the present 2d context.

The key building block is the celebrated Fast Marching Method \cite{kimmel1998computing,sethian1996fast,sethian1999fast}, which calculates the (unsigned) distance function $d(\cdot,K)$ to a subset $K \subset D$ at the vertices of the mesh $\calT$ of $D$. 
In a nutshell, this method starts with the (unexpensive) calculation of the exact distance to $K$ at the ``close'' vertices of the elements of $\calT$ intersecting $K$.  
This ``nearby'' information is then propagated to the whole mesh $\calT$: iteratively, the algorithm computes ``trial'' values from the known distance values and the smallest of them is definitely accepted. Crucially, the method accepts the vertices of $\calT$ in order, from those closer to those farther from $K$; this feature has inspired a variant of the method, which concurrently computes the normal extension to the whole mesh $\calT$ of a quantity defined on $K$ \cite{adalsteinsson1999fast}.\par\medskip

Our algorithm for calculating two level set functions $\phi$ and $\psi$ representing the open curve $\Gamma$ proceeds in three stages, that are illustrated on \cref{fig.illusmshdist}.
\begin{itemize}
\item \textit{Step 1:} The unsigned distance function $d(\cdot,\Gamma)$ is calculated thanks to the Fast Marching Method. In doing so, whenever a vertex $x \in \calT$ is accepted by the procedure, the value of the distance is stored in an intermediate level set function $\phi^{\text{temp}}$, and it is endowed with a sign, depending on the orientation of $x$ with respect to $\Gamma$ -- an information which is available thanks to the ordered travel of vertices guaranteed by the Fast Marching Method. By construction, the $0$ level set of $\phi^{\text{temp}}$ extends $\Gamma$ 
into a curve $\widetilde \Gamma$ which is closed, possibly because it goes up to the boundary of the computational domain $D$, see \cref{rem.meetbdy}. 
\item \textit{Step 2:} The Fast Marching Method is used to calculate the (geodesic) signed distance to $\Gamma$ within $\widetilde\Gamma$. This information is stored at the vertices of the triangles in the set $\calK$ of those intersecting the $0$ level set of $\phi^{\text{temp}}$. This step yields a function $\psi^{\text{temp}}$ which contains the values of this signed distance to $\Gamma$ within $\widetilde \Gamma$, computed at the vertices of the triangles in $\calK$. 
\item \textit{Step 3:} We use once more the Fast Marching Method to compute the signed distance function to $\widetilde{\Gamma}$, which is stored in $\phi$. 
Meanwhile, we extend the function $\psi^{\text{temp}}$ from $\widetilde{\Gamma}$ to the whole mesh $\calT$ in the normal direction to $\widetilde\Gamma$, which yields the secondary level set function $\psi$.
\end{itemize}
This strategy guarantees that the curve $\widetilde\Gamma$ extends $\Gamma$ by a straight line in the neighborhood of $\Sigma$. Moreover, the orthogonality relation \cref{eq.orthoLS} between the $0$ level sets of $\phi$ and $\psi$ is satisfied by construction. 

 This algorithm is implemented in the open-source library \texttt{mshdist} \cite{dapogny2012computation}, which is part of the \texttt{ISCD Toolbox} \cite{iscdbox}. 
 
\begin{figure}[!ht]
    \centering
\includegraphics[width=1.0\textwidth]{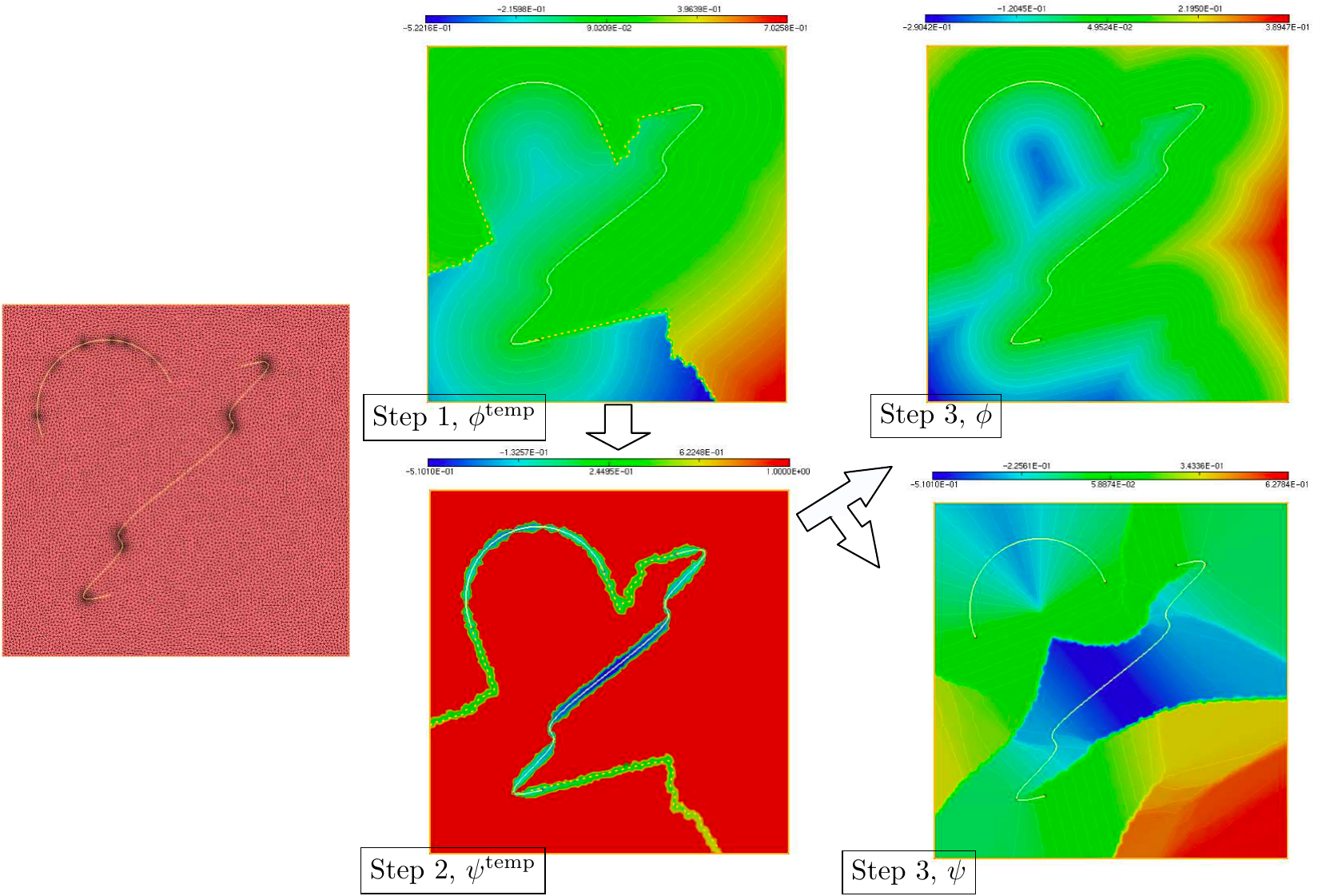}
 \caption{\it Illustration of the numerical algorithm of \cref{sec.gen2LS} for computing two level set functions associated to an input line mesh of the curve $\Gamma$ (in yellow); its closed extension $\widetilde{\Gamma}$ is depicted as a yellow dotted curve.}
  \label{fig.illusmshdist}
\end{figure}

\subsection{Resolution of the level set evolution equations}\label{sec.lsadv}

\noindent Let $\Gamma(t)$ be an open curve, evolving through a generic time period $(0,T)$ according to a stationary velocity field $V(x)$. 
For any $t \in (0,T)$, let $\phi(t,\cdot)$ and $\psi(t,\cdot)$ be two level set functions for $\Gamma(t)$, and let $\phi_0$, $\psi_0$ be two level set functions for the initial curve $\Gamma(0)$. 
We aim to solve the advection equations \cref{eq.2lsadvect} accounting for the evolutions of $\phi$ and $\psi$ on a mesh $\calT$ of the computational domain $D$; for definiteness, we focus on that involving $\phi$:
\begin{equation}\label{eq.advphi}
\left\{ 
\begin{array}{cl}
\frac{\partial\phi}{\partial t}(t,x) + V(x) \cdot \nabla \phi(t,x) = 0 & \text{for } t \in (0,T), \: x \in D, \\
\phi(0,x) = \phi_0(x) & \text{for } x \in D, 
\end{array}
\right. 
\end{equation}

In our framework, this equation is solved thanks to the well-known method of characteristics \cite{pironneau1989finite,strain1999semi}. The latter relies on the following explicit expression of the solution: 
\begin{equation}\label{eq.phicharac}
\phi(t,x) = \phi_0(X_t(0,x)), \:\: t \in (0,T), \:\: x \in D,
\end{equation}
where, for any time $t_0$, the mapping $t\mapsto X_{t_0}(t,x)$ is the characteristic curve of the vector field $V$ emerging from $x$ at time $t_0$, that is:
\begin{equation}\label{eq.characcurveexpl}
\left\{
\begin{array}{cl}
\frac{\d X_{t_0}}{\d t} (t,x) = V(t,X_{t_0}(t,x)) & \text{for } t \in (0,T), \\
X_{t_0}(t_0,x) = x;&
\end{array}
\right.
\end{equation}
intuitively, $X_{t_0}(t,x)$ is the position at time $t$ of a particle lying in $x$ at time $t_0$. 
Thus, $\phi$ account for a transport of the quantity $\phi_0$ along the trajectories induced by the velocity field $V$. 

The numerical resolution of \cref{eq.advphi} is based on the direct evaluation of the formula \cref{eq.phicharac} at $t=T$: for each vertex $x$ of $\calT$, the ordinary differential equation \cref{eq.characcurveexpl} for the trajectory $t\mapsto X_T(t,x)$ is solved thanks to a Runge-Kutta 4 method, and the initial function $\phi_0$ is interpolated at the resulting point from its values at the vertices of $\calT$.

This algorithm is implemented in the open-source code \texttt{Advection} described in \cite{bui2012accurate}, which is part of the \texttt{ISCD Toolbox} \cite{iscdbox}.

\subsection{Discretization of an open curve into a mesh from a two-level set representation}\label{sec.remesh}

\noindent Let the computational domain $D$ be equipped with a triangular mesh $\calT$, and let $\Gamma \subset D$ be an open curve. 
The latter is defined by the datum of two level set functions $\phi, \psi : D \to \R$ that are discretized at the vertices of $\calT$ and interpolated linearly from these values when evaluated inside the elements of $\calT$.
We aim to create a new, high-quality mesh $\widetilde{\calT}$ of $D$ in which $\Gamma$ is explicitly discretized. 

We proceed in two steps, that are illustrated on \cref{fig.remesh} and described with a little more details in the next sub-sections. 
\begin{enumerate}
\item We split each triangle $T \in \calT$ intersected by $\Gamma$ in such a way that the mesh $\calT^{\text{temp}}$ contains an explicit discretization of $\Gamma$ as a collection $\calLt$ of edges. This valid and conforming mesh is unfortunately of very low quality. 
\item We apply local modifications to $\calT^{\text{temp}}$ to obtain a new, high-quality mesh $\widetilde\calT$ of $D$ which still features an explicit discretization $\widetilde{\calL}$ of $\Gamma$.
\end{enumerate}

\begin{figure}[!ht]
    \centering
    \begin{tabular}{ccc}
\begin{minipage}{0.28\textwidth}
\centering
\begin{overpic}[width=0.8\textwidth]{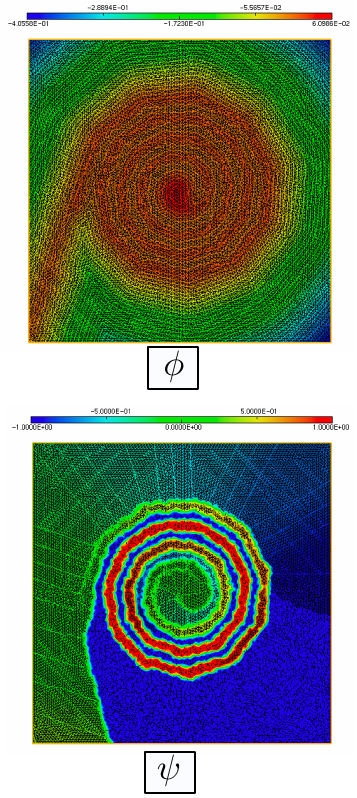}
\put(0,-3){\fcolorbox{black}{white}{a}}
\end{overpic}
\end{minipage} & 
\begin{minipage}{0.33\textwidth}
\centering
\begin{overpic}[width=1.0\textwidth]{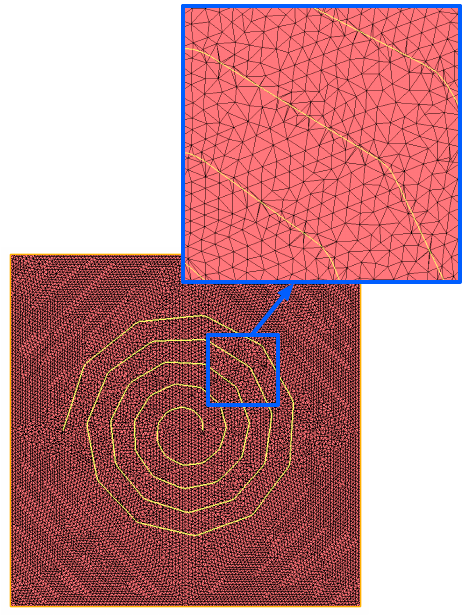}
\put(0,-3){\fcolorbox{black}{white}{b}}
\end{overpic}
\end{minipage}  & 
\begin{minipage}{0.33\textwidth}
\centering
\begin{overpic}[width=1.0\textwidth]{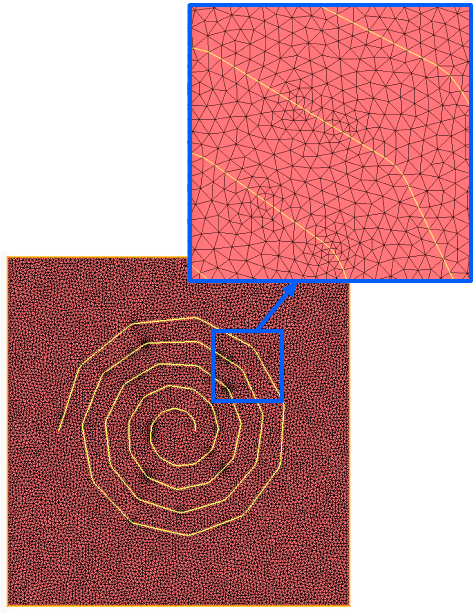}
\put(0,-3){\fcolorbox{black}{white}{c}}
\end{overpic}
\end{minipage} 
\end{tabular}  
 \caption{\it Construction of a high-quality mesh $\widetilde\calT$ of $D$ containing an explicit discretization of an open curve $\Gamma$ in \cref{sec.remesh}; (a) Two level set representation $\phi$, $\psi$ of $\Gamma$ on an initial mesh $\calT$ of $D$; (b) Intermediate, low-quality mesh $\calT^{\text{temp}}$ of $D$ which is body-fitted to $\Gamma$; (c) Desired high-quality mesh $\widetilde\calT$ of $D$ body-fitted to $\Gamma$.}
  \label{fig.remesh}
\end{figure}

\begin{remark}
This operation is interesting in a number of applications. For instance, it has recently been used in geophysics to discretize a network of faults in the underground, see \cite{legentil2022testing} and the recent three-dimensional work \cite{belhachmi2025tetrahedral}. This algorithm has also been used in the article \cite{feppon2026fractured}, devoted to a numerical method for solving boundary value problems on such fractured meshes.
\end{remark}
 
\subsubsection{Explicit discretization of an implicitly defined open curve into the computational mesh}

\noindent This stage is a variation of the marching cubes algorithm \cite{lorensen1987marching} and its simplicial version \cite{chan1998new,doi1991efficient}, designed for isosurface extraction. 
It starts from a mesh $\calT$ of the computational domain $D$, and the datum of two level set functions $\phi$, $\psi$, defined at its vertices, accounting for the curve $\Gamma$. We form the band $\calB$ of the elements $T \in \calT$ surrounding $\Gamma$, i.e. satisfying the following two conditions:
\begin{enumerate}[(i)]
\item A least one vertex of $T$ has a positive value of $\phi$, and at least another vertex has a negative value of $\phi$, i.e. $T$ intersects the $0$ level set of $\phi$;
\item At least one vertex of $T$ bears a negative value of $\psi$, i.e. $T$ lies inside or on the boundary of the negative subdomain of $\psi$.
\end{enumerate}

Then, for each triangle $T \in \calB$, we detect the intersection of the isoline $\left\{ \phi = 0 \right\}$ with the edges of $T$ by linear interpolation of the values of $\phi$ at its vertices. 
A pattern is used to split the triangles sharing at least one of the identified edges by this process in such a way that $\Gamma$ appears explicitly in the resulting mesh. This operation concerns not only the triangles of $\calB$, but also the triangles that are adjacent to those in $\calB$, whose splitting is needed to ensure conformity of the resulting mesh in spite of the fact that they do not intersect the curve $\Gamma$.

This stage results in a mesh $\calTt$ of $D$, which is valid, conforming, and contains an explicit discretization of $\Gamma$ as a collection $\calLt$ of edges.
Unfortunately, $\calTt$ is bound to be ill-shaped -- i.e. to contain very flat, nearly degenerate elements -- since the relative positions of $\Gamma$ and the vertices of the elements of $\calB$ are arbitrary, see \cref{fig.remesh} (b).

\subsubsection{Quality-oriented remeshing}

\noindent In this second stage, we repeatedly apply four local mesh modification operators towards improving the quality of the elements of $\calTt$, while ensuring a fine representation of the geometry of $\Gamma$:
\begin{itemize}
\item \textit{Edge split:} A ``long'' edge in the mesh is split into two, after addition of a new vertex in the mesh. All the triangles sharing this edge are split accordingly.
\item \textit{Edge collapse:} One of the two endpoints of a ``short'' edge is merged with the other, and the vertices of the attached triangles are updated accordingly. 
\item \textit{Edge swap:} An edge shared by two triangles is flipped so as to connect the other two vertices of the configuration.
\item \textit{Vertex relocation:} A vertex is moved, while all the connectivities in the mesh remain untouched. 
\end{itemize}
We refer to classical textbooks about meshing such as \cite{borouchaki2017meshing,frey2007mesh} for a more detailed presentation of these operations.

This stage yields the desired high-quality mesh $\widetilde{\calT}$ of $D$, where a sub-collection $\widetilde{\calL}$ of edges explicitly accounts for $\Gamma$, see \cref{fig.remesh} (c).
Note that this quality-oriented remeshing procedure allows to adapt the size of the elements of $\widetilde{\calT}$ to a user-defined size prescription, so as to improve the accuracy of geometric or mechanical computations.


\section{Applications examples in physical simulations}\label{sec.appphys}


\noindent This section presents two ``simple'' illustrations of our numerical methodology for tracking the motion of an open curve $\Gamma$ in 2d. 
All the computations are conducted on a standard \texttt{Apple MacBookPro}
laptop with a 2 GHz Quad-Core Intel Core i5  processor and 16 GB of memory.
The first \cref{sec.numvor} aims to evaluate its efficiency on the academic problem where $\Gamma$ evolves under an analytical velocity field. 
\cref{sec.vortex} then deals with an application in fluid mechanics: $\Gamma$ represents a two-dimensional vortex sheet
whose velocity is given by a weakly singular curve integral. 

\subsection{Evolution of an open curve under an analytical velocity field}\label{sec.numvor}

\noindent This first example aims to appraise the accuracy of our numerical algorithm. It draws inspiration from a classical benchmark test-case proposed in \cite{leveque1996high} to compare the accuracy of numerical methods for the resolution of the advection equation. 

The situation takes places in the 2d unit square $D = (0,1)^2$. The considered curve $\Gamma(t)$ evolves through the time period $(0,T)$, with $T=1.2$, starting from the initial configuration
$$\Gamma(0) := \Big\{(0.1+0.8s,0.8) ,\:\: s \in (0,1) \Big\} \cup \Big\{(0.4+0.2s,0.3+0.15s) ,\:\: s\in (0,1)\Big\},$$ 
represented on \cref{fig.vor} (a), according to the time-dependent velocity field $V(t,x)$ defined by: 
$$\forall t \in (0,T) ,\:\: x= (x_1,x_2) \in D, \quad  V(t,x)= \left(
\begin{array}{c}
\sin(4\pi(x_1+0.5))  \sin(4\pi(x_2-0.3))  \cos(\pi\frac{t}{T}) \\
\cos(4\pi(x_1+0.5)) \cos(4\pi(x_2-0.3)) \cos(\pi\frac{t}{T}) 
\end{array}
\right).$$
The latter induces a large deformation of $\Gamma(t)$ until the time $t=T/2$; its antisymmetry with respect to $t=T/2$ then causes the curve to return to its initial configuration at time $T$. 

We apply our numerical \cref{algo.openls} to track this motion, choosing a subdivision of $(0,T)$ with $N= 200$ intermediate times.
The remeshing algorithm is required to create elements with minimum and maximum sizes $0.006$ and $0.009$, respectively, and the largest mesh produced in the course of the evolution (at $t=T/2$) has $128,729$ vertices. The total computational time equals approximately $40$ min.

\begin{figure}[!ht]
    \centering
    \begin{tabular}{ccc}
\begin{minipage}{0.3\textwidth}
\centering
\begin{overpic}[width=1.0\textwidth]{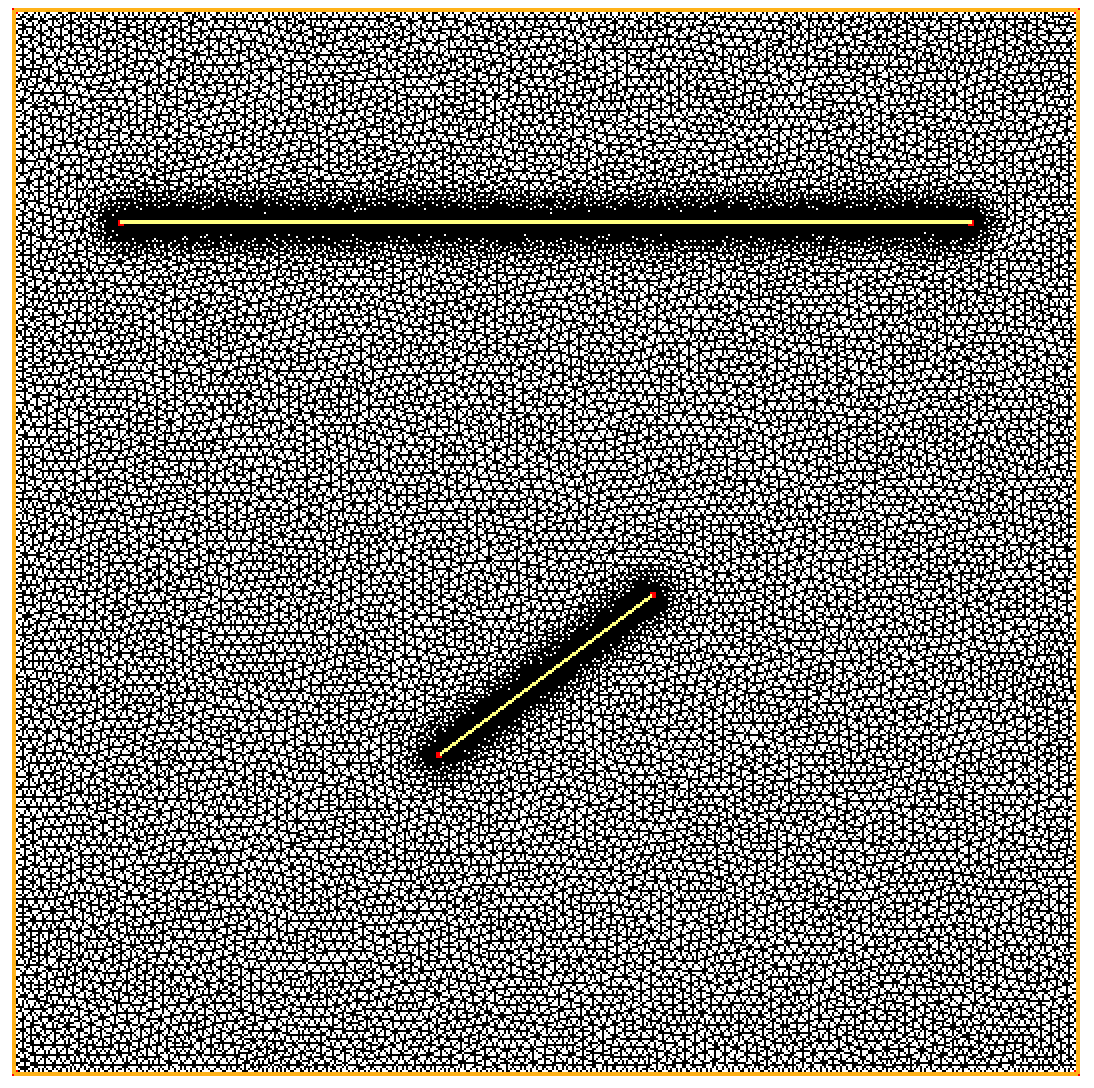}
\put(0,-3){\fcolorbox{black}{white}{$t=0$}}
\end{overpic}
\end{minipage} & 
\begin{minipage}{0.3\textwidth}
\centering
\begin{overpic}[width=1.0\textwidth]{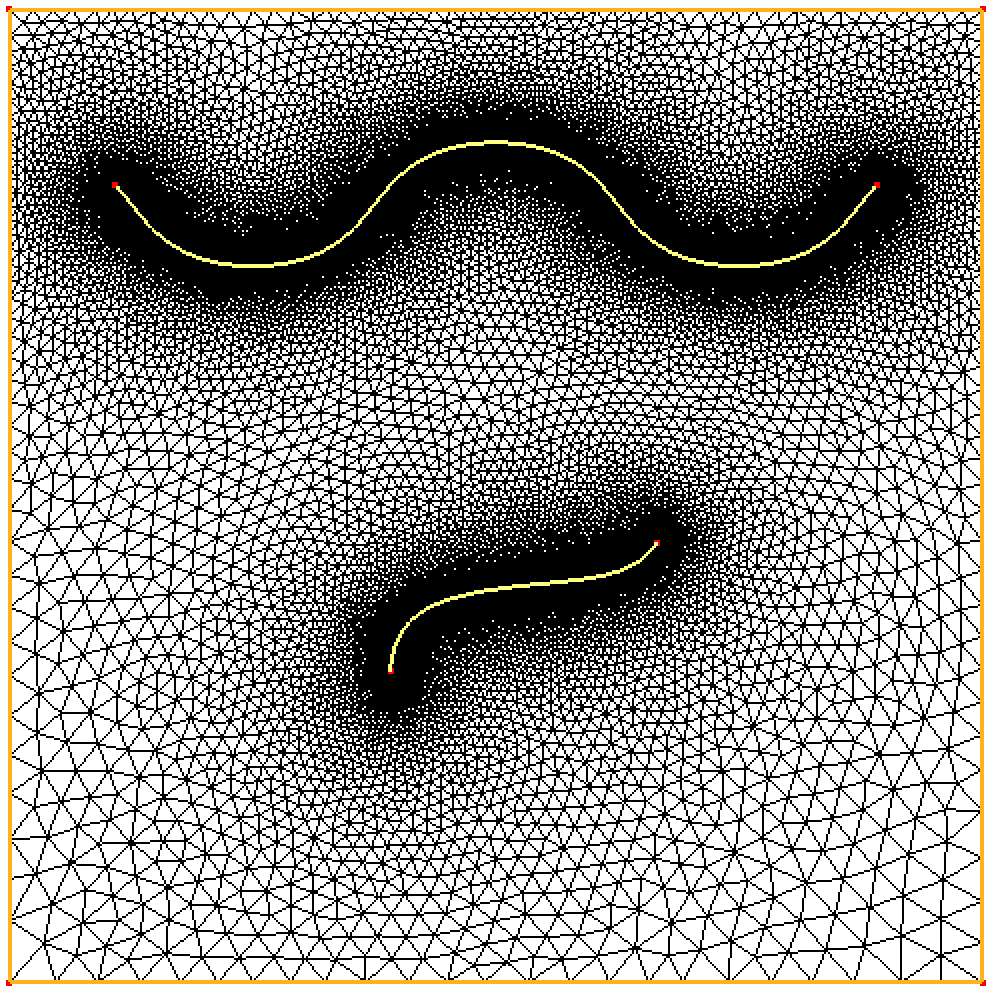}
\put(0,-3){\fcolorbox{black}{white}{$t=0.07$}}
\end{overpic}
\end{minipage}  
&
\begin{minipage}{0.3\textwidth}
\centering
\begin{overpic}[width=1.0\textwidth]{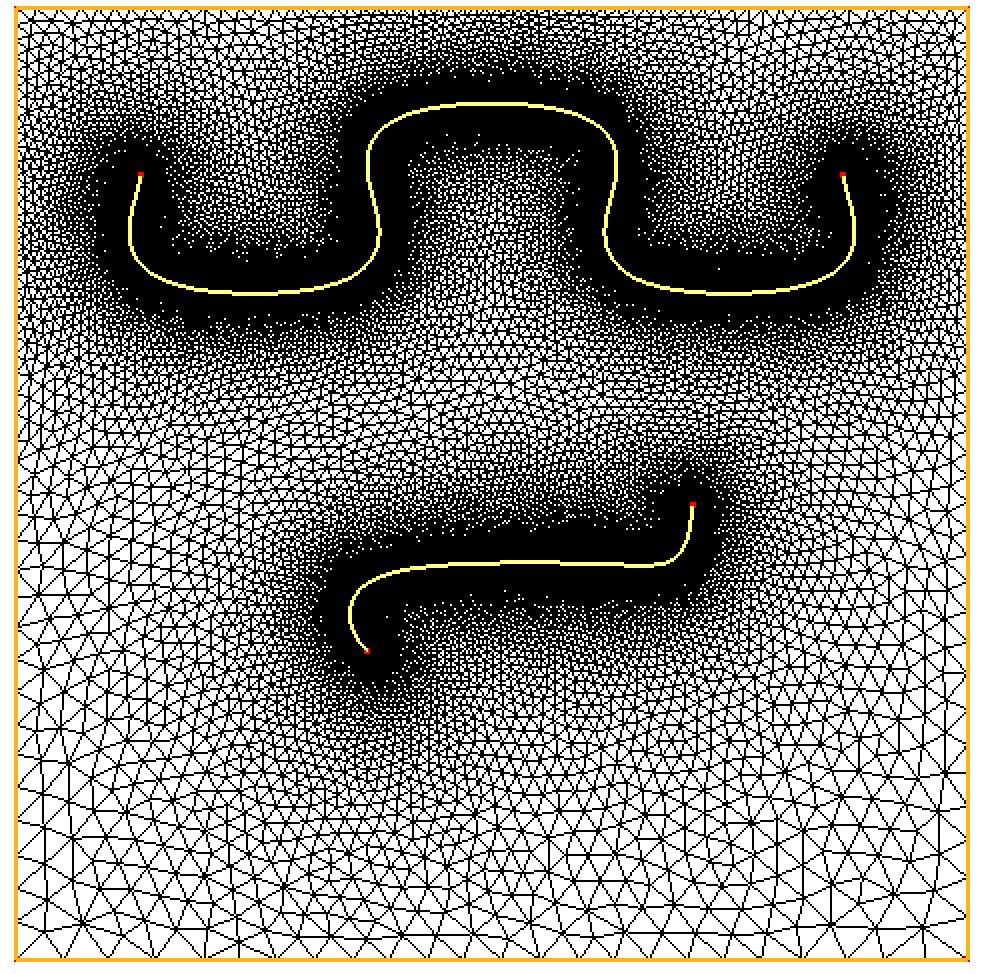}
\put(0,-3){\fcolorbox{black}{white}{$t=0.15$}}
\end{overpic}
\end{minipage}
\end{tabular} \par\bigskip
 \begin{tabular}{ccc}
 \centering
\begin{minipage}{0.3\textwidth}
\centering
\begin{overpic}[width=1.0\textwidth]{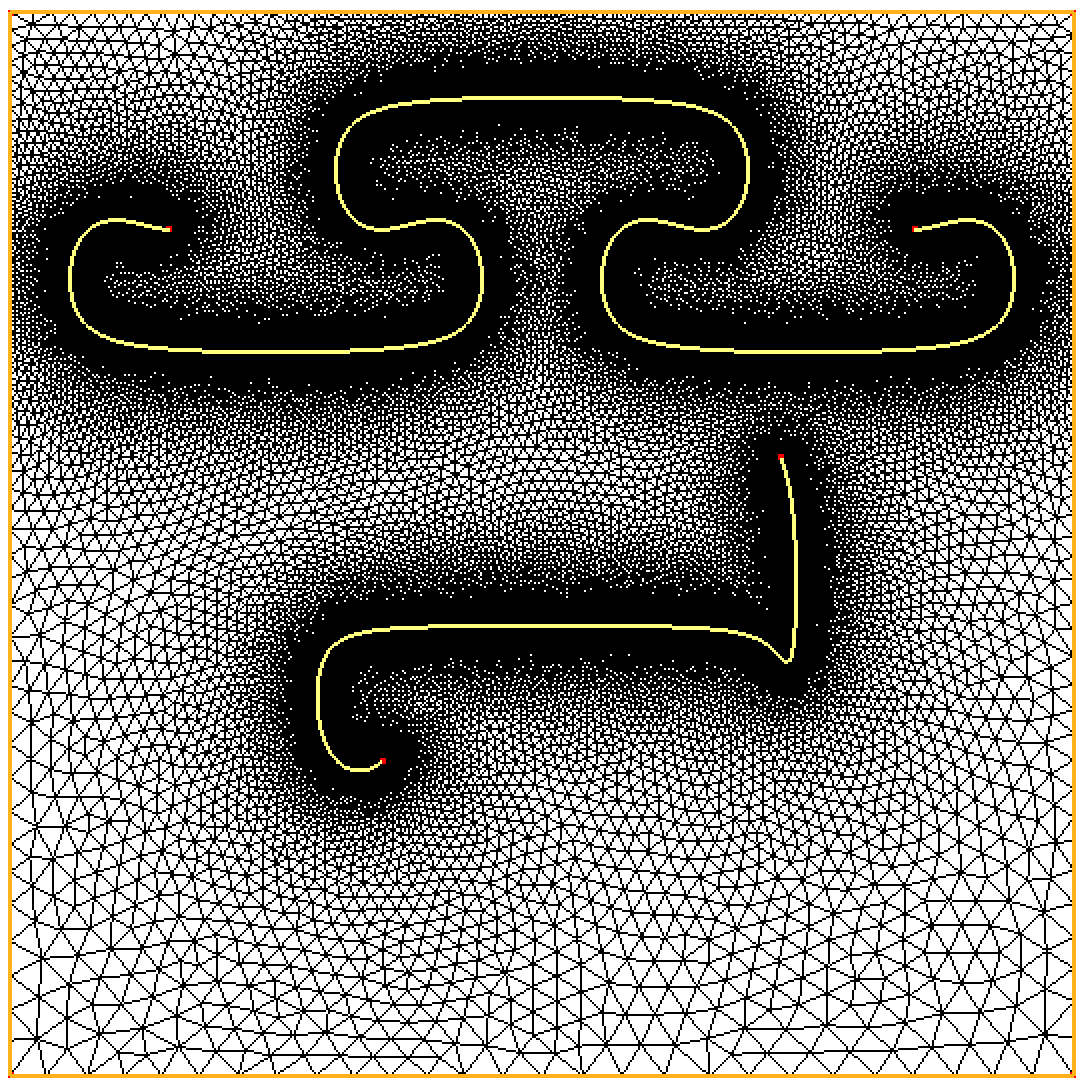}
\put(0,-3){\fcolorbox{black}{white}{$t=0.3$}}
\end{overpic}
\end{minipage} 
&
\begin{minipage}{0.3\textwidth}
\centering
\begin{overpic}[width=1.0\textwidth]{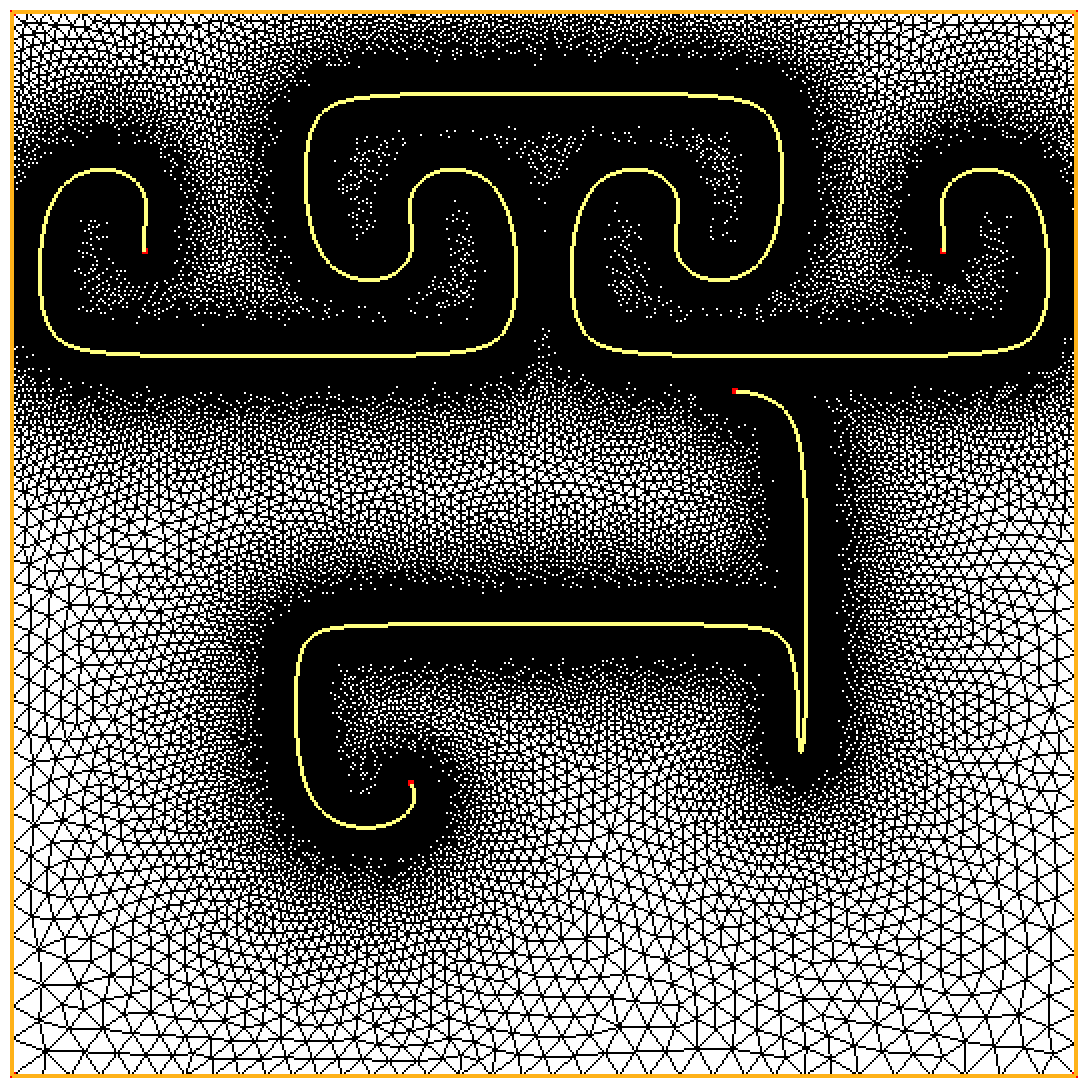}
\put(0,-3){\fcolorbox{black}{white}{$t=0.6$}}
\end{overpic}
\end{minipage} & 
\begin{minipage}{0.3\textwidth}
\centering
\begin{overpic}[width=1.0\textwidth]{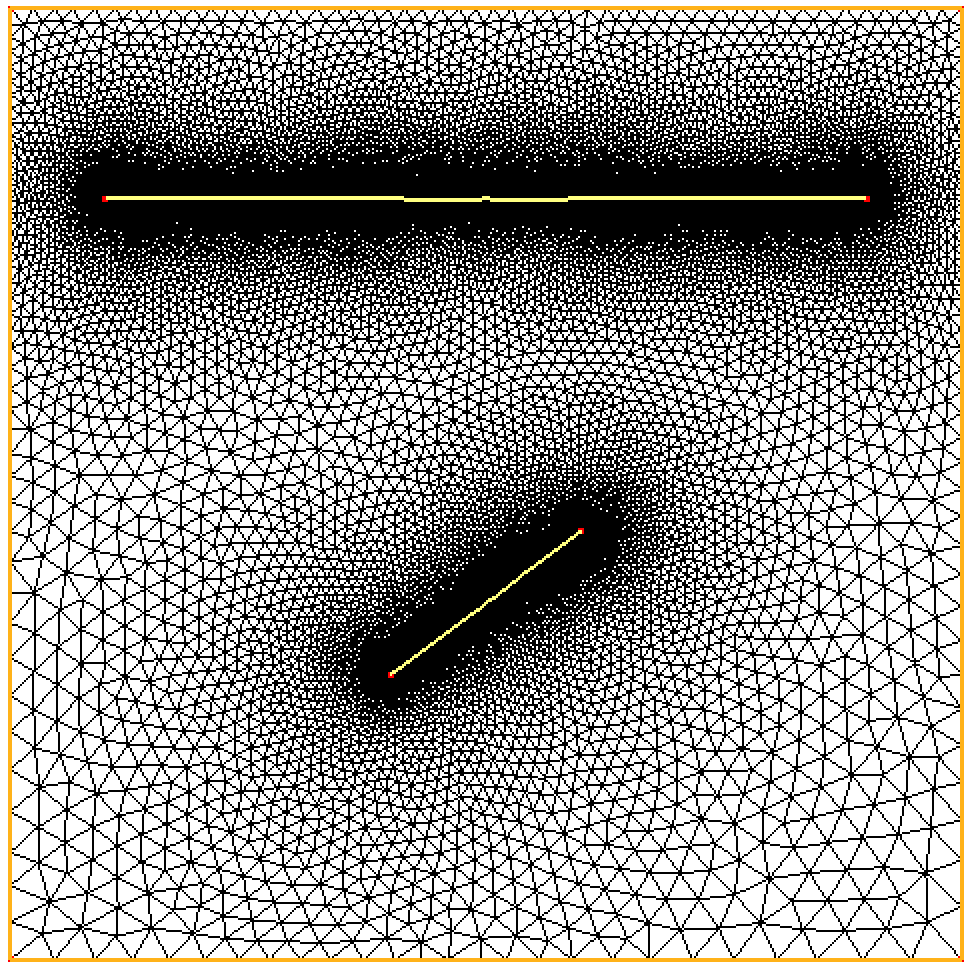}
\put(0,-3){\fcolorbox{black}{white}{$t=1.2$}}
\end{overpic}
\end{minipage} 
\end{tabular}  
 \caption{\it A few snapshots of the curve $\Gamma(t)$ evolving under the analytical velocity field considered in \cref{sec.numvor}.}
  \label{fig.vor}
\end{figure}

Since the considered motion satisfies $\Gamma(0)= \Gamma(T)$ at the continuous level, 
the accuracy of this numerical simulation can be measured in terms of the Hausdorff distance $d^H(\Gamma(0), \Gamma(T))$ between the initial and final configurations of $\Gamma(t)$: 
$$d^H(\Gamma(0), \Gamma(T)) = \max \Big( \rho(\Gamma(0),\Gamma(T)),  \rho(\Gamma(T),\Gamma(0)) \Big), \text{ where } \rho(K_1, K_2) := \sup\limits_{x \in K_1} d(x,K_2). $$
In the above situation, this error equals $d^H(\Gamma(0), \Gamma(T)) =1.034$e$^{-3}$, which is a much lower value than the minimum size of an edge in the mesh, in spite of the extreme stretching undergone by $\Gamma$ in the course of the evolution, 
which validates the accuracy of our method.

\subsection{Vortex sheet roll-up}\label{sec.vortex}

\noindent In this section, we apply our numerical tracking strategy to the simulation of the dynamics of a vortex sheet in a nearly inviscid 2d fluid; 
the velocity of such a one-dimensional structure is calculated as a line integral on the evolving curve.
The vortex sheet phenomenon has been the subject of extensive investigations: without any claim for exhaustivity, we refer to \cite{saffman1992vortex} and Chap. 6 of \cite{marchioro2012mathematical} about its physical modelling, to \cite{cottet2001vortex,majda2002vorticity,saffman1992vortex} about its mathematical analysis, and to \cite{krasny1986study,krasny1987computation} for numerical simulations by Lagrangian methods; see also \cite{harabetian1996eulerian} for Eulerian simulations of closed vortex sheets via the ``classical'' Level Set Method. 

The phenomenon under scrutiny originally takes place in a 3d medium filled with an incompressible and nearly inviscid fluid. A vortex sheet is a surface along which the velocity ``slips'', having discontinuous tangential component and continuous normal component. Such a pattern typically shows up in the wake of an aircraft, or at the limit between two immiscible fluids with different velocities, see \cref{fig.vorset} (a) for an illustration.
In particular configurations, taking advantage of symmetries allows to reduce this situation to that of an open curve $\Gamma(t)$ evolving within a 2d fluid medium $D \subset \R^2$, as we now consider.

For completeness, a few details about the 2d physical model are sketched in \cref{app.vortex}. For the purpose of this section, let us solely point out that
the dynamics of the fluid and of the discontinuity line $\Gamma(t)$ are governed by the vortex strength $\gamma(t,x)$, 
a scalar quantity defined for $x\in \Gamma(t)$ which is directly related to the jump in tangential velocity of the fluid. Precisely, the velocity $u(t,x)$ of the fluid surrounding $\Gamma(t)$ is given by the following formula: 
\begin{equation}\label{eq.velvor}
u(t,x) = \frac{1}{2\pi}\int_{\Gamma(t)} \gamma(t,y) \frac{(x-y)^\perp}{\lvert x - y \lvert^2}\:\d s(y), \quad t > 0, \:\: x \in D \setminus \overline{\Gamma(t)}. 
\end{equation}
Here, $v^\perp = (-v_2,v_1)$ stands for the $90^{\degree}$ counterclockwise rotate of a vector $v = (v_1,v_2)$. As throughout the article, we denote by $\d s$ the integration measure on a codimension $1$ subset of the plane $\R^2$ (i.e. a curve), out of consistency with the general case of a $d$-dimensional ambient space.

The velocity field driving the evolution of $\Gamma(t)$ itself has the same expression \cref{eq.velvor}, and with a small abuse of notation, it is also denoted by $u(t,x)$. In this latter case, however, 
the integrand in \cref{eq.velvor} is not absolutely integrable, and the formula is understood as a Cauchy principal value.
As is customary in the literature, the numerical evaluation of \cref{eq.velvor} relies on the so-called ``vortex-blob'' method, which alleviates the singularity of the kernel thanks to the following smooth approximation:
$$u(t,x) \approx \frac{1}{2\pi} \int_{\Gamma(t)} \gamma(t,y) \frac{(x-y)^\perp}{\lvert x - y \lvert^2 + \delta^2}\:\d s(y),$$
where $\delta>0$ is a small parameter, see \cite{krasny1986study}.

The description of the evolution of $\Gamma(t)$ is completed by an equation about the vortex strength $\gamma(t,x)$. 
The latter stems from the conservation of circulation within each small portion of $\Gamma(t)$ as it is driven by the fluid, which reads:
$$\lvert \com(\nabla X(t,0,x)) n \lvert \gamma(t,X(t,0,x)) = \gamma(0,x), \quad x \in \Gamma(0), $$
where $t \mapsto X_0(t,x)$ is the characteristic curve of $u(t,x)$ emerging from a point $x \in \Gamma(0)$, see \cref{eq.phicharac}. 

We apply the methodology of \cref{sec.algo} to track the motion of a vortex sheet, in a particular physical configuration considered in \cite{krasny1987computation}.  
The initial shape of the vortex sheet is the straight line segment $\Gamma(0) = (-1,1) \times \left\{0\right\}$, and the initial vortex strength is weakly singular at its endpoints: 
\begin{equation}\label{eq.inivorstren}
\gamma(0,x) = -\frac{x_1}{(1-x_1^2)^{1/2}}, \quad x= (x_1,x_2) \in \Gamma(0).
\end{equation}
The final time of the simulation is $T=2.2$, and we use the time step $\Delta t= 0.02$. The vortex-blob parameter is set to $\delta =0.05$.
A few snapshots of the evolution process are represented on \cref{fig.vorres}, which show good agreement with the results of \cite{krasny1987computation}. The maximum number of vertices in a mesh equals $81,272$, and the total computation takes about $90$ min.

 \begin{figure}[!ht]
\centering
    \begin{tabular}{cc}
\begin{minipage}{0.51\textwidth}
\centering
\begin{overpic}[width=1.0\textwidth]{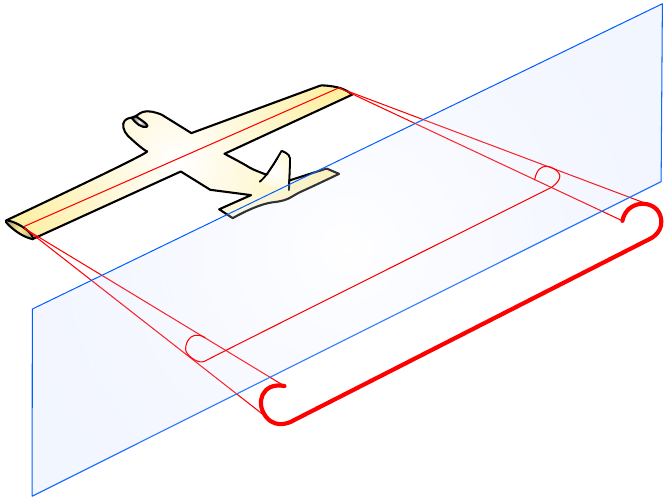}
\put(0,-3){\fcolorbox{black}{white}{a}}
\end{overpic}
\end{minipage} & 
\begin{minipage}{0.49\textwidth}
\centering
\begin{overpic}[width=1.0\textwidth]{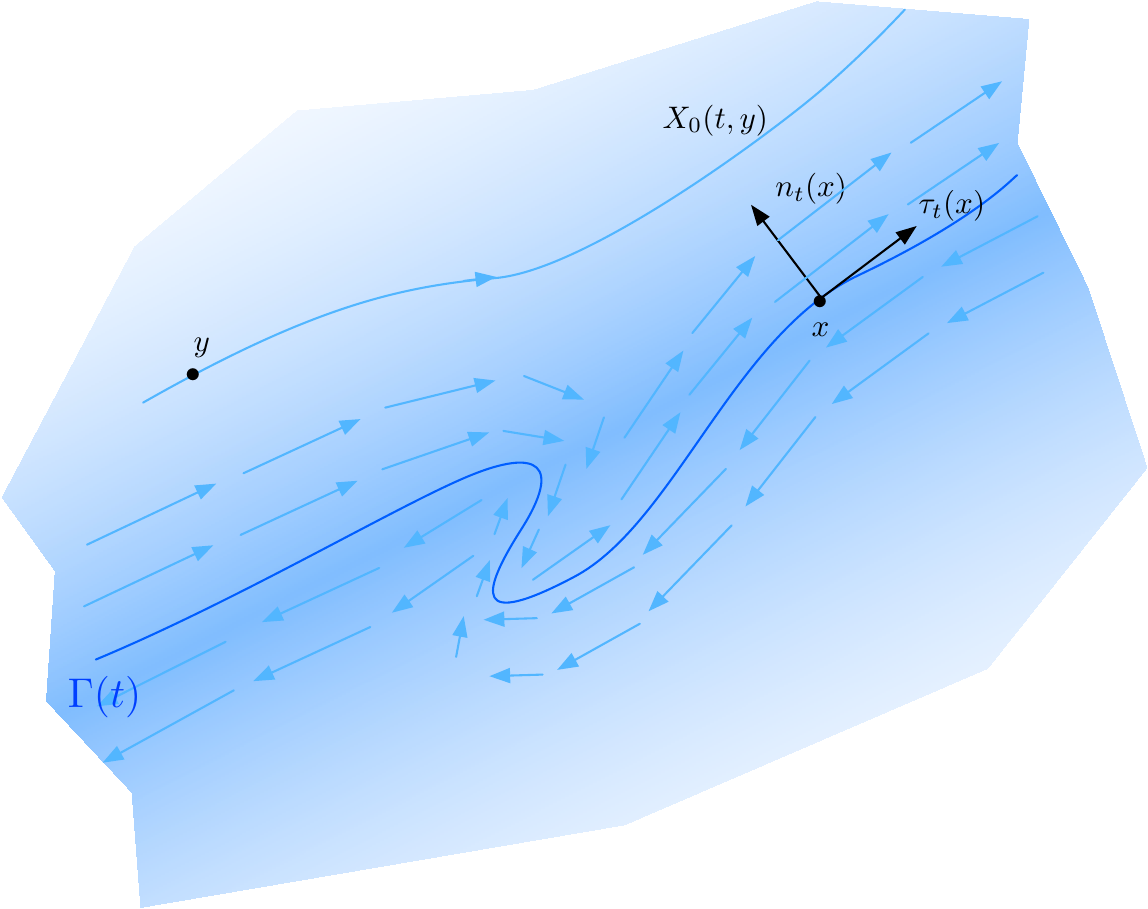}
\put(0,-3){\fcolorbox{black}{white}{b}}
\end{overpic}
\end{minipage}  
\end{tabular}  
\caption{\it (a) A 3d vortex sheet developing in the wake of an aircraft and its 2d sections; (b) Mathematical quantities associated to the description of a vortex sheet, as discussed in \cref{sec.vortex}.}
\label{fig.vorset} 
\end{figure}

\begin{figure}[!ht]
    \centering
    \begin{tabular}{cc}
\begin{minipage}{0.48\textwidth}
\centering
\begin{overpic}[width=1.0\textwidth]{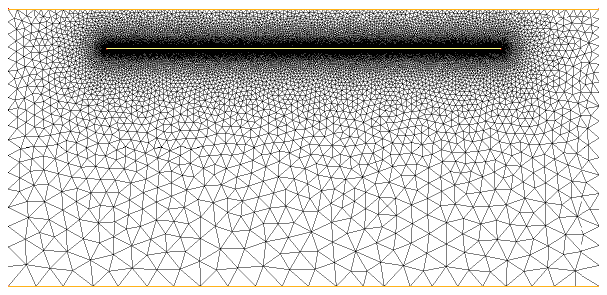}
\put(0,-3){\fcolorbox{black}{white}{$t=0$}}
\end{overpic}
\end{minipage} & 
\begin{minipage}{0.48\textwidth}
\centering
\begin{overpic}[width=1.0\textwidth]{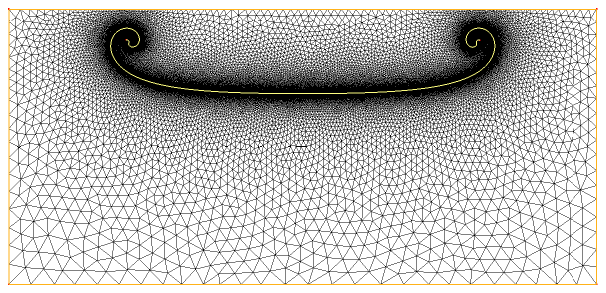}
\put(0,-3){\fcolorbox{black}{white}{$t=0.5$}}
\end{overpic}
\end{minipage}  
\end{tabular}  \par\bigskip 
    \begin{tabular}{cc}
\begin{minipage}{0.48\textwidth}
\centering
\begin{overpic}[width=1.0\textwidth]{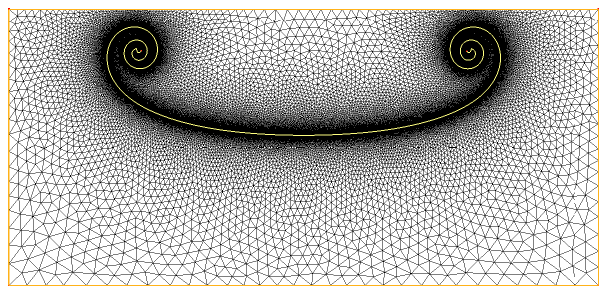}
\put(0,-3){\fcolorbox{black}{white}{$t=1$}}
\end{overpic}
\end{minipage} & 
\begin{minipage}{0.48\textwidth}
\centering
\begin{overpic}[width=1.0\textwidth]{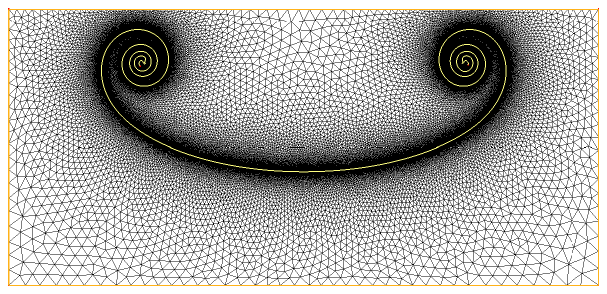}
\put(0,-3){\fcolorbox{black}{white}{$t=1.5$}}
\end{overpic}
\end{minipage}  
\end{tabular}  \par\bigskip
    \begin{tabular}{cc}
\begin{minipage}{0.48\textwidth}
\centering
\begin{overpic}[width=1.0\textwidth]{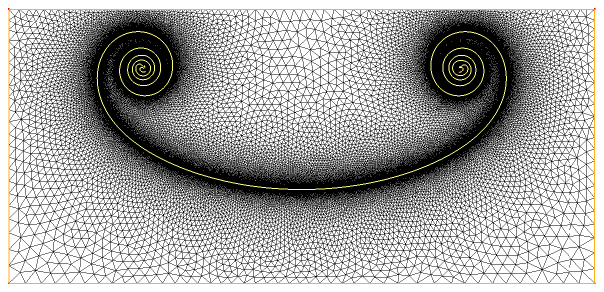}
\put(0,-3){\fcolorbox{black}{white}{$t=1.8$}}
\end{overpic}
\end{minipage} & 
\begin{minipage}{0.48\textwidth}
\centering
\begin{overpic}[width=1.0\textwidth]{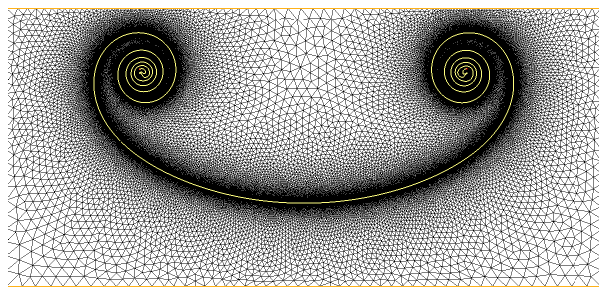}
\put(0,-3){\fcolorbox{black}{white}{$t=2$}}
\end{overpic}
\end{minipage}  
\end{tabular}  
\begin{center}
\begin{overpic}[width=1.0\textwidth]{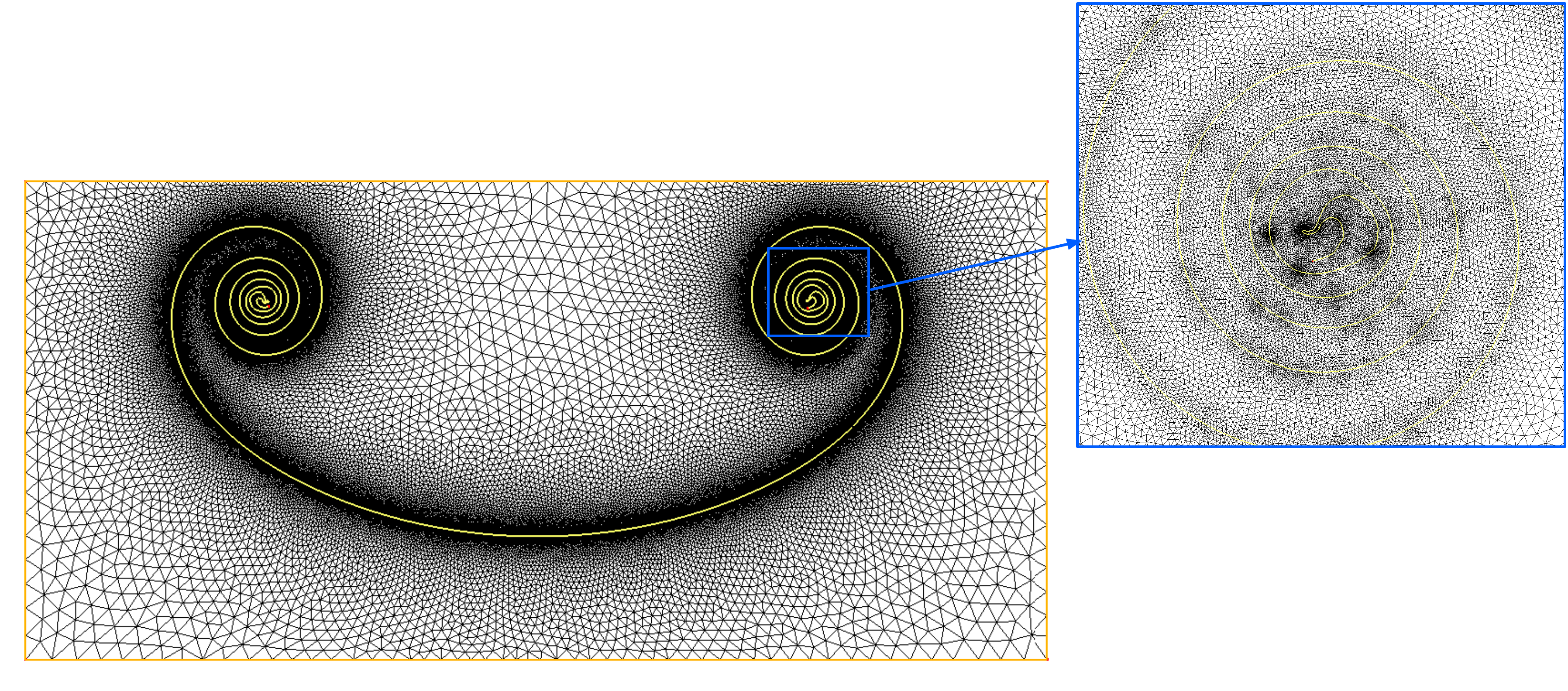}
\put(0,-3){\fcolorbox{black}{white}{$t=2.2$}}
\end{overpic}
\end{center}
 \caption{\it  A few intermediate configurations of the evolving vortex sheet considered in \cref{sec.vortex}.}
  \label{fig.vorres}
\end{figure}


\section{Applications in shape optimization}\label{sec.appso}


\noindent This section deals with the numerical resolution of optimization problems where the variable is the shape of an open curve $\Gamma$. 
These investigations raise the need for a little background about shape optimization, that we first present in \cref{sec.so}.
We then consider three different situations. In \cref{sec.aniper}, we optimize the length of an open curve with respect to a Riemannian metric of the plane; 
the next \cref{sec.am} deals with an application to the 3d printing of a structure: $\Gamma$ then represents the path of the laser used to raise the temperature of a target region of a metallic powder bed up to the fusion point. Finally, in \cref{sec.fault}, we use our methodology to solve an inverse problem of reconstruction of a fault in the underground from observational data.

\subsection{A primer about shape optimization}\label{sec.so}

\noindent The shape optimization problems under scrutiny are of the form
\begin{equation}\label{eq.sopb}
\min\limits_{\Gamma \subset D} \: J(\Gamma), 
\end{equation}
in which the variable $\Gamma$ is a 2d open curve (or a collection of such) in a fixed computational domain $D \subset \R^2$.
For simplicity, we omit constraints in this formulation, although their presence would not entail any additional conceptual difficulty, see e.g. \cite{feppon2020null} about this topic. 

The treatment of \cref{eq.sopb} calls for a notion of derivative for a function depending on an open curve,
and we rely on the boundary variation method of Hadamard \cite{allaire2020survey,allaire2007conception,henrot2018shape,murat1976controle,sokolowski1992introduction}.
In a nutshell, variations of a reference curve $\Gamma$ are considered of the form: 
$$\Gamma_\theta = (\Id + \theta) (\Gamma), \:\: \theta \in \calC^{1,\infty}(\R^2;\R^2), \:\: \lvert\lvert \theta \lvert\lvert_{\calC^{1,\infty}(\R^2;\R^2)} < 1, $$
i.e. each point $x$ of $\Gamma$ is perturbed according to a ``small'' vector field $\theta$ in the Banach space $\calC^{1,\infty}(\R^2;\R^2)$ of bounded vector fields with bounded derivatives. 
A function $J(\Gamma)$ is called shape differentiable at $\Gamma$ if the underlying mapping $\theta\mapsto J(\Gamma_\theta)$, from $\calC^{1,\infty}(\R^2;\R^2)$ into $\R$, is Fr\'echet differentiable at $\theta=0$. This gives rise to the following expansion: 
$$J(\Gamma_\theta) = J(\Gamma) + J^\prime(\Gamma)(\theta) + \o(\theta) , \text{ where } \frac{\o(\theta)}{\lvert\lvert \theta\lvert\lvert_{\Cinfty}} \xrightarrow{\theta\to 0} 0.$$
In practice, the knowledge of the shape derivative $J^\prime(\Gamma)(\theta)$ allows to identify a descent direction for the functional $J(\Gamma)$, that is, a vector field $\theta$ such that $J^\prime(\Gamma)(\theta) < 0$.
This property guarantees that for a small enough pseudo-time step $t>0$ the perturbed configuration $\Gamma_{t\theta}$ has better performance than $\Gamma$: 
$$J(\Gamma_{t\theta}) \:\: \approx \:\:  J(\Gamma) + t J^\prime(\Gamma)(\theta) \:\: <\:\: J(\Gamma).$$

The derivatives of the shape functionals considered in this article turn out to be of the form: 
\begin{equation}\label{eq.structJp}
J^\prime(\Gamma)(\theta) = \int_\Gamma v_\Gamma \theta\cdot n \:\d s + \int_\Sigma \Big( w_\Sigma \theta \cdot n_\Sigma  + z_\Sigma \theta \cdot n \Big) \:\d \ell,
\end{equation}
for some known scalar functions $v_\Gamma : \Gamma \to \R$ and $w_{\Sigma}, z_\Sigma: \Sigma \to \R$.
This structure reflects that only normal perturbations of the ``bulk'' of $\Gamma$ or perturbations of its endpoints may alter the value $J(\Gamma)$ at first order. 
A descent direction $\theta$ can be extracted from \cref{eq.structJp} by separating the treatments of its tangential and normal components:
\begin{itemize} 
\item A normal descent direction $\theta_n := -v_n n$ is obtained thanks to the so-called Hilbertian method, see \cite{allaire2020survey,azegami1996domain,burger2003framework,de2006velocity}: the scalar component $v_n$ is the solution to the variational problem
\begin{equation}\label{eq.Hilbert}
\text{Search for } v_n \in V \text{ s.t. } \forall w \in V, \quad a(v_n,w) =  \int_\Gamma v_\Gamma w \:\d s + \int_\Sigma  z_\Sigma  w \:\d \ell, 
\end{equation}
where $V$ is the Hilbert space
$$ V = \Big\{ v \in H^1(D) \text{ s.t. } v\lvert_\Gamma \in H^1(\Gamma) \Big\},$$
equipped with the following inner product: 
 $$ a(v,w) = \alpha^2 \int_D \nabla v \cdot \nabla w \:\d x + \int_D vw \:\d x  + \alpha^2 \int_\Gamma \nabla_\Gamma v \cdot \nabla_\Gamma w \:\d s.$$ 
This procedure is justified by the following simple calculation, which directly stems from \cref{eq.structJp,eq.Hilbert}: 
$$J^\prime(\Gamma)(\theta_n) = -\int_\Gamma v_\Gamma v_n  \:\d s - \int_\Sigma  z_\Sigma v_n \:\d \ell  = -a(v_n,v_n) < 0.$$
 \item A descent direction $\theta_\Sigma$ which is tangential to $\Gamma$ is simply given by: 
 $$ \theta_\Sigma = - w_\Sigma n_\Sigma.$$ 
 \end{itemize}
Combining both observations, the desired descent direction for $J(\Gamma)$ is then:
$$ \theta = -v_n n - w_\Sigma n_\Sigma.$$

Note that this separate search for the normal and tangential components of the descent direction $\theta$ agrees with the representation of  $\Gamma$ by two level set functions $\phi$ and $\psi$ described in \cref{sec.2LSM}.
Indeed, the normal vector field $\theta_n$ drives the motion of $\phi$ while leaving $\psi$ unaltered; on the contrary, advection by the tangential field $\theta_\Sigma$ does not modify $\phi$.

\subsection{Optimization of anisotropic perimeter functionals}\label{sec.aniper}

\noindent This section deals with the minimization of the Riemannian length of a path $\Gamma$ in the plane. 
More precisely, we consider the instance of \cref{eq.sopb}, where the objective function is an anisotropic perimeter functional of the form:
\begin{equation}\label{eq.anisoper}
J(\Gamma) = \int_\Gamma \varphi(x,n(x)) \:\d s(x), 
\end{equation}
made from a given smooth function $\varphi :\R^2_x \times \R^2_n \to \R$. 
The calculation of the shape derivative of such a functional is detailed in \cref{app.deranisoper}.

Despite the academic appearance, avatars of this problem show up in multiple applications, such as trajectory optimization \cite{kelly2017introduction}, car path planning \cite{ziegler2014trajectory} or models for protein folding \cite{lavalle2006planning}. 
Drawing inspiration from \cite{precioso2024hybrid}, we consider an instance of the so-called Zermelo's navigation problem \cite{zermelo1931navigationsproblem}: a traveler is strolling through a landscape whose topography is defined by a height function $h:D \to \R$ over the 2d unit square $D$, which is moreover subjected to wind conditions accounted for by the velocity field $w : D \to \R^2$. 
In this situation, we aim to optimize the trajectory of the traveller, which is represented by an open curve $\Gamma$ in the domain $D$, with respect to the length of its image on the graph of $h$, while taking into account the effort of moving against wind. More precisely, the optimization problem at stake is thus of the form \cref{eq.anisoper}, with $\varphi$ given by:
$$ \varphi(x,n) = - n \cdot w^\perp(x) + \sqrt{\calM(x)n \cdot n},$$
where $w^\perp = (-u_2,u_1)$ is the $90^{\degree}$ counterclockwise rotate of $w$, and 
$$ \calM(x) = \left(\begin{array}{cc}
1+ \left( \frac{\partial h}{\partial x_2}(x)\right)^2 & - \frac{\partial h}{\partial x_1} (x) \frac{\partial h}{\partial x_2 }(x) \\ 
  -\frac{\partial h}{\partial x_1}(x) \frac{\partial h}{\partial x_2 }(x) & 1+ \left( \frac{\partial h}{\partial x_1} (x)\right)^2 
\end{array} \right).$$
The first term in the expression of $\varphi(x,n)$ is minimal when the tangent vector to $\Gamma$ is aligned with the direction of wind (i.e. $n\cdot w^\perp$ is maximal) while the second term is the first fundamental form of the landscape expressed in terms of the normal vector to $\Gamma$. 

We consider two different physical situations. In a first example, depicted on \cref{fig.path4pks}, the landscape is made of four peaks and no wind is blowing ($w=0$). Using a straight line as initial guess, we optimize the path $\Gamma$ between two fixed endpoints by solving \cref{eq.sopb,eq.anisoper}.
Starting from a straight line, we apply the methodology of \cref{sec.algo} to this problem, and the outcome is depicted on \cref{fig.path4pks}. The convergence history, reported in \cref{fig.pathgraph} (a), shows that the objective function is smoothly minimized until a local minimum of the problem is attained.  

\begin{figure}[!ht]
    \centering
    \begin{tabular}{c}
\begin{minipage}{0.8\textwidth}
\centering
\begin{overpic}[width=1.0\textwidth]{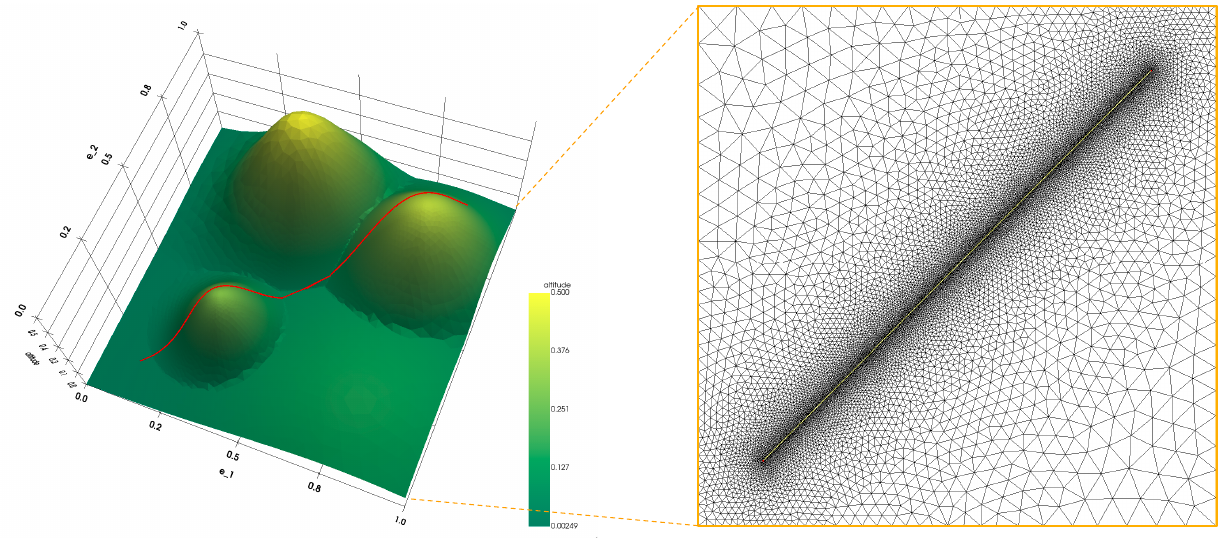}
\put(0,-3){\fcolorbox{black}{white}{a}}
\end{overpic}
\end{minipage} 
\end{tabular}\par\bigskip
    \begin{tabular}{c}
\begin{minipage}{0.8\textwidth}
\centering
\begin{overpic}[width=1.0\textwidth]{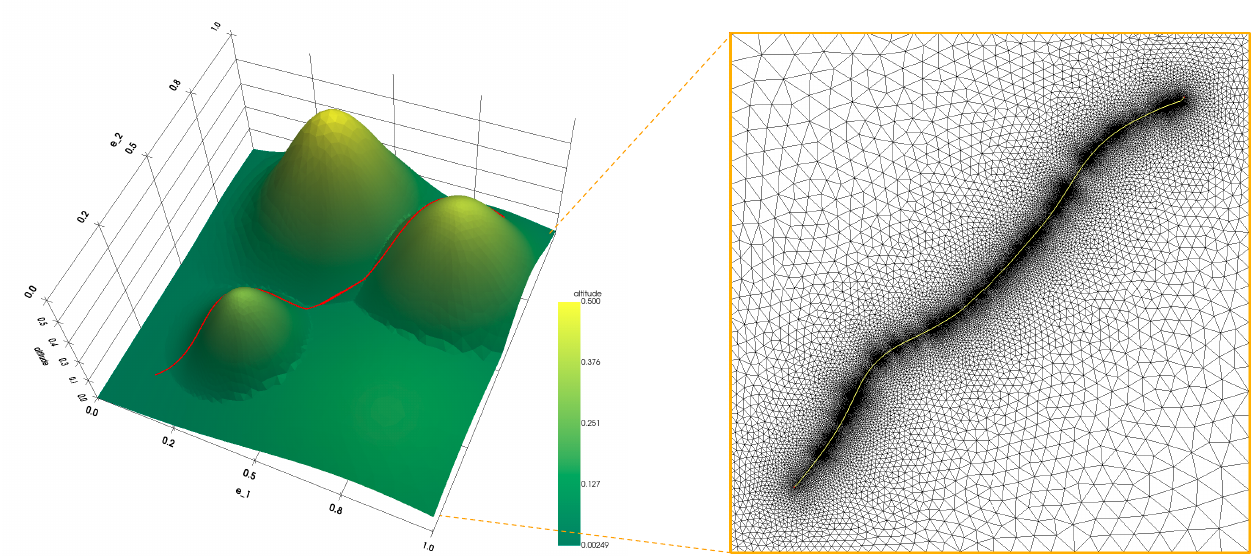}
\put(0,-3){\fcolorbox{black}{white}{b}}
\end{overpic}
\end{minipage}  
\end{tabular}  
\par\bigskip
    \begin{tabular}{c}
\begin{minipage}{0.8\textwidth}
\centering
\begin{overpic}[width=1.0\textwidth]{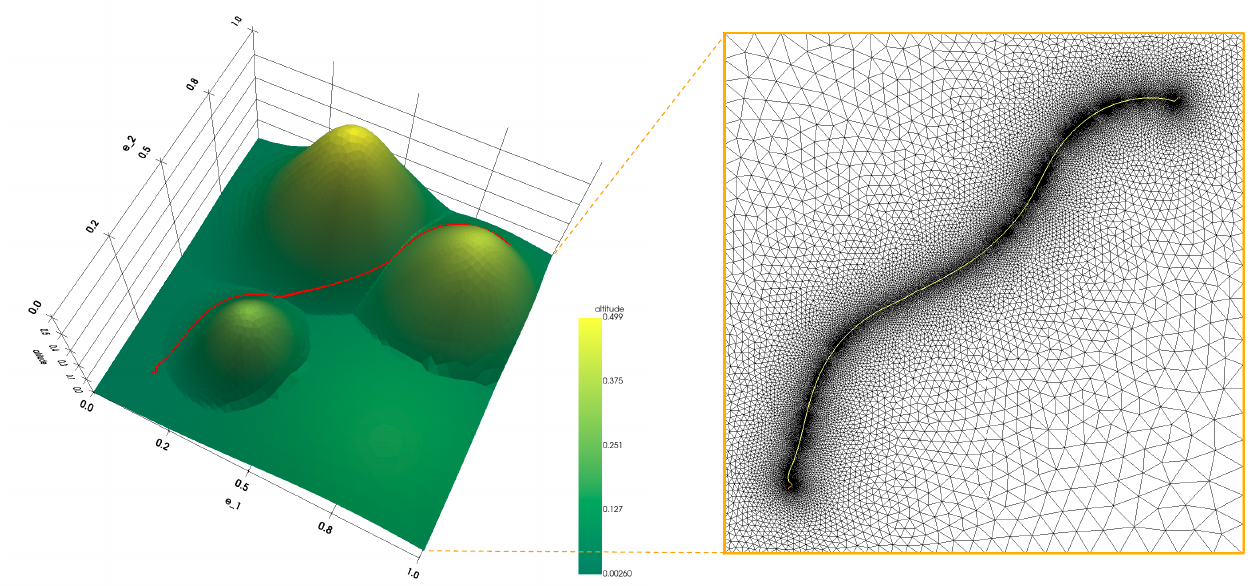}
\put(0,-3){\fcolorbox{black}{white}{c}}
\end{overpic}
\end{minipage}  
\end{tabular}  
 \caption{\it Optimization of the path between two points on a landscape featuring 4 peaks, without wind, as considered in \cref{sec.aniper}; (a) Initial guess for the path; (b) Intermediate shape ($n=6$); (c) Optimized path ($n=30$).}
  \label{fig.path4pks}
\end{figure}

In a second experiment, we consider the optimization of the path $\Gamma$ through a landscape made of three peaks, in the presence of a wind corridor blowing from North-East to South-West:
$$ w(x) = w_{\text{max}} e^{-\frac{d(x)^2}{\sigma_w^2}} \tau_w , \text{ where } d(x) = \Big(x-(0.1,0)\Big)\cdot n_w \text{ is the distance to the corridor},$$ $w_{\text{max}} = 2$ is the maximum intensity of wind, $\sigma_w = 0.15$, $\tau_w = (-0.9923,-0.1240)$ is the direction of wind and $n_w = \tau_w^\perp$ is the orthogonal direction.
 The results are depicted on \cref{fig.fig.path3pks}, see \cref{fig.pathgraph} (b) for the convergence history of the computation. Here, the path takes a detour, occasionally going uphill, in order to avoid moving against the wind.

\begin{figure}[!ht]
    \centering
    \begin{tabular}{c}
\begin{minipage}{0.9\textwidth}
\centering
\begin{overpic}[width=1.0\textwidth]{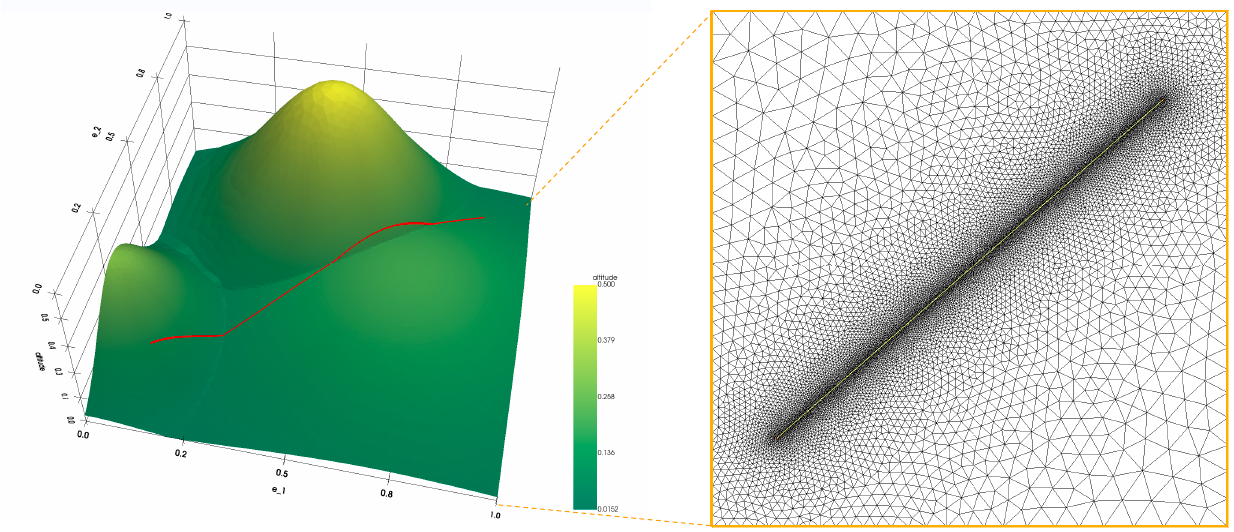}
\put(0,-3){\fcolorbox{black}{white}{a}}
\end{overpic}
\end{minipage} 
\end{tabular}\par\bigskip
    \begin{tabular}{c}
\begin{minipage}{0.9\textwidth}
\centering
\begin{overpic}[width=1.0\textwidth]{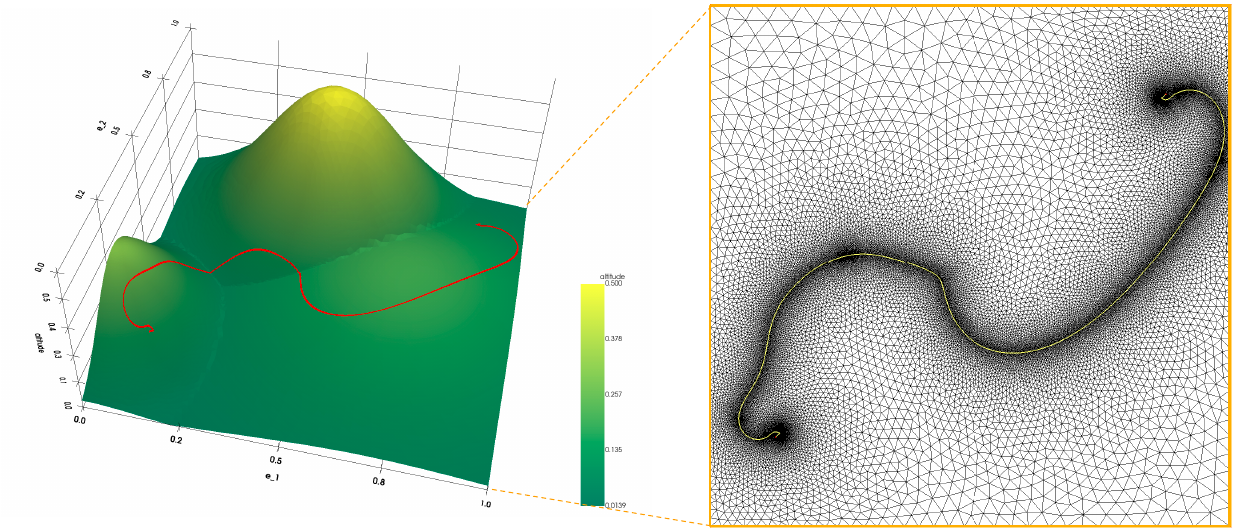}
\put(0,-3){\fcolorbox{black}{white}{b}}
\end{overpic}
\end{minipage}  
\end{tabular}  
 \caption{\it Optimization of the path between two points on a landscape featuring 3 peaks and wind, as considered in \cref{sec.aniper}; (a) Initial guess for the path; (b) Optimized path ($n=30$).}
  \label{fig.fig.path3pks}
\end{figure}

\begin{figure}[!ht]
    \centering
 \begin{tabular}{cc}
\begin{minipage}{0.45\textwidth}
\centering
\begin{overpic}[width=1.0\textwidth]{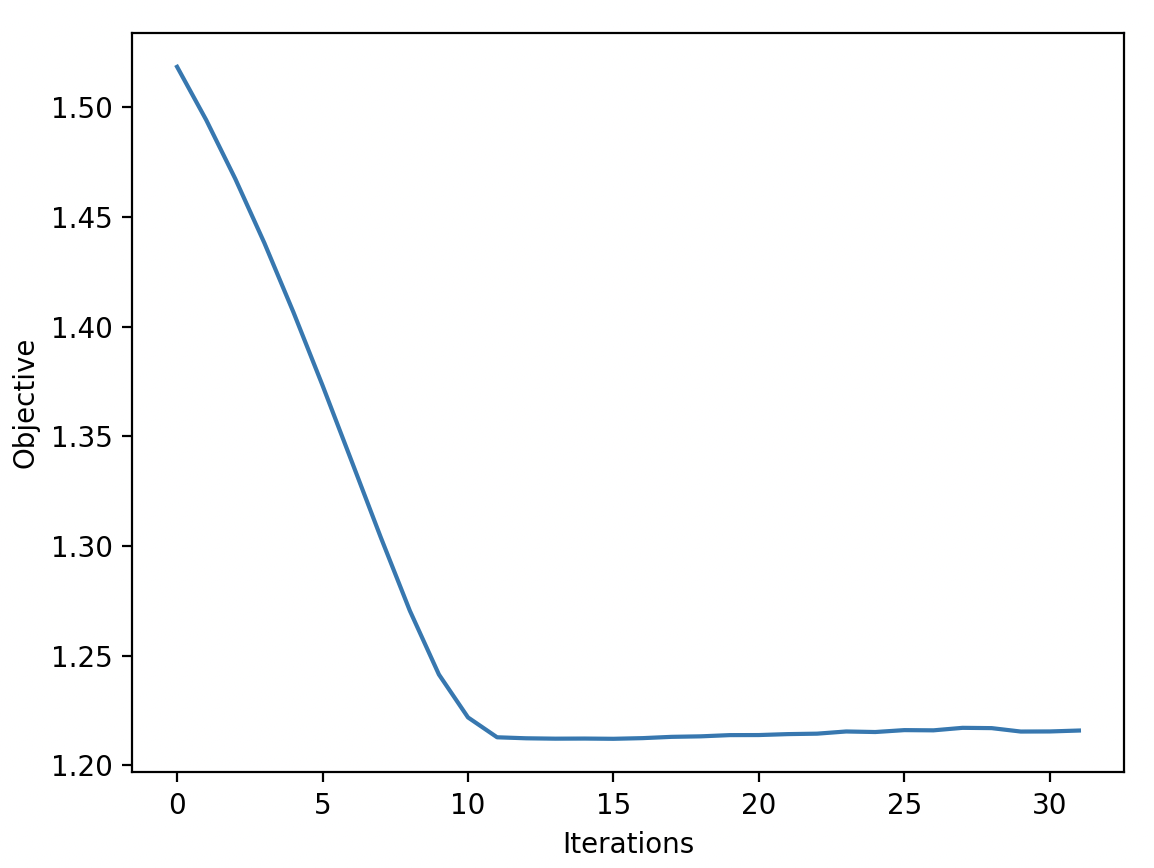}
\put(0,-3){\fcolorbox{black}{white}{$a$}}
\end{overpic}
\end{minipage} & 
\begin{minipage}{0.45\textwidth}
\centering
\begin{overpic}[width=1.0\textwidth]{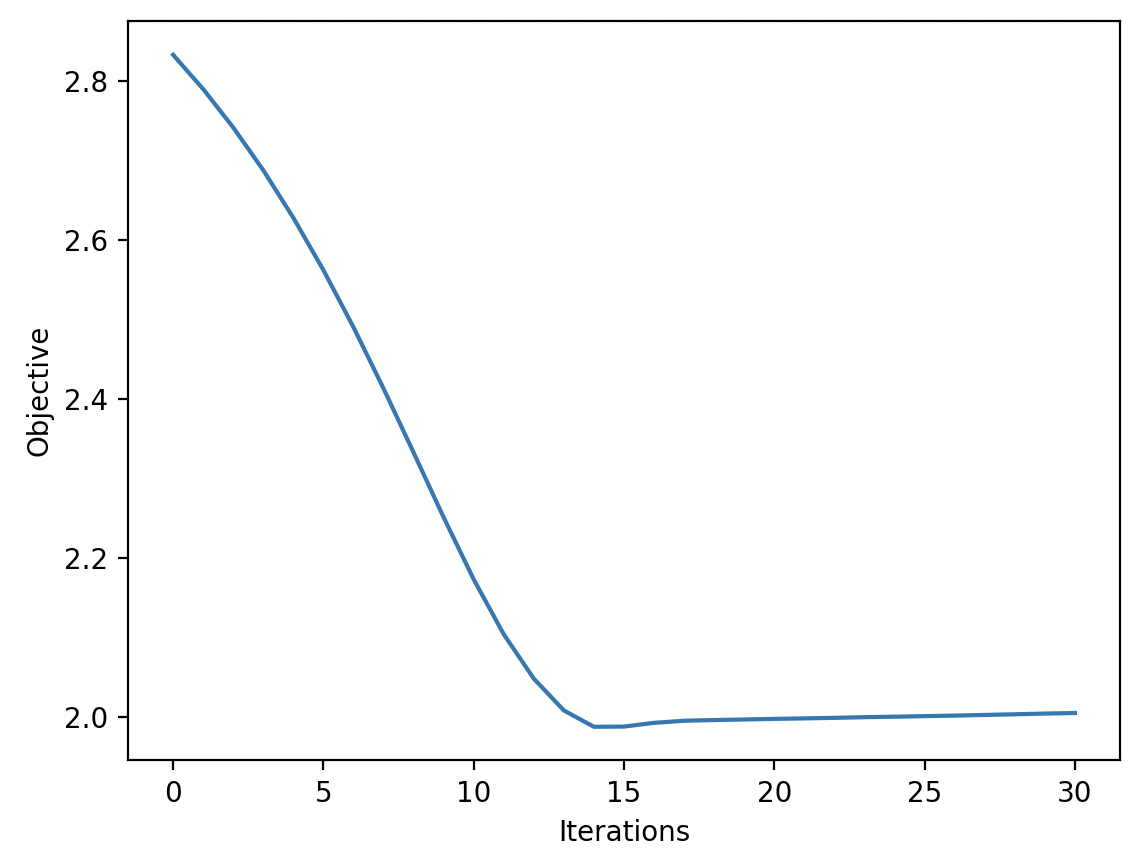}
\put(0,-3){\fcolorbox{black}{white}{$b$}}
\end{overpic}
\end{minipage} 
\end{tabular}  
 \caption{\it Convergence histories in the path planning examples of \cref{sec.aniper} when (a) The landscape has 4 peaks and wind is omitted; (b) The landscape has 3 peaks and wind is blowing.}
  \label{fig.pathgraph}
\end{figure}

%
%
%
%

\subsection{Optimization of the laser path for additive manufacturing}\label{sec.am}

\noindent This section arises in the context of the fabrication of a 3d shape with the Electron Beam Melting 3d printing technology \cite{gibson2021additive,korner2016additive}. 
This process starts by slicing the 3d shape to be produced into a series of horizontal layers, that will be assembled one on top of the other. The construction then takes place in a build chamber which is filled by metallic powder; each layer is assembled by selectively melting the powder in the region occupied by the considered pattern, see \cref{fig.amset}.
Drawing inspiration from the work \cite{boissier2020additive}, we aim to optimize the trajectory of the laser during the assembly of a given 2d layer so that powder is melted exactly in the desired region. 

 \begin{figure}[!ht]
\centering
\includegraphics[width=0.85\textwidth]{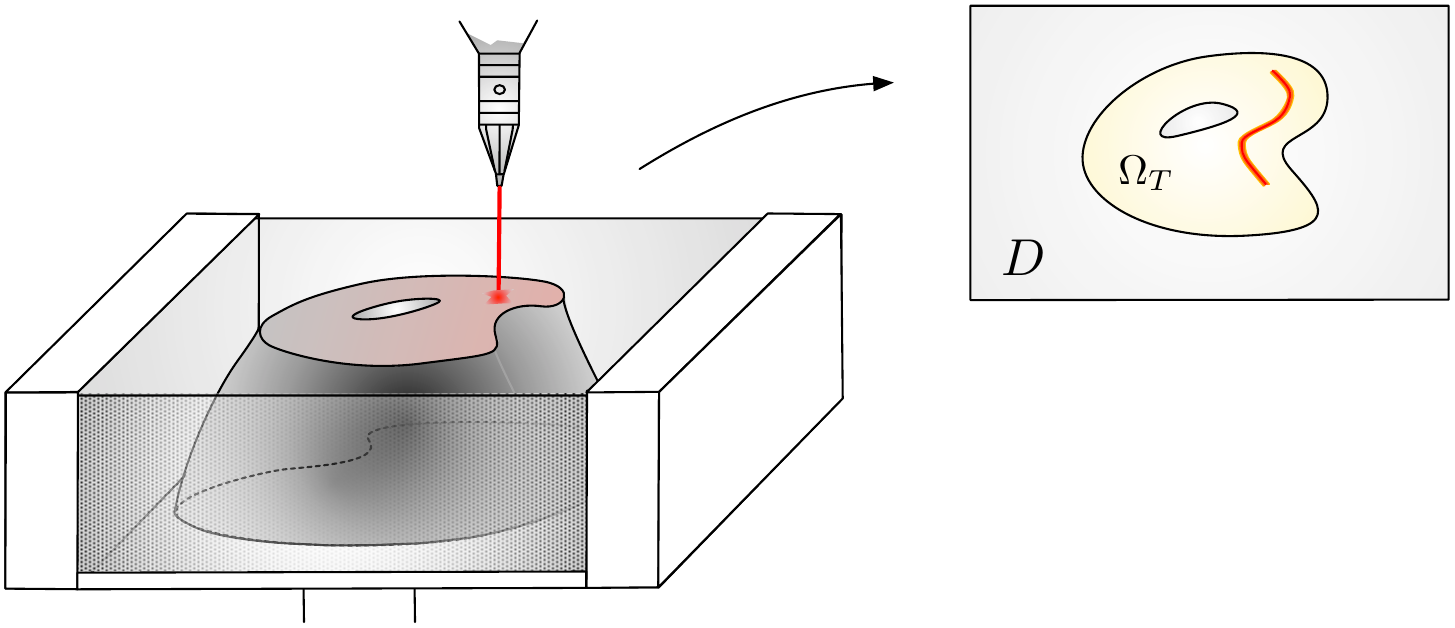}
\caption{\it Construction of a shape by Electron Beam Melting, as considered in \cref{sec.am}: the laser is required to melt the metallic powder specifically in the currently processed 2d slice of the shape.}
\label{fig.amset} 
\end{figure}

We consider the steady-state model proposed in \cite{boissier2020additive}, whose physical justification is provided in the appendix of that article. 
The hold-all domain $D$ represents the currently processed two-dimensional slice of the build chamber; the laser acts as an instantaneous source of heat along the laser path, which is represented by an open curve $\Gamma \subset D$. The temperature $u_\Gamma$ in these circumstances is the $H^1(D)$ solution to the conductivity equation: 
\begin{equation}\label{eq.heateq}
\left\{
\begin{array}{cl}
-\dv(\gamma \nabla u_\Gamma) + \beta (u_\Gamma - u_0) = q \delta_\Gamma& \text{in } D, \\
\gamma \frac{\partial u_\Gamma}{\partial n} = 0 & \text{on } \partial D,
\end{array}
\right.
\end{equation}
where $\delta_\Gamma$ stands for integration on $\Gamma$. Precisely, $u_\Gamma$ is the solution to the following variational problem: 
$$ \text{Search for } u_\Gamma \in H^1(D)\: \text{ s.t. } \forall v \in H^1(D), \quad \int_D \gamma \nabla u_\Gamma \cdot \nabla v \:\d x + \int_D \beta u_\Gamma v \:\d x = \int_{D} \beta u_0 v \:\d x +\int_\Gamma q v \:\d s.$$
In this model, $\gamma$ is the thermal conductivity of the powder, the parameter $\beta$ accounts for the heat transmitted from the current 2d layer to the outer medium whose temperature equals $u_0$, and $q$ is the power of the laser per unit of length.
For simplicity, $\gamma$, $\beta$, $u_0$ and $q$ are assumed to be constant. 

Denoting by $\upc$ the phase change temperature of the powder and by $\Omega_T$ the target shape to be melted within the slice $D$, we consider the minimization of the following objective function:
\begin{equation}\label{eq.Jam}
J(\Gamma) = \int_{\Omega_T}{\left[u_\Gamma - \upc \right]_-^2 \:\d x} +  \int_{D\setminus \overline{\Omega_T}}{\left[u_\Gamma - \upc \right]_+^2 \:\d x} ,
\end{equation}
where we have set $[t]_+ = \max(0,t)$ and $[t]_- = \min(0,t)$. 
Intuitively, we wish to optimize the laser path $\Gamma$ so that the induced temperature should be larger than $\upc$ in $\Omega_T$, thus making the first integral in \cref{eq.Jam} small, and lower than $\upc$ in $D \setminus \overline{\Omega_T}$, thus making the second integral small.

The shape derivative $J^\prime(\Gamma)(\theta)$ of this functional is calculated in \cite{boissier2020additive} thanks to the formal C\'ea's method \cite{cea1986conception}; for completeness, the computation is presented in \cref{app.sdam} by means of the rigorous technique exposed in e.g. \cite{henrot2018shape,murat1976controle}.


As such, the minimization of $J(\Gamma)$ tends to produce curves $\Gamma$ presenting self-intersections -- a pattern that is often undesirable in practice. To alleviate the onset of such features, we follow \cite{allaire2016thickness} and bring into play an additional functional:
$$C(\Gamma) = \int_\Gamma \Big(\left[d_\Omega(x + \dmin n(x))\right]_-^2  + \left[d_\Omega(x - \dmin n(x))\right]_+^2 \Big)\:\d s,$$
where $\Omega$ denotes any smooth bounded domain whose boundary encloses $\Gamma$ and $d_\Omega$ stands for the signed distance function to $\Omega$, see \cref{eq.sdf}. 
Loosely speaking, the minimization of $C(\Gamma)$ imposes that for each parameter value $t \in (-\frac{\dmin}{2},\frac{\dmin}{2})$, the offset curve $\Gamma_{tn} = (\Id + tn)(\Gamma)$ should be diffeomorphic to $\Gamma$: $C(\Gamma)$ takes large values if at some point $x \in \Gamma$, one of the normal rays $(x,x + \dmin n(x))$ or $(x,x - \dmin n(x))$ crosses the boundary of $\Omega$.
The calculation of the shape derivative of $C(\Gamma)$ and its numerical integration into a shape optimization algorithm are achieved exactly along the lines of our previous work \cite{feppon2018var}, which is devoted to a variational method for enforcing geometric constraints, expressed in terms of the signed distance function to the optimized shape.

All things considered, the shape optimization problem under scrutiny reads: 
\begin{equation}\label{eq.sopbam}
\min\limits_{\Gamma} \:\: \Big( J(\Gamma) + \ell C(\Gamma) \Big), 
\end{equation}
where $\ell$ is a fixed weight.

We apply this shape optimization framework to the setting exemplified in \cref{fig.amset1} (a): the design domain $D$ is the unit square $(0,1)^2$ and the shape $\Omega_T$ of the 2d slice to be assembled is a slightly smaller square. Starting from the simple initial curve $\Gamma^0$ depicted on \cref{fig.amres} (a), we apply our numerical \cref{algo.openls} to the resolution of the optimization problem \cref{eq.sopbam}. The parameters of the computation are $\gamma =0.01$, $\beta = 0.5$, $q = 100$, $u_0 = 0$, $\upc = 350$ and $\ell =1\text{e}9$. The maximum number of vertices in a mesh is $48,245$ (for about twice as many elements) and the total computational time is about 20 mn. A few snapshots of the optimization process are reported on \cref{fig.amres} and the convergence history in \cref{fig.amset1} (b) shows the smooth decrease of the objective function until a local minimum is attained.

\begin{figure}[!ht]
    \centering
 \begin{tabular}{cc}
\begin{minipage}{0.45\textwidth}
\centering
\begin{overpic}[width=0.75\textwidth]{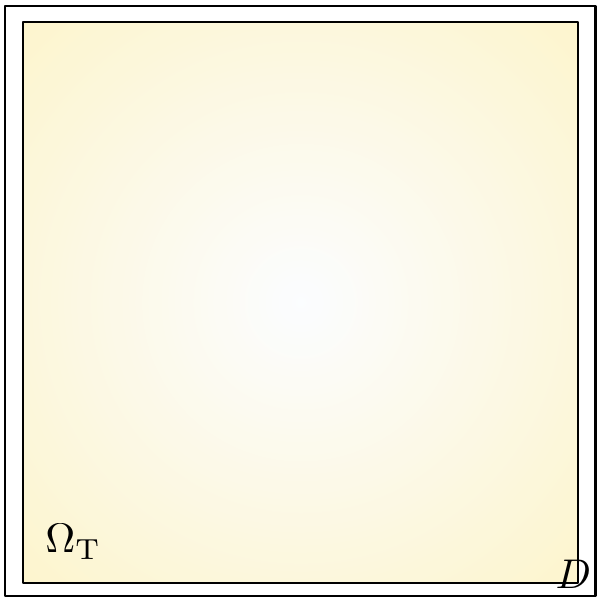}
\put(0,-3){\fcolorbox{black}{white}{$a$}}
\end{overpic}
\end{minipage} & 
\begin{minipage}{0.45\textwidth}
\centering
\begin{overpic}[width=1.0\textwidth]{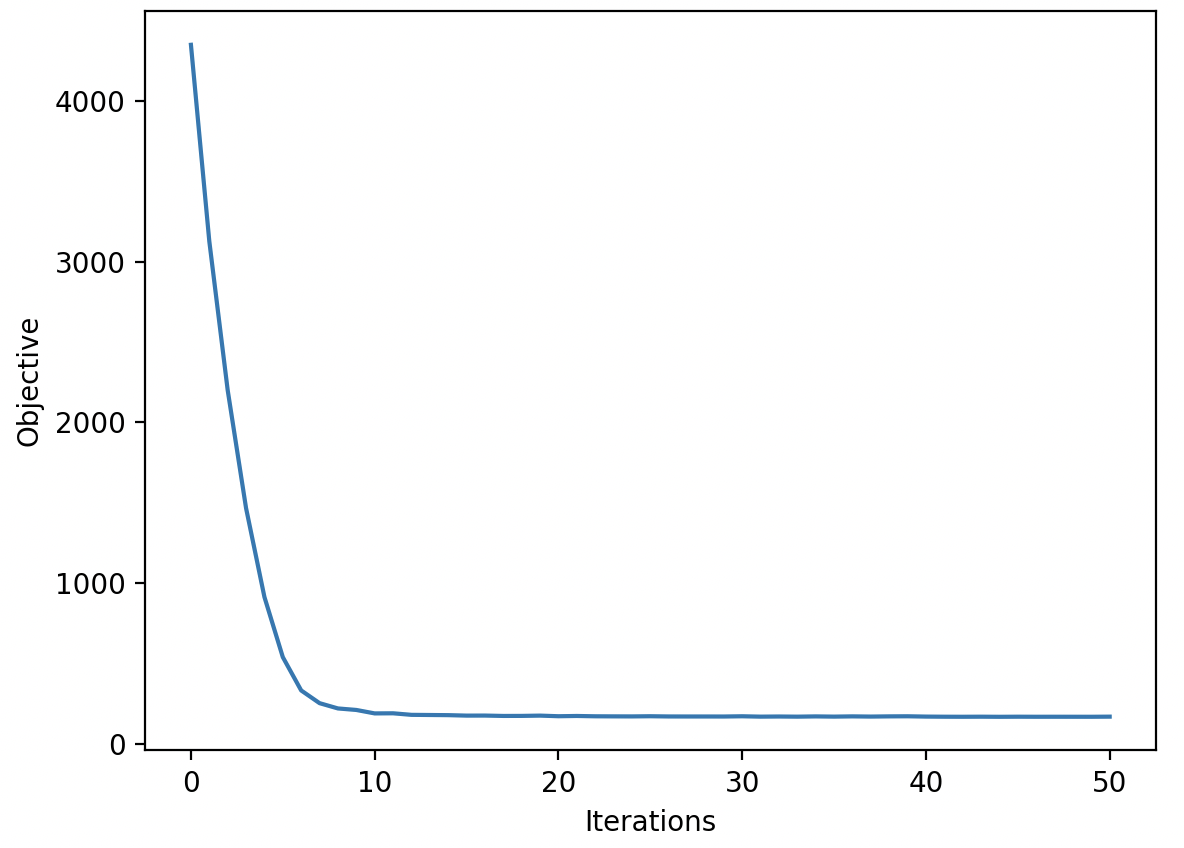}
\put(0,-3){\fcolorbox{black}{white}{$b$}}
\end{overpic}
\end{minipage} 
\end{tabular}  
 \caption{\it (a) Target shape in the first numerical optimization example of the laser path used in the Electron Beam Melting method considered in \cref{sec.am}; (b) Convergence history.}
  \label{fig.amset1}
\end{figure}

\begin{figure}[!ht]
    \centering
    \begin{tabular}{cc}
\begin{minipage}{0.48\textwidth}
\centering
\begin{overpic}[width=1.0\textwidth]{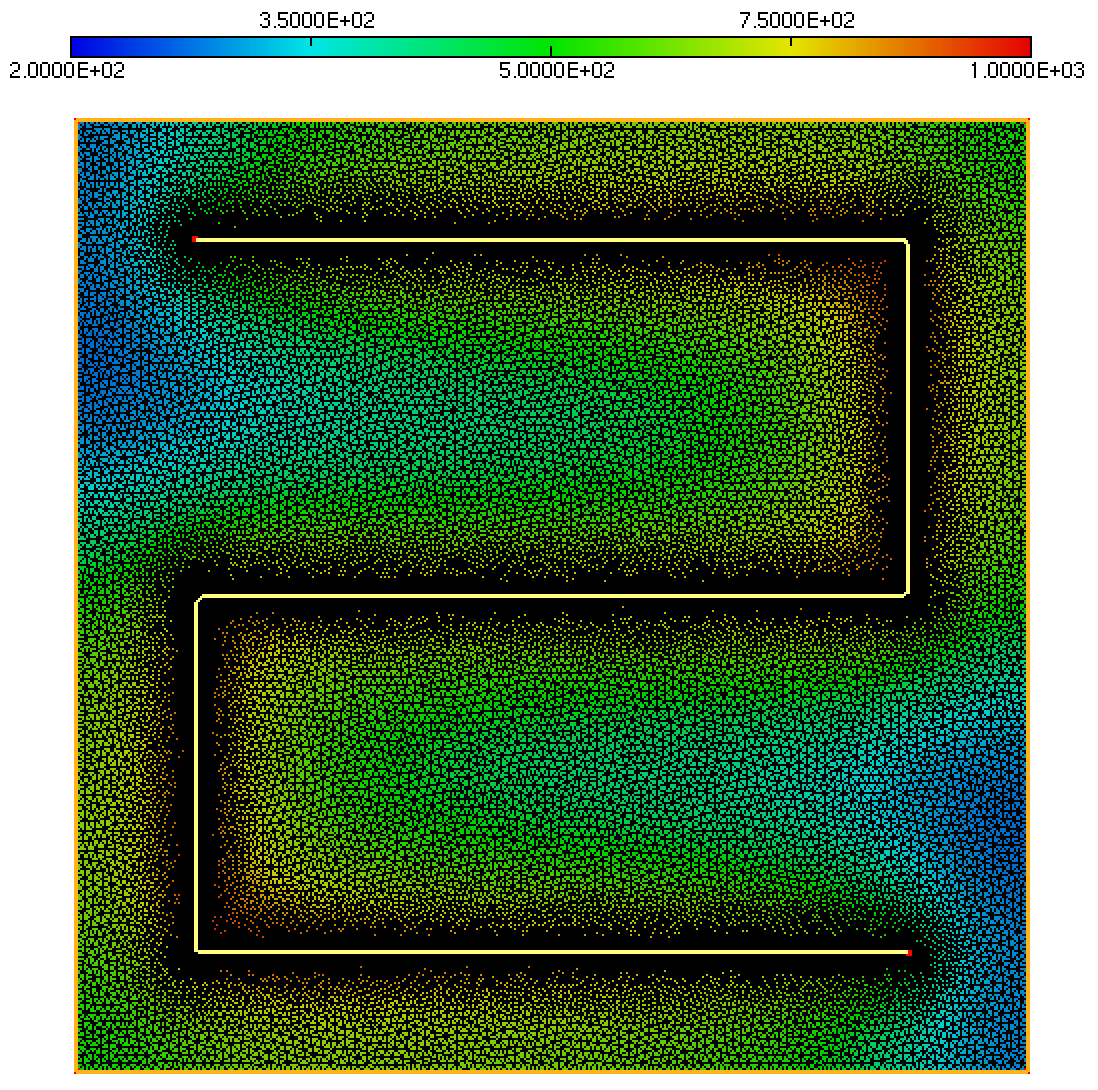}
\put(0,-3){\fcolorbox{black}{white}{$n=0$}}
\end{overpic}
\end{minipage} & 
\begin{minipage}{0.48\textwidth}
\centering
\begin{overpic}[width=1.0\textwidth]{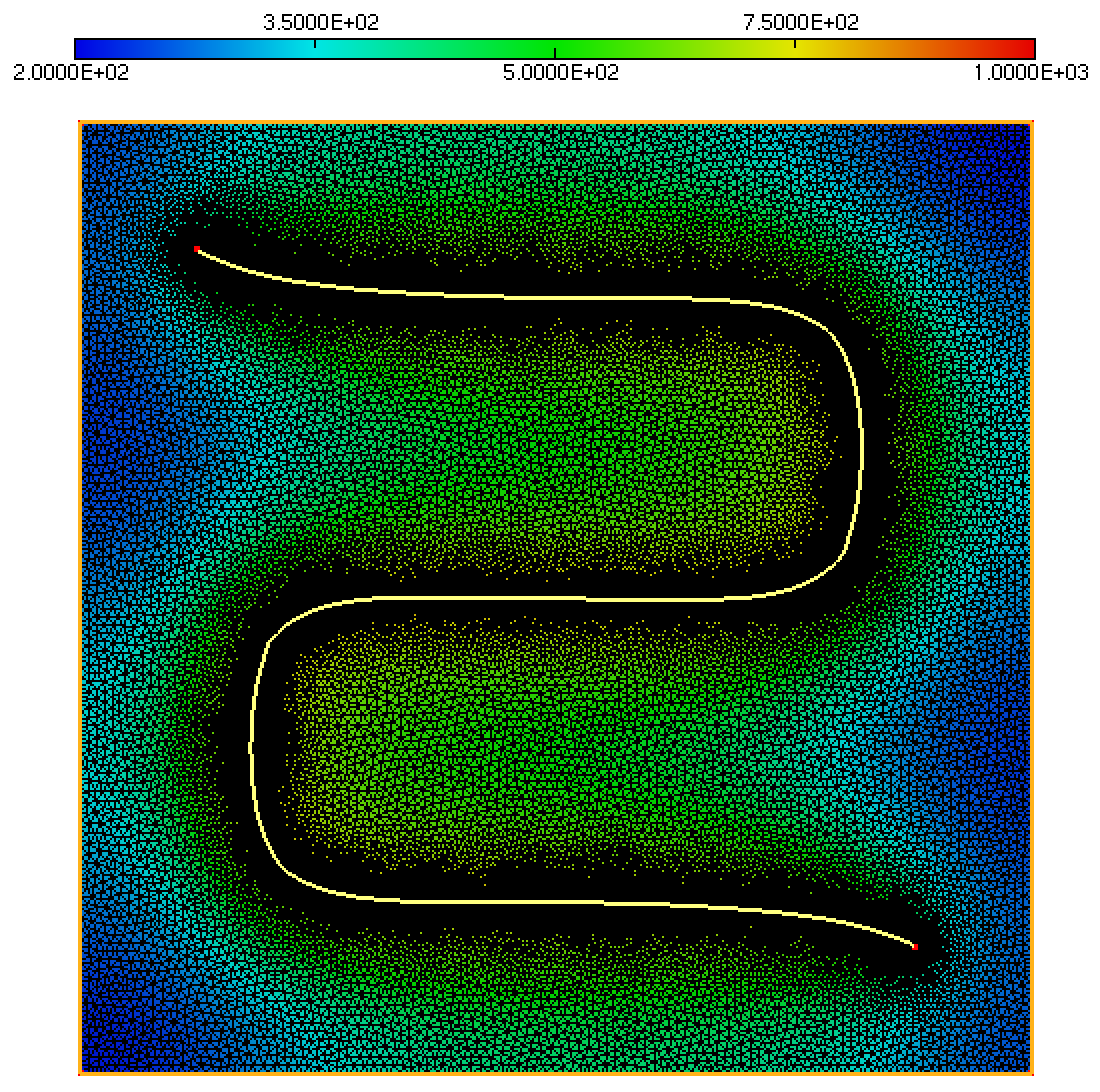}
\put(0,-3){\fcolorbox{black}{white}{$n=5$}}
\end{overpic}
\end{minipage}  
\end{tabular}  \par\bigskip 
    \begin{tabular}{cc}
\begin{minipage}{0.48\textwidth}
\centering
\begin{overpic}[width=1.0\textwidth]{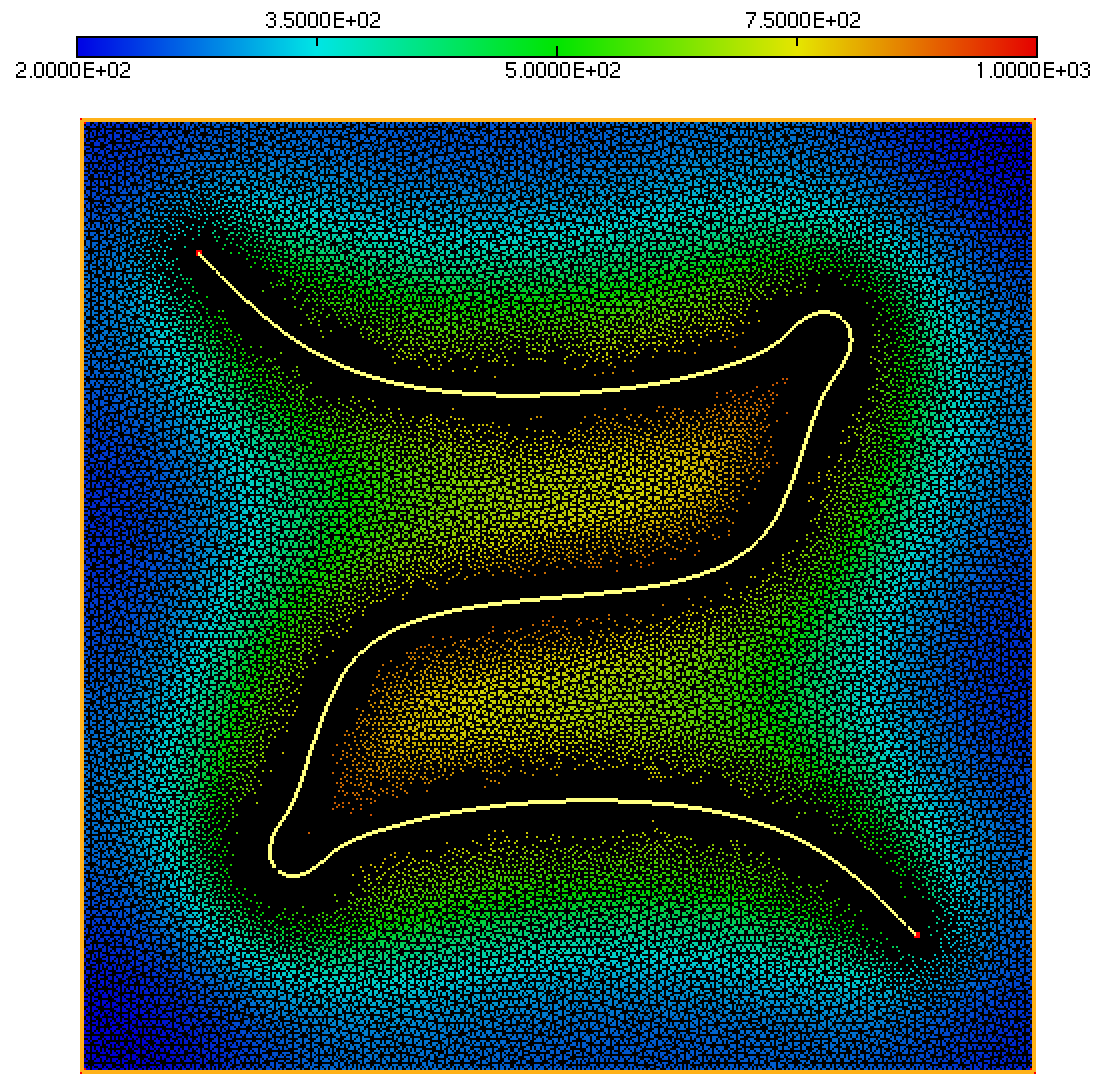}
\put(0,-3){\fcolorbox{black}{white}{$n=20$}}
\end{overpic}
\end{minipage} & 
\begin{minipage}{0.48\textwidth}
\centering
\begin{overpic}[width=1.0\textwidth]{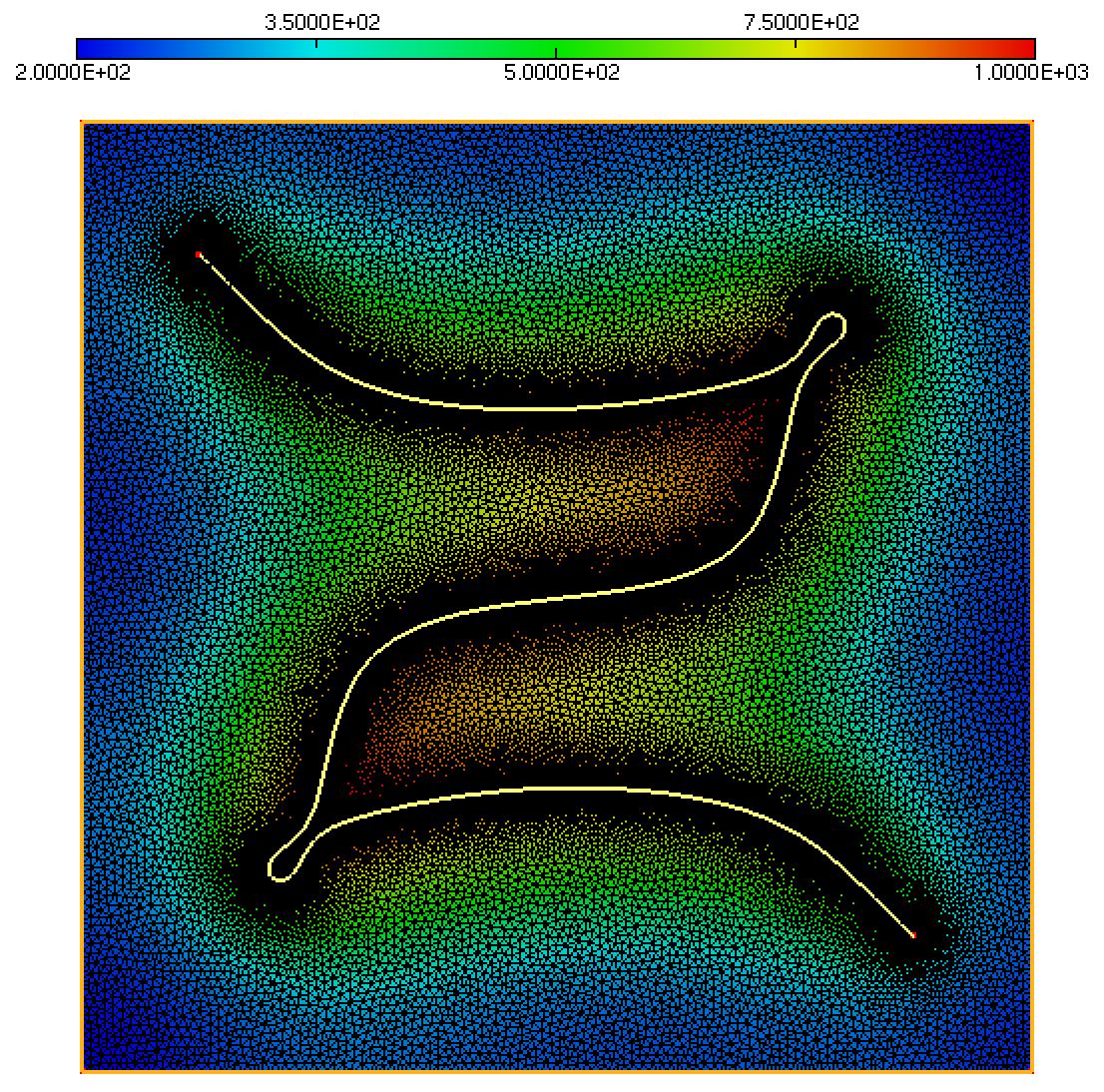}
\put(0,-3){\fcolorbox{black}{white}{$n=50$}}
\end{overpic}
\end{minipage}  
\end{tabular}  
 \caption{\it Intermediate shapes of the path of the laser in view of the assembly of the 2d slice of \cref{fig.amset1} (a) in \cref{sec.am}; in each situation, the colors scale refers to the values of the temperature $u_\Gamma$.}
  \label{fig.amres}
\end{figure}

We next turn to a second example, associated to the more complex layer pattern depicted on \cref{fig.amset2} (a), using the same physical parameters as for the previous example; a few intermediate configurations of the optimized curve are presented on \cref{fig.amcres} and the convergence history of the computation is presented on \cref{fig.amset2} (b).  \par\bigskip

\begin{figure}[!ht]
    \centering
 \begin{tabular}{cc}
\begin{minipage}{0.5\textwidth}
\centering
\begin{overpic}[width=1.0\textwidth]{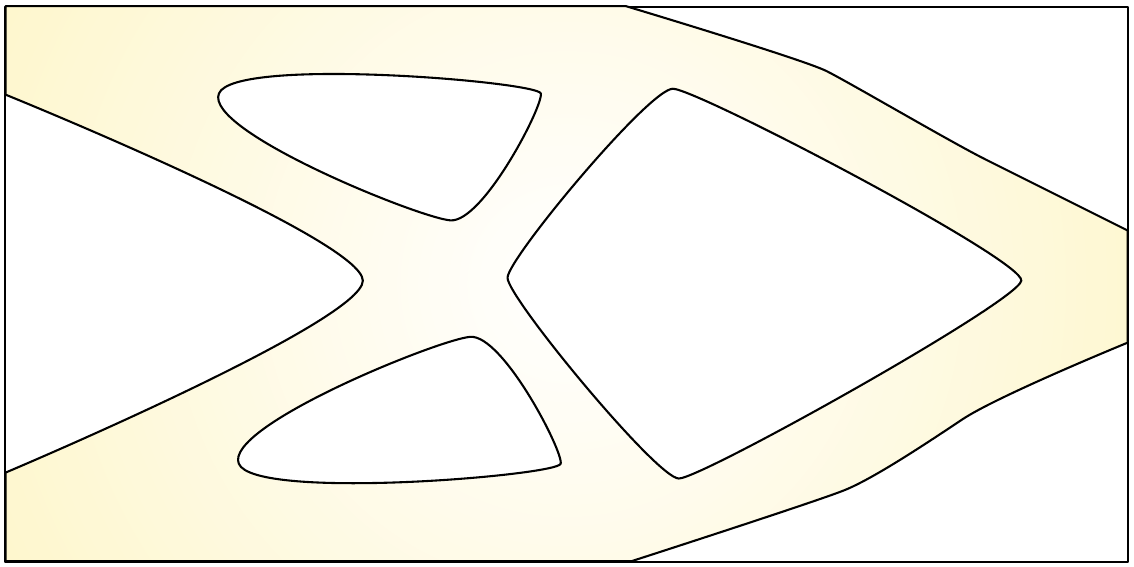}
\put(0,-3){\fcolorbox{black}{white}{$a$}}
\end{overpic}
\end{minipage} & 
\begin{minipage}{0.4\textwidth}
\centering
\begin{overpic}[width=0.9\textwidth]{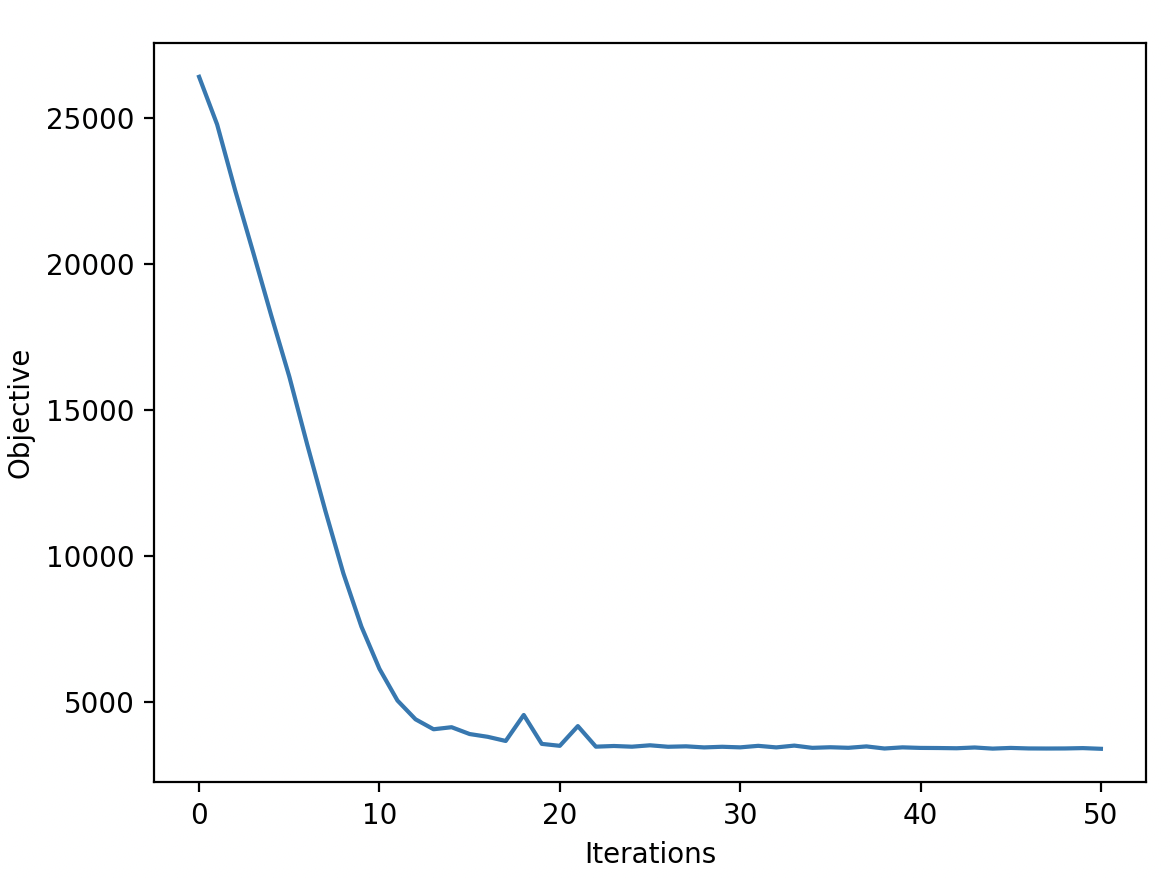}
\put(0,-3){\fcolorbox{black}{white}{$b$}}
\end{overpic}
\end{minipage} 
\end{tabular}  
 \caption{\it (a) Target shape in the second numerical optimization example of the laser path used in the Electron Beam Melting method considered in \cref{sec.am}; (b) Convergence history.}
  \label{fig.amset2}
\end{figure}

\begin{figure}[!ht]
    \centering
    \begin{tabular}{cc}
\begin{minipage}{0.48\textwidth}
\centering
\begin{overpic}[width=1.0\textwidth]{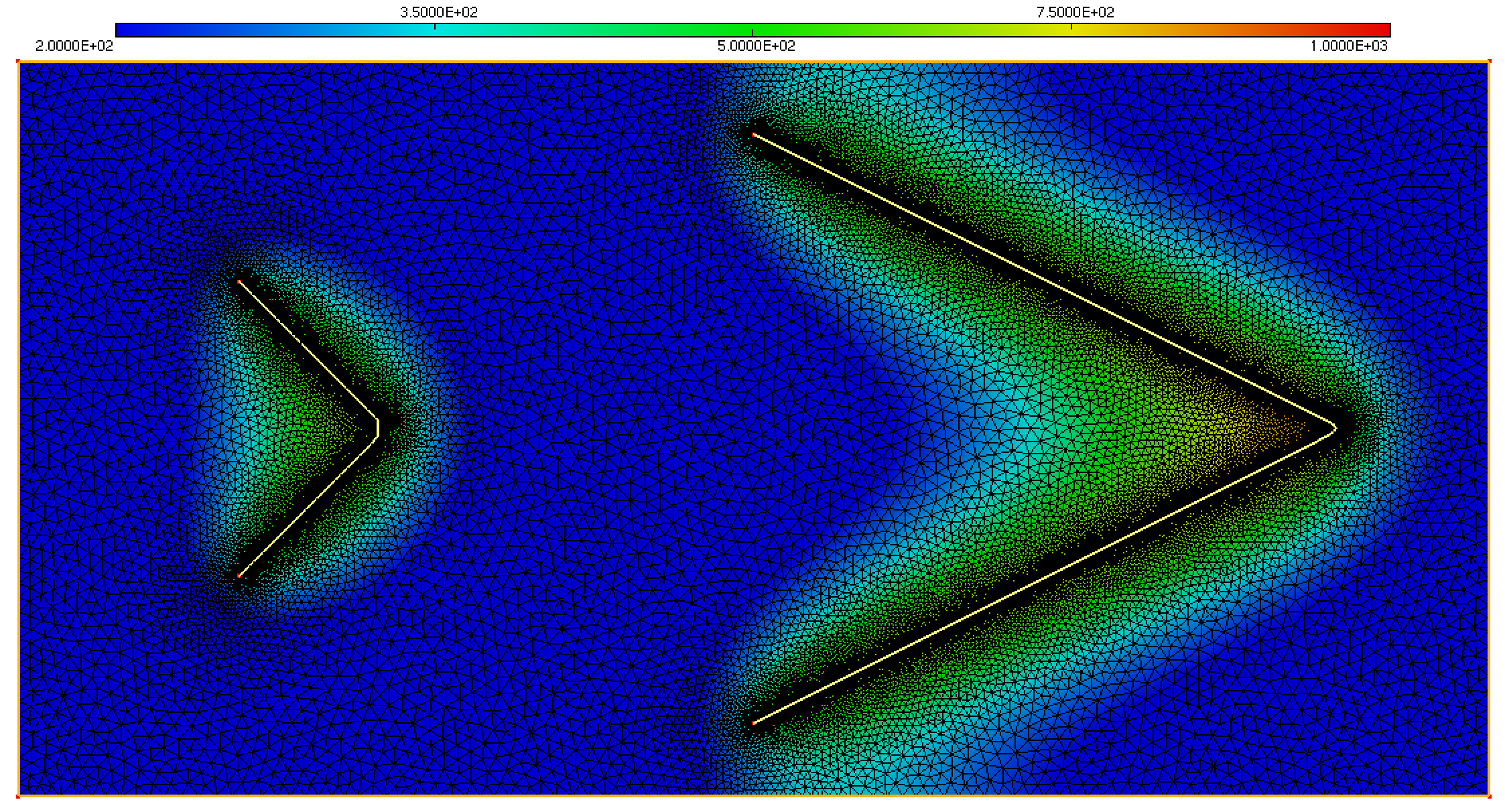}
\put(0,-3){\fcolorbox{black}{white}{$n=0$}}
\end{overpic}
\end{minipage} & 
\begin{minipage}{0.48\textwidth}
\centering
\begin{overpic}[width=1.0\textwidth]{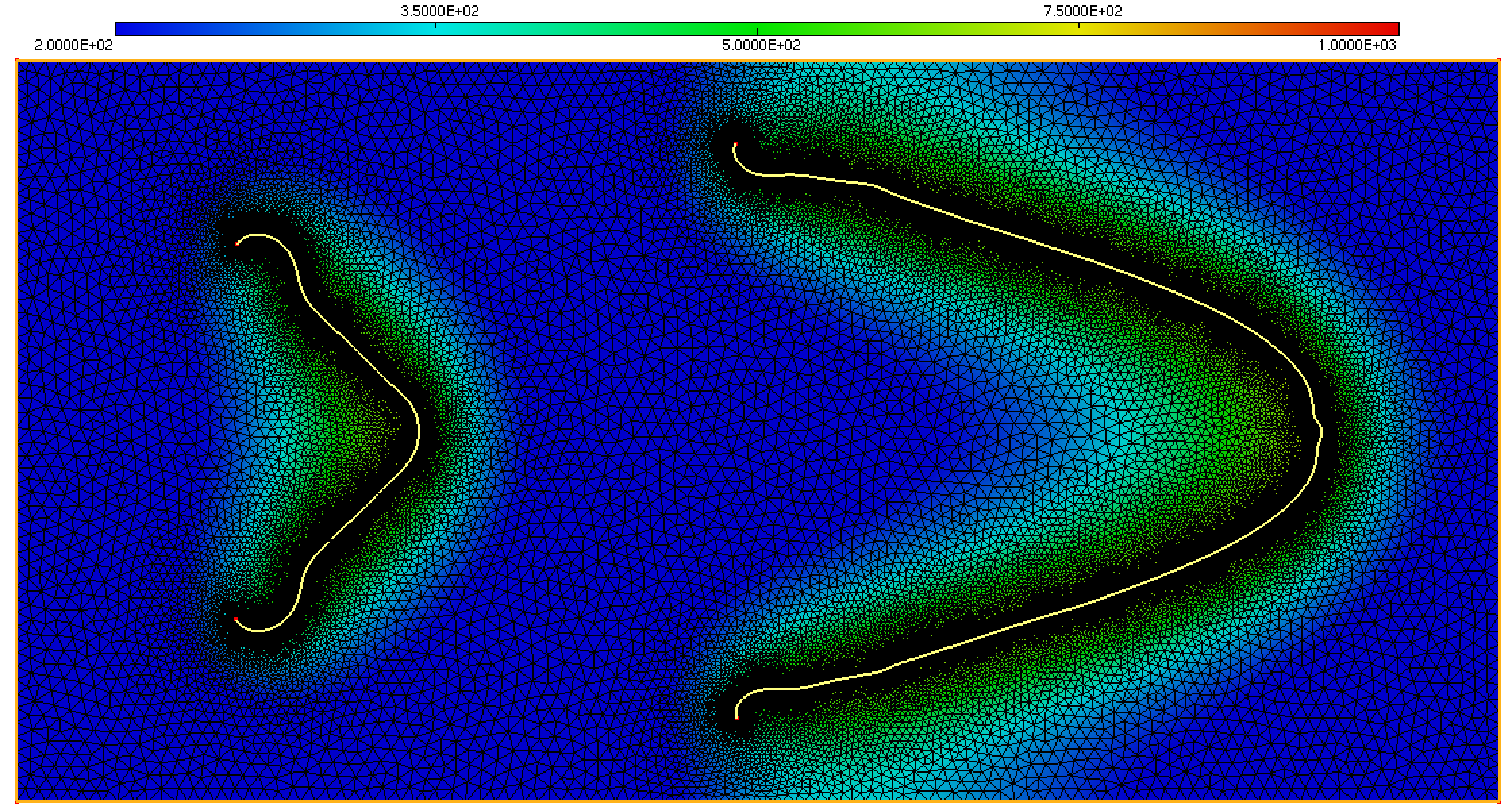}
\put(0,-3){\fcolorbox{black}{white}{$n=5$}}
\end{overpic}
\end{minipage}  
\end{tabular}  \par\bigskip 
    \begin{tabular}{cc}
\begin{minipage}{0.48\textwidth}
\centering
\begin{overpic}[width=1.0\textwidth]{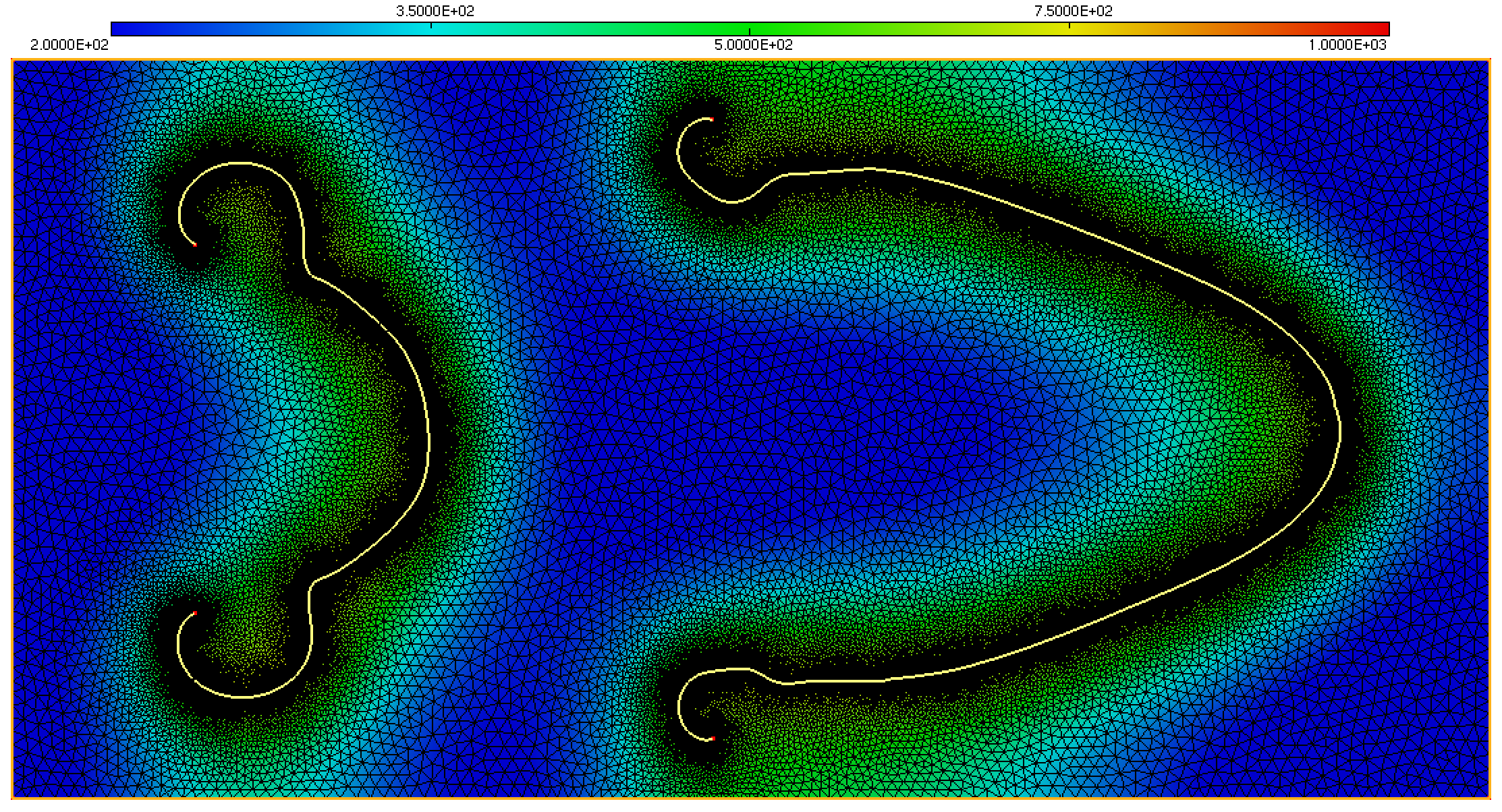}
\put(0,-3){\fcolorbox{black}{white}{$n=10$}}
\end{overpic}
\end{minipage} & 
\begin{minipage}{0.48\textwidth}
\centering
\begin{overpic}[width=1.0\textwidth]{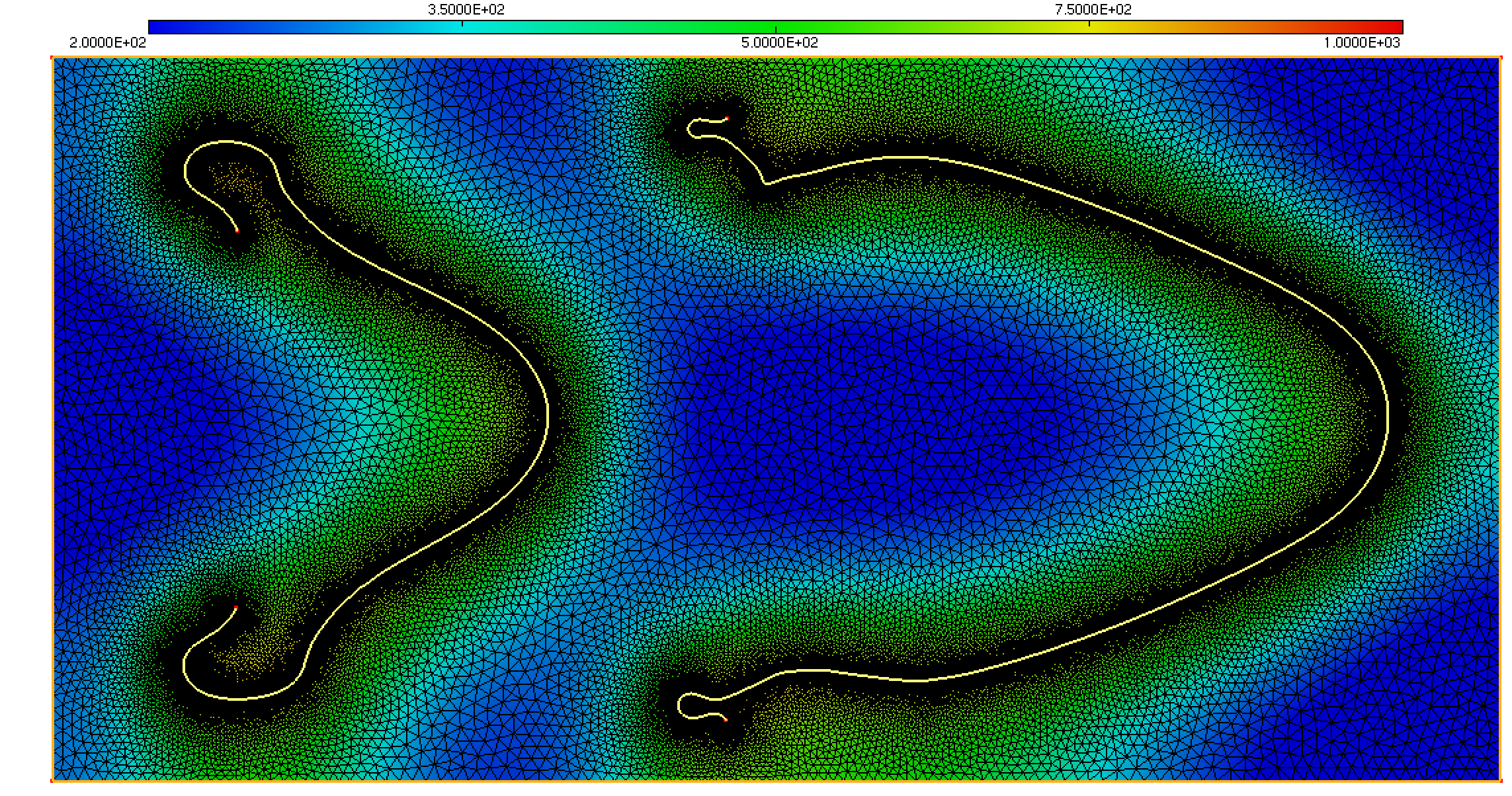}
\put(0,-3){\fcolorbox{black}{white}{$n=50$}}
\end{overpic}
\end{minipage}  
\end{tabular}  
 \caption{\it Intermediate shapes of the path of the laser in view of the assembly of the 2d slice of \cref{fig.amset2} (a) in \cref{sec.am}.}
  \label{fig.amcres}
\end{figure}

\begin{remark}
\noindent \begin{itemize}
\item In principle, one may wish to add a constraint about the length of the path $\Gamma$ to the optimization problem \cref{eq.sopbam}; although it is perfectly doable, we did not feel the need to do so to observe interesting optimized paths in our examples. 
\item The optimization of the path $\Gamma$ via smooth perturbations of its shape conducted in this section could be complemented with the use of ``topological derivatives'', allowing to add small connected components to $\Gamma$ in an optimal fashion, see Chap.8 of \cite{boissier2020coupling} for a discussion about this subject.
\end{itemize}
\end{remark}

\subsection{Least-square reconstruction of a fault line in the underground}\label{sec.fault}

\noindent This section deals with an application of our framework in geophysics. 
The considered open curve $\Gamma$ accounts for a fault line in the underground Earth, where the displacement is discontinuous. 
We aim to reconstruct the shape of $\Gamma$ by minimizing a least-square function of the difference between the displacement predicted by a ``forward'' mathematical model and that observed in the course of a geological event. 

We first present the exact mathematical model predicting the displacement of the underground from the knowledge of the fracture $\Gamma$ in \cref{sec.modfault}, before introducing an approximate version in \cref{sec.faultdirect} which simplifies the numerical resolution. The inverse reconstruction problem of $\Gamma$ from observations of the displacement is eventually addressed in \cref{sec.IPfracture}. 

\subsubsection{Description of the physical model}\label{sec.modfault}

\noindent Let $D\subset \R^2$ be a ``hold-all'' domain, accounting for a two-dimensional section of the geographical region of interest, which is assumed to be made of a linearly elastic material. 
This region contains a fault, modeled as an open curve $\Gamma \Subset D$, whose unknown shape is to be determined, see \cref{fig.fracset} (a).
Along the fault, the elastic displacement is discontinuous (i.e. the underground may slip differently on both sides), but the normal traction is continuous. 
The bottom side $\partial D_D$ of $D$ is assumed to be fixed and the remaining boundary $\partial D \setminus \overline{\partial D_D}$, which is composed of the upper surface $\Gobs$ and the lateral sides, is traction-free, i.e. homogeneous Neumann boundary conditions are applied.
The displacement of the crust is the unique solution $u_\Gamma \in H^1(D \setminus \Gamma)^2$ to the linear elasticity system: 
\begin{equation}\label{eq.elasfrac}
\left\{
\begin{array}{cl}
-\dv(Ae(u_\Gamma)) = 0 & \text{in } D \setminus \overline\Gamma, \\
u_\Gamma = 0 &\text{on } \partial D_D, \\
Ae(u_\Gamma)n = 0 & \text{on } \partial D \setminus \overline{\partial D_D},\\
\left[u_\Gamma\right] = g_\Gamma \text{ and } \left[Ae(u_\Gamma)n\right] = 0 & \text{on } \Gamma.
\end{array}
\right. 
\end{equation}
Here, $e(u) = \frac12(\nabla u+ \nabla u^T)$ is the strain tensor associated to a displacement field $u : D \to \R^2$ and $A$ is the Hooke's law of the elastic material in the crust: 
$$\text{For all } 2\times 2 \text{ symmetric matrix } e, \quad Ae = 2\mu e + \lambda \tr(e) \I, $$
where $\lambda = 0.5769$ and $\mu = 0.3846$ are the Lam\'e coefficients.
In \cref{eq.elasfrac}, $n$ stands for the unit normal vector to $\Gamma$; we also denote by:
\begin{equation}\label{eq.defjump}
\alpha^+(x) = \lim\limits_{t \to 0 \atop t > 0} \alpha(x+tn(x)), \:\: \alpha^-(x) = \lim\limits_{t \to 0 \atop t > 0} \alpha(x-tn(x)), \text{ and } \left[ \alpha \right] (x) = \alpha^+(x) - \alpha^-(x)
\end{equation}
the one-sided limits and the jump of a quantity $\alpha$ which is smooth enough from either side of $\Gamma$. 
The  so-called slip vector $g_\Gamma$ belongs to the functional space 
\begin{equation}\label{eq.H12t}
\widetilde{H}^{1/2}(\Gamma)^2 := \left\{ u \in L^2(\Gamma)^2 \text{ s.t. } \widetilde{u} \in H^1(\widetilde{\Gamma})^2 \right\}, 
\end{equation}
where $\widetilde{\Gamma}$ is an arbitrary extension of $\Gamma$ into a closed curve and $\widetilde{u}$ is the extension of $u$ to $\widetilde\Gamma$ by $0$.
The expression of $g_\Gamma$ depends on the shape of $\Gamma$. Let $c_0, c_1$ be the endpoints of $\Gamma$ and, for any points $x_0, x_1 \in \Gamma$, let $\Gamma_{x_0,x_1}$ denote the region of $\Gamma$ comprised between $x_0$ and $x_1$; $g_\Gamma(x)$ is of the form:
\begin{equation}\label{eq.slipvec}
\forall x \in \Gamma, \quad g_\Gamma(x) = g(s_\Gamma(x)), 
\end{equation}
where $g : [0,1] \to \R^2$ is a given smooth function vanishing at $0$ and $1$, and 
$$s_\Gamma(x) := \frac{\lvert \Gamma_{c_0,x}\lvert}{\lvert\Gamma\lvert}, \text{ where } \lvert \Gamma \lvert = \int_\Gamma \:\d s,$$ 
is the normalized arc length of the region $\Gamma_{c_0,x}$ of $\Gamma$ delimited by $c_0$ and $x$.

\begin{figure}[!ht]
    \centering
    \begin{tabular}{cc}
\begin{minipage}{0.48\textwidth}
\centering
\begin{overpic}[width=1.0\textwidth]{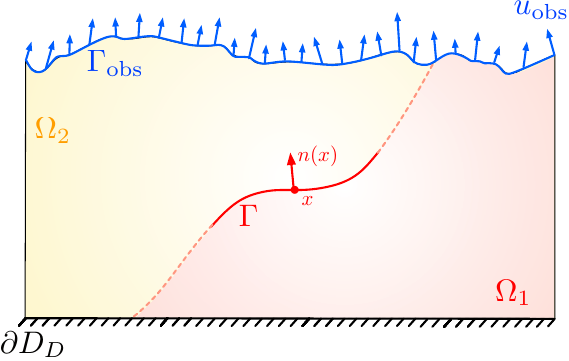}
\put(0,-3){\fcolorbox{black}{white}{a}}
\end{overpic}
\end{minipage} & 
\begin{minipage}{0.5\textwidth}
\centering
\begin{overpic}[width=1.0\textwidth]{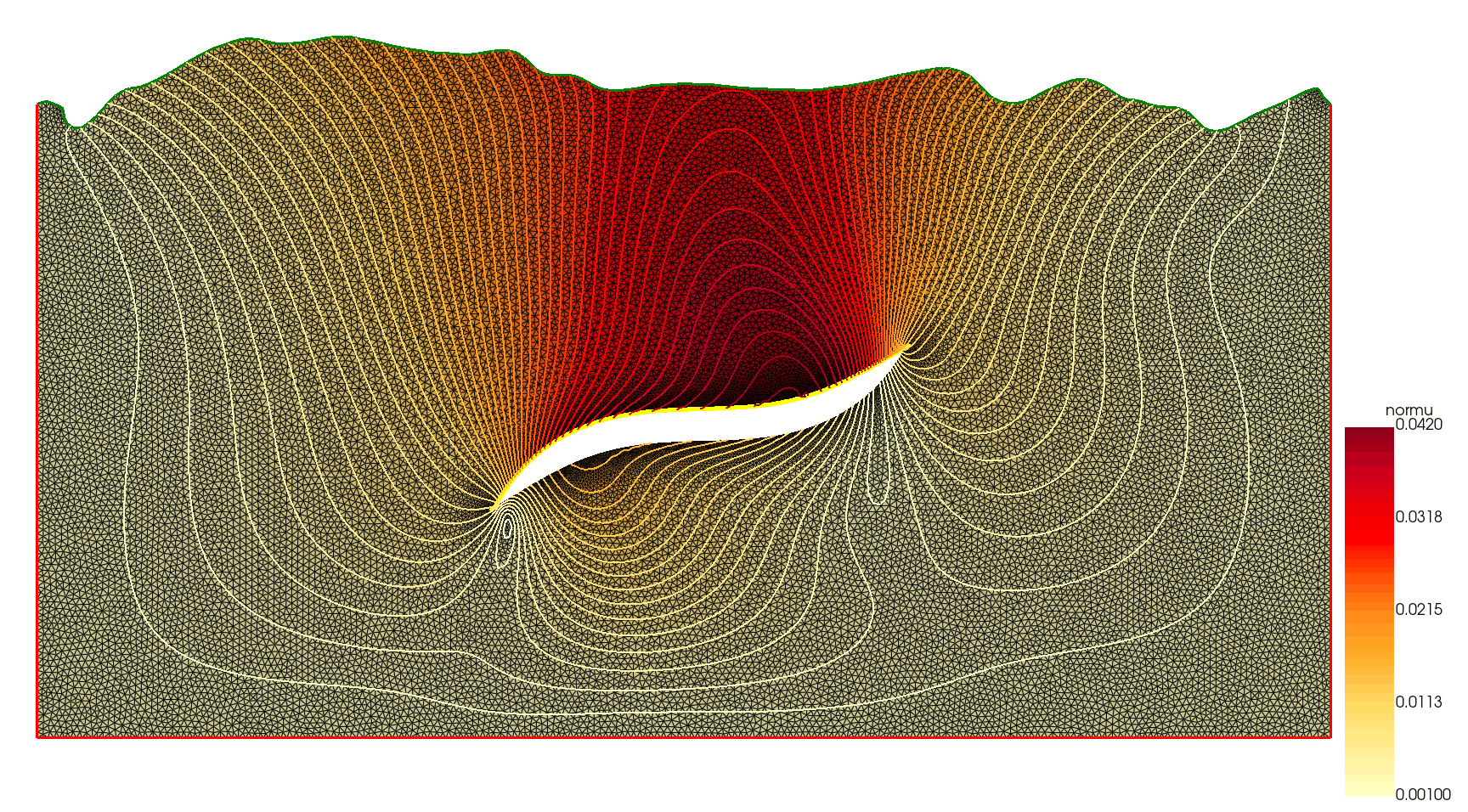}
\put(0,-3){\fcolorbox{black}{white}{b}}
\end{overpic}
\end{minipage}  
\end{tabular}  
 \caption{\it (a) Setting of the fracture identification example of \cref{sec.fault}; (b) Deformed configuration of the underground by application of the fields $u_{1,\e}$, $u_{2,\e}$ resulting from the approximate penalization and domain decomposition method of \cref{sec.faultdirect}; the color scale corresponds to the norm of the displacement.}
  \label{fig.fracset}
\end{figure}

\begin{remark}
The assumption that $g(t)$ vanishes at $t=0$ and $t=1$, ensuring that the slip vector $g_\Gamma$ belongs to the space $\widetilde{H}^{1/2}(\Gamma)$ in \cref{eq.H12t}, can be dispensed with, at the expense of dealing with a more intricate functional setting for the problem \cref{eq.elasfrac} \cite{aspri2025shape}.
\end{remark}

\subsubsection{Numerical simulation}\label{sec.faultdirect}

\noindent The numerical simulation of \cref{eq.elasfrac} is cumbersome, even when a body-fitted description of the fracture $\Gamma$ is available as a collection $\calL$ of edges of the mesh $\calT$ of the computational domain $D$.  Indeed, ``classical'' continuous finite element methods cannot easily represent discontinuous functions: one would have to duplicate the nodes on $\Gamma$ into different copies, associated to the triangles located on either side of the latter, and then to modifiy the element-to-vertex relations accordingly. This procedure is quite intrusive with respect to the implementation of the finite element solver, see nevertheless \cite{feppon2026fractured} for a numerical algorithm allowing to do so. 

Here, we rather rely on the elegant method introduced in \cite{allaire2021shape,allaire2022topology}, based on a duplication of the sought function. 
Briefly, let us introduce two subdomains $\Omega_1$, $\Omega_2 \subset D$ dividing $D$ according to $\Gamma$, that is:
$$\Omega_1 \cap \Omega_2 = \emptyset , \:\: \overline{D} = \overline{\Omega_1} \cup \overline{\Omega_2}, \text{ and } \Gamma \subset \partial \Omega_1\cap \partial \Omega_2. $$
The subdomains $\Omega_1$ and $\Omega_2$ lie respectively ``below'' and ``above'' the fracture set $\Gamma$: the normal vector $n$ is oriented from $\Omega_1$ to $\Omega_2$, see \cref{fig.fracset} (a). 
For instance, in the numerical context where $\Gamma$ is accounted for by two level set functions $\phi,\psi : D \to \R$ as in \cref{eq.2LS}, $\Omega_1$ and $\Omega_2$ may be defined as the negative and positive subdomains of $\phi$, respectively. 

Let us then introduce the functional space
$$H^1_{\partial D_D}(D) = \left\{ u \in H^1(D) \text{ s.t. } u = 0 \text{ on } \partial D_D \right\}, $$
and consider the following variational problem: 
\begin{multline}\label{eq.appvarffract}
\text{Search for } (u_{1,\e}, u_{2,\e}) \in H^1_{\partial D_D}(D)^4 \text{ s.t. } \forall (v_1, v_2) \in H^1_{\partial D_D}(D)^4, \\
 \int_D A_{1,\e} e(u_{1,\e}) : e( v_1) \:\d x +  \int_D A_{2,\e} e(u_{2,\e}) : e(v_2) \:\d x + \frac{1}{\e} \int_\Gamma ( u_{2,\e} - u_{1,\e} )  \cdot (v_2-v_1) \:\d s =\\
  \frac{1}{\e} \int_\Gamma g_\Gamma \cdot (v_2-v_1)  \:\d s,
\end{multline}
where we have defined the extensions of the Hooke's law $A$ outside $\Omega_1$ and $\Omega_2$ by:
$$A_{i,\e}(x) = \left\{
\begin{array}{cl}
A & \text{if } x \in \Omega_i, \\
\e A & \text{otherwise}.
\end{array}
\right. $$
Loosely speaking, \cref{eq.appvarffract} is obtained from \cref{eq.elasfrac} by duplicating the sought displacement $u$ (and the test function $v$) into the pair $(u_{1,\e}, u_{2,\e})$, where $u_{i,\e}$ is meant as an approximation of the restriction of $u$ to $\Omega_i$, $i=1,2$. Both $u_{1,\e}$ and $u_{2,\e}$ are continuous on $D$ but only their respective values on $\Omega_1$ and $\Omega_2$ are relevant. Furthermore, their traces do not agree on $\Gamma$: the difference $u_{2,\e} - u_{1,\e}$ is imposed to be an approximation of $g_\Gamma$ by penalization. 
The justification of this approximation is relatively classical; 
for completeness, the argument is sketched in \cref{app.justifbroken}, in the model setting of the conductivity equation, as a variation of the analysis conducted in \cite{allaire2021shape}. 

An application example of this approximation strategy is presented on \cref{fig.fracset} (b), where the solution $(u_{1,\e}, u_{2,\e})$ to \cref{eq.appvarffract} is computed in the physical situation of \cref{fig.fracset} (a). 
The penalization parameter is set to $\e = 1\text{e}^{-15}$ and the slip profile $g$ is defined by:
$$\forall s \in [0,1], \quad g(s) = \left\{
\begin{array}{cl}
\left(0.02\left(1-\left(\frac{s-0.5}{0.49}\right)^4 \right),0.05 \left(1-\left(\frac{s-0.5}{0.49}\right)^4 \right) \right) & \text{if } s \in [0.01,0.99],\\
0 & \text{otherwise} 
\end{array} 
\right.  $$

\subsubsection{Reconstruction of the fracture $\Gamma$ from observational data}\label{sec.IPfracture}

\noindent This section deals with the inverse problem associated to the ``forward'' model of \cref{sec.modfault,sec.faultdirect} for the elastic displacement of the underground in the presence of a fracture. We aim to reconstruct the shape of a fracture $\Gamma$ within the 2d elastic medium $D$ from the observation of the displacement $\uobs: \Gobs \to \R^2$ of the surface $\Gobs$. 

We follow the approach in \cite{aspri2025shape}.
The fracture $\Gamma$ is sought as a minimizer of the average on $\Gobs$ of the least-square difference between the observed displacement $\uobs$ and the model prediction $u_\Gamma$, solution to \cref{eq.elasfrac}. Precisely, we consider the optimization problem: 
\begin{equation}\label{eq.sopbfrac}
\min\limits_{\Gamma \subset D} J(\Gamma), \text{ where } J(\Gamma) = \frac12 \int_{\Gobs} \lvert u_\Gamma -\uobs \lvert^2 \:\d s. 
\end{equation}
The shape derivative of $J(\Gamma)$ is calculated in \cref{prop.sdfaultelas}.

The physical situation at stake is depicted on \cref{fig.fracinvdata} (a); the observed displacement $\uobs: \Gobs \to \R^2$ of the surface is that caused by the known fracture pattern $\Gamma_T$ in there. 
For a given shape $\Gamma$ of the fracture, the displacement $u_\Gamma$ is approximated via the couple $(u_{1,\e},u_{2,\e})$, solution to \cref{eq.appvarffract}, along the lines of \cref{sec.faultdirect}. 
The reconstruction problem \cref{eq.sopbfrac} is known to be very instable, especially when the slip vector vanishes at the endpoints of the fracture and only one measurement is used, see the discussion in \cite{aspri2025shape} where this phenomenon is observed even though the reconstructed fractured is constrained to be a line segment.
Here, we content ourselves with observing that the reconstructed fracture roughly has a similar inclination as the exact one, and that the objective function $J(\Gamma)$ is dutifully minimized: its value is decreased by 500 times within 100 iterations.

\begin{figure}[!ht]
    \centering
 \begin{tabular}{cc}
\begin{minipage}{0.45\textwidth}
\centering
\begin{overpic}[width=1.0\textwidth]{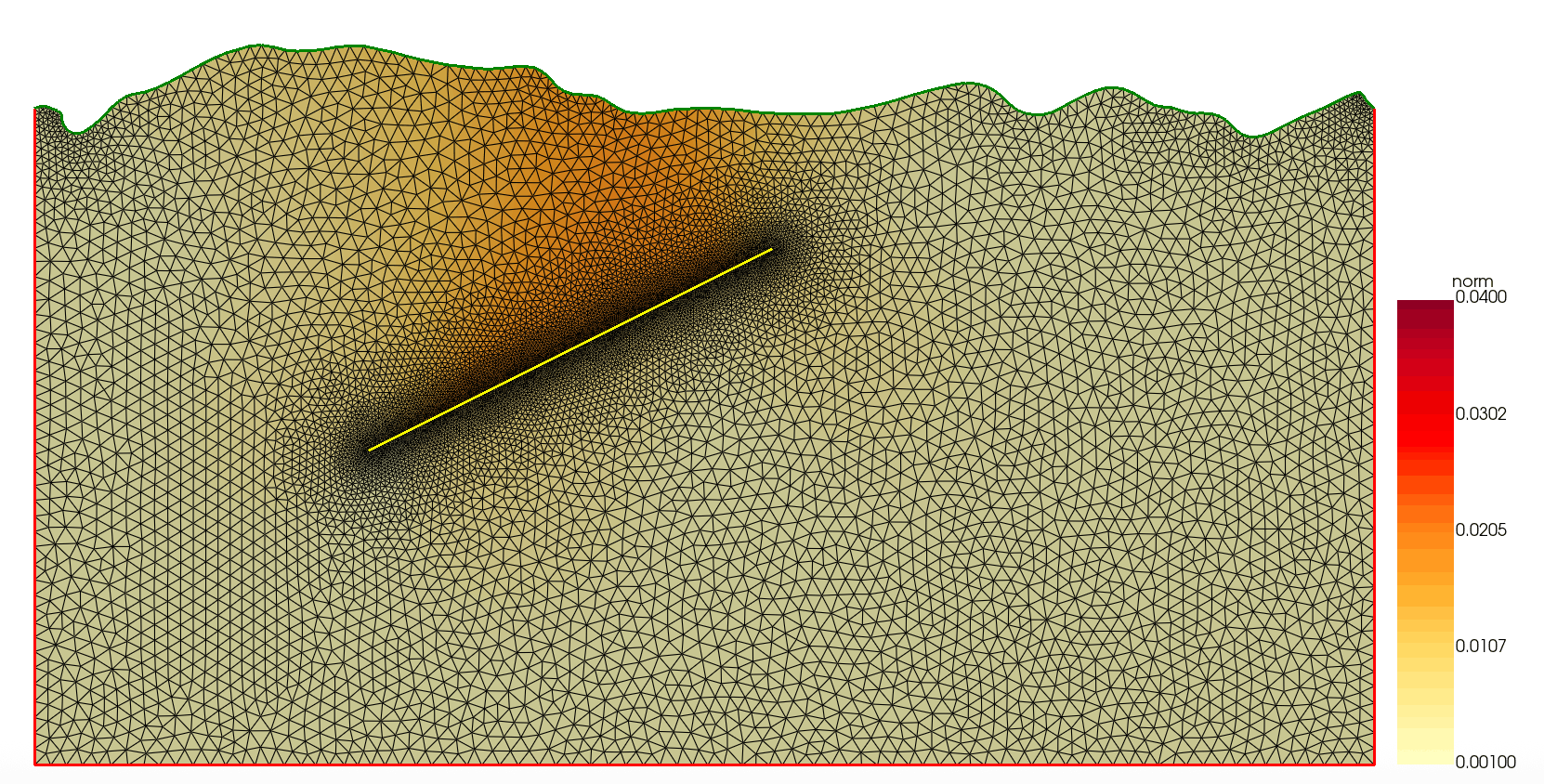}
\put(0,-3){\fcolorbox{black}{white}{$a$}}
\end{overpic}
\end{minipage} & 
\begin{minipage}{0.45\textwidth}
\centering
\begin{overpic}[width=0.7\textwidth]{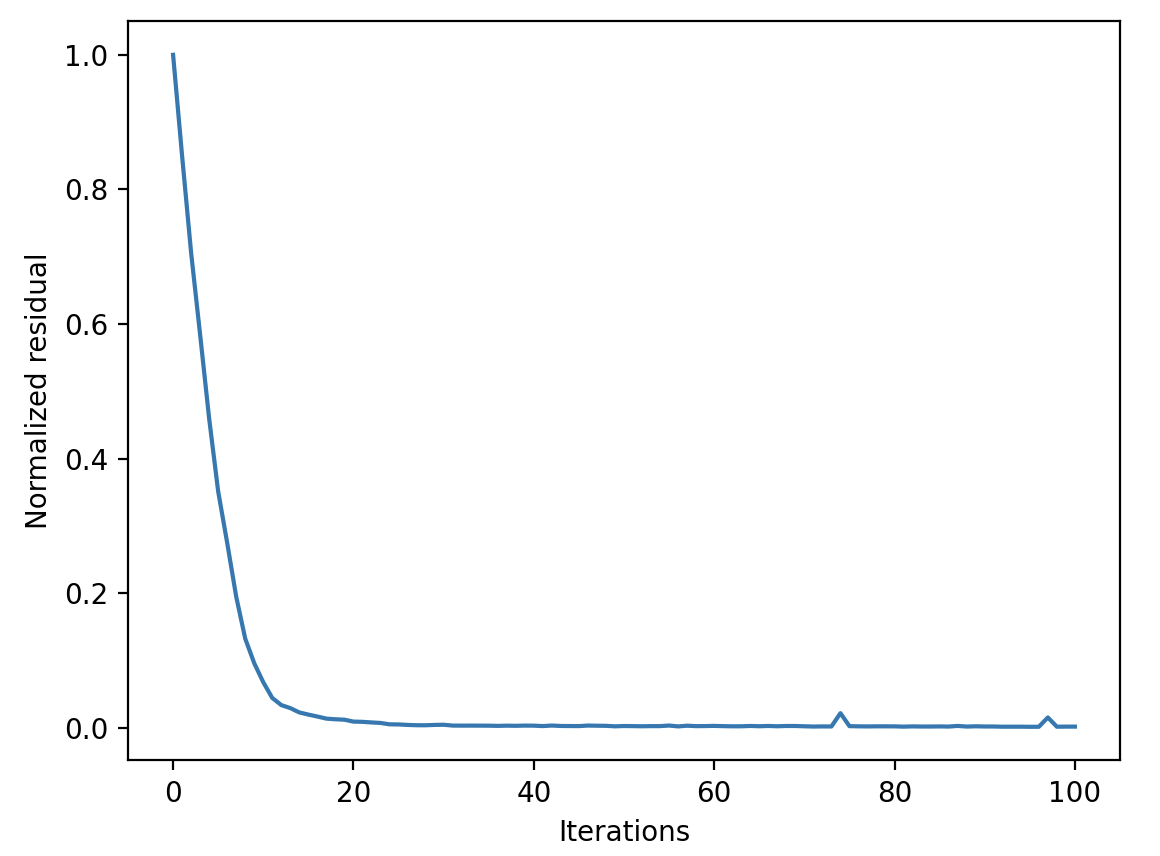}
\put(0,-3){\fcolorbox{black}{white}{$b$}}
\end{overpic}
\end{minipage} 
\end{tabular}  
 \caption{\it (a) Target fracture pattern $\Gamma_T$ in the example of \cref{sec.IPfracture}; (b) Convergence history of the normalized values $J(\Gamma^n)/J(\Gamma^0)$ of the objective function.}
  \label{fig.fracinvdata}
\end{figure}

\begin{figure}[!ht]
    \centering
    \begin{tabular}{cc}
\begin{minipage}{0.45\textwidth}
\centering
\begin{overpic}[width=1.0\textwidth]{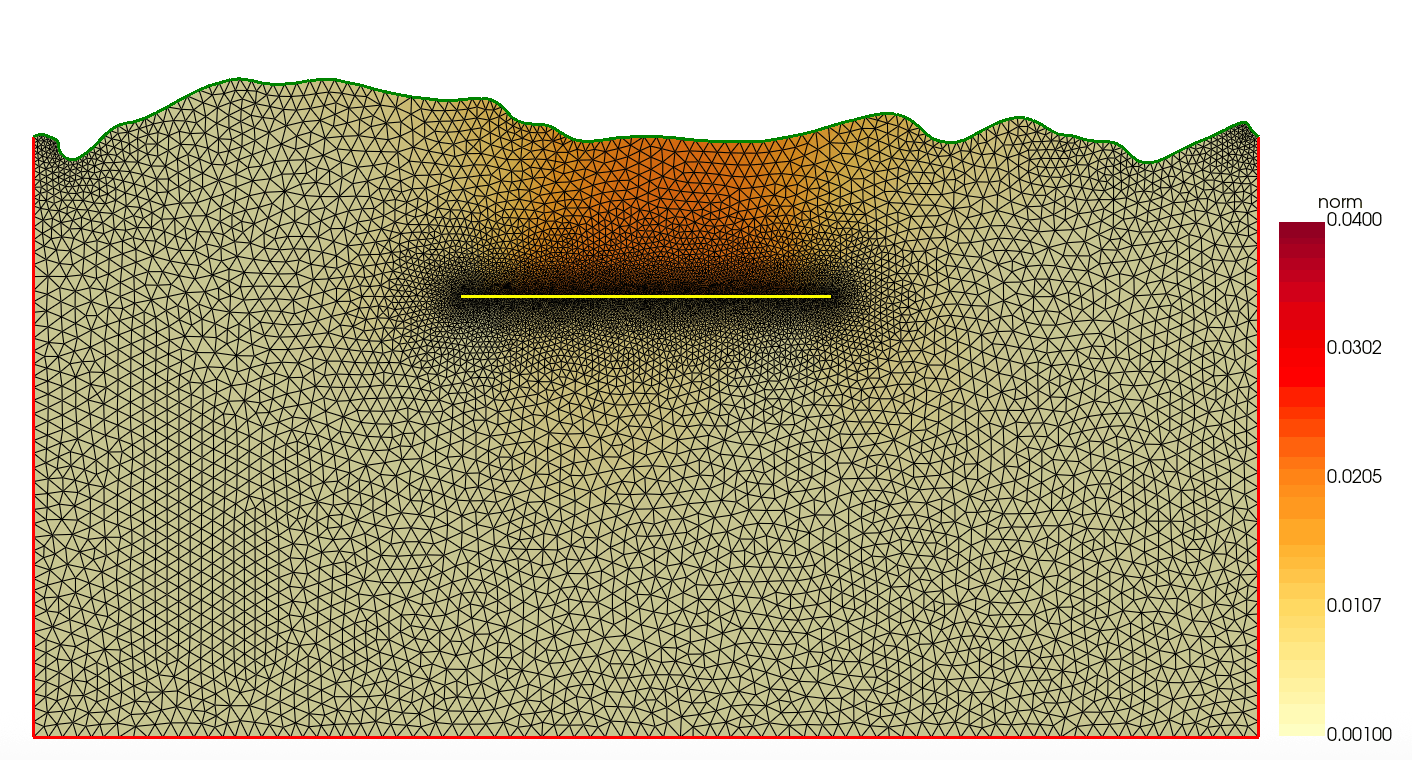}
\put(0,-3){\fcolorbox{black}{white}{$n=0$}}
\end{overpic}
\end{minipage}
&
\begin{minipage}{0.45\textwidth}
\centering
\begin{overpic}[width=1.0\textwidth]{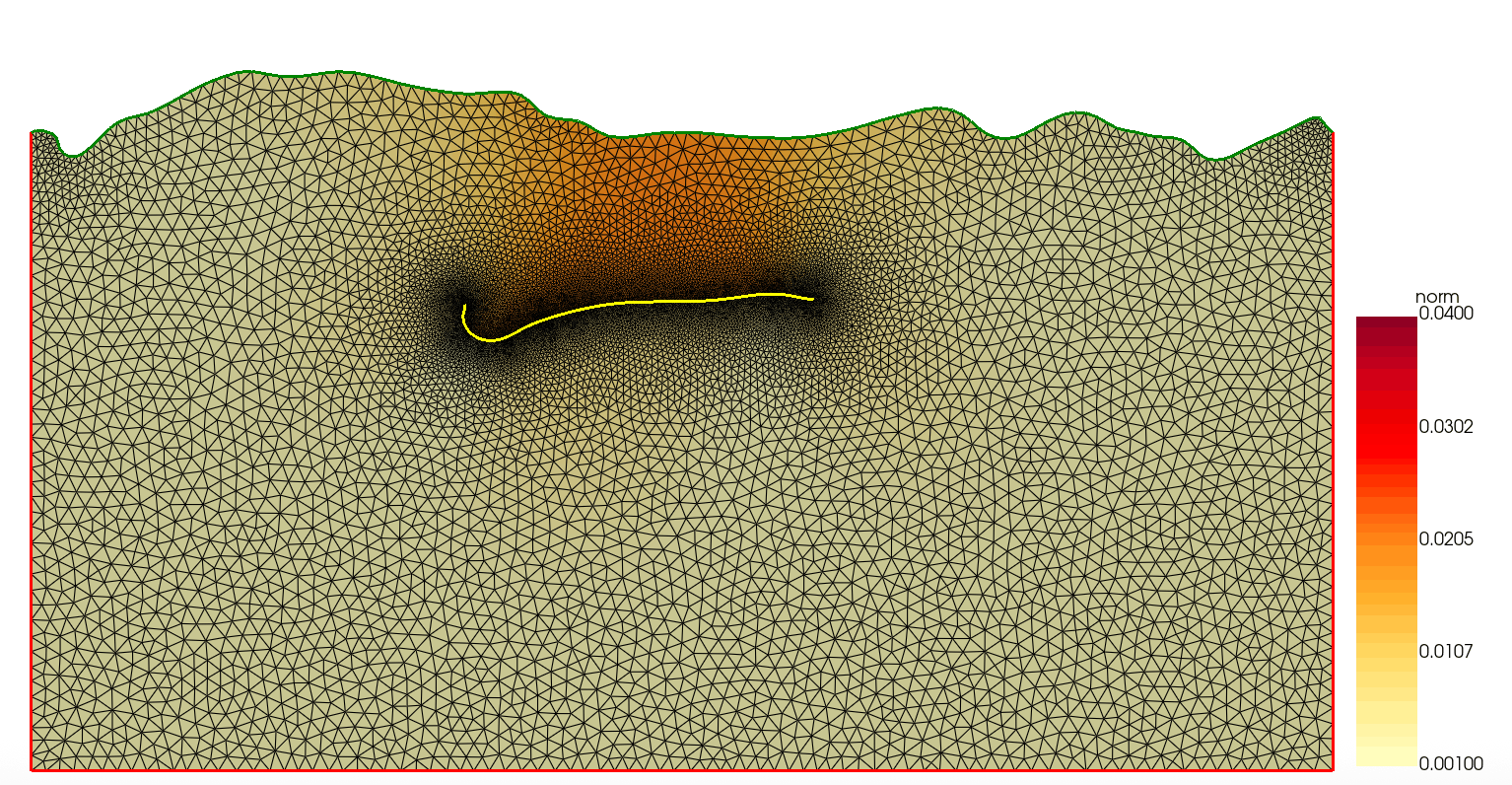}
\put(0,-3){\fcolorbox{black}{white}{$n=5$}}
\end{overpic}
\end{minipage} 
\end{tabular}  
\par\bigskip
 \begin{tabular}{cc}
\begin{minipage}{0.45\textwidth}
\centering
\begin{overpic}[width=1.0\textwidth]{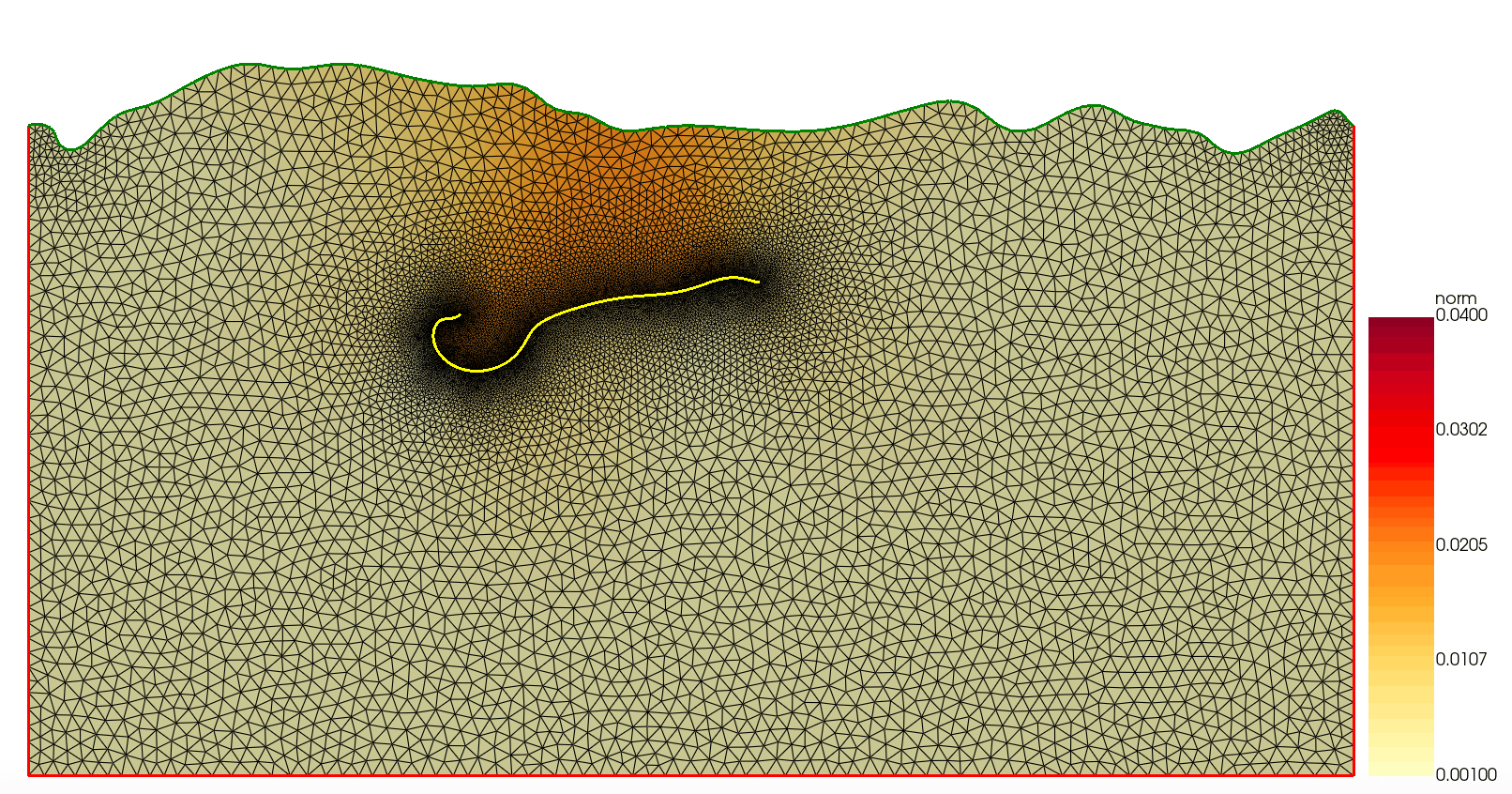}
\put(0,-3){\fcolorbox{black}{white}{$n=10$}}
\end{overpic}
\end{minipage} & 
\begin{minipage}{0.45\textwidth}
\centering
\begin{overpic}[width=1.0\textwidth]{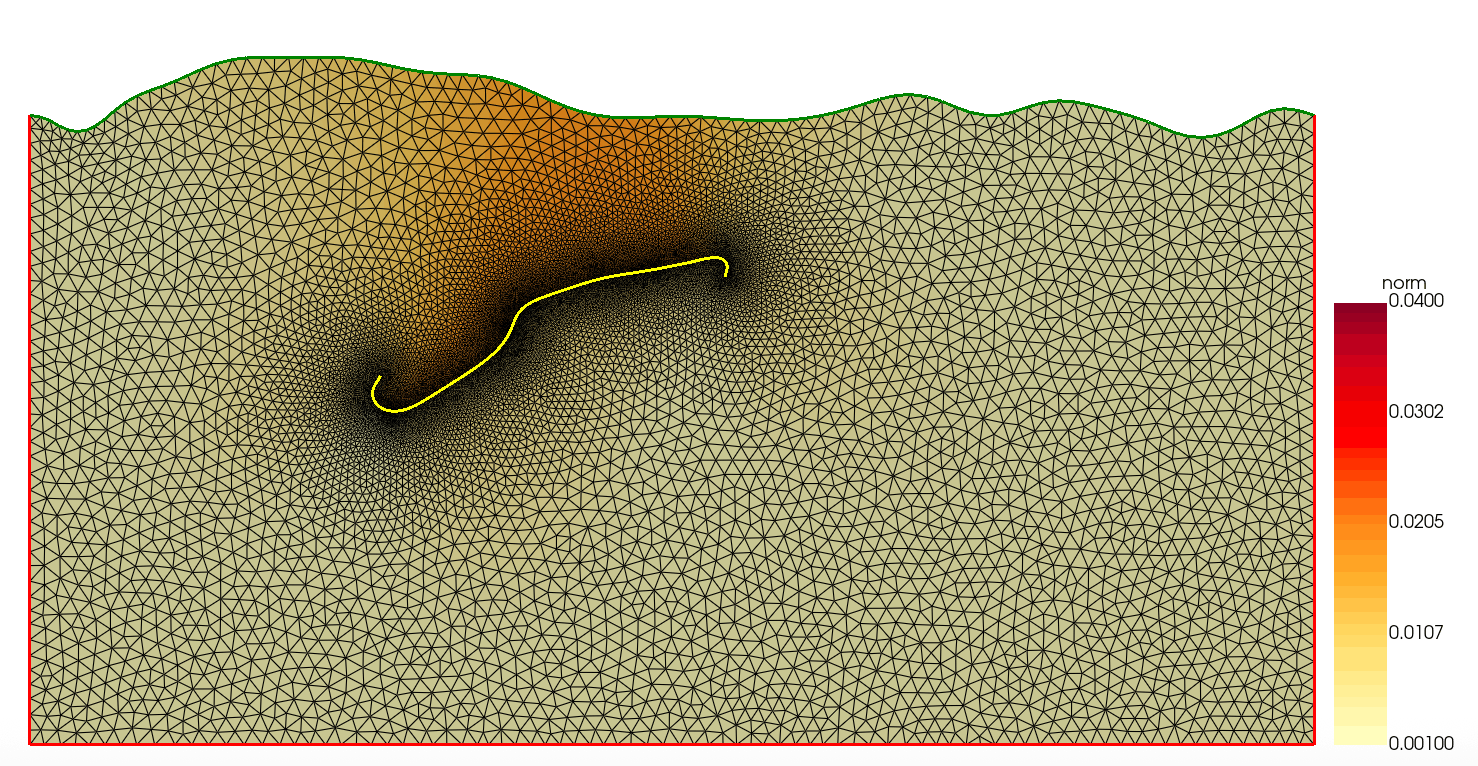}
\put(0,-3){\fcolorbox{black}{white}{$n=100$}}
\end{overpic}
\end{minipage} 
\end{tabular}  
 \caption{\it A few iterates in the fracture detection example of \cref{sec.IPfracture}.}
  \label{fig.fracinv}
\end{figure}

\begin{remark}
Note that, in principle, the slip vector should be made part of the reconstruction, which entails no conceptual change in the above method. 
\end{remark}

\section{Conclusion and perspectives}\label{sec.concl}

\noindent In this article, we have proposed a novel numerical framework for tracking the motion of an evolving open curve, or a collection of such.
This strategy combines two representations of the curve at each iteration of the evolution process: (i) an explicit discretization, as a subset of the edges of the computational mesh, which allows to perform accurate evaluations of its geometric features and precise finite element computations associated to its physical behavior; (ii) an implicit representation, via a variant of the Level Set Method, which allows to capture its evolution in a robust and efficient manner, however dramatic.
We have used our framework to address various problems, such as the simulation of a vortex sheet roll-up, the optimization of the path of the laser ensuring the 3d printing of a shape by the Electron Beam Melting technology, or the reconstruction of a fault in the underground from observational data.

This work is preliminary in many respects and calls for multiple perspectives.
At first, the proposed methodology can be applied to various physical problems, beyond those exemplified in this article:
\begin{itemize}
\item Open curves are ubiquitous in image segmentation, where they can be optimized to best indicate sharp variations of intensity or color, see e.g. \cite{basu2007implicit,leung2009grid}.
\item A similar formalism to that of the path optimization problem discussed in \cref{sec.am} could be applied to the optimal graft of a thin ligament to a structure in order to improve its robustness: our previous works \cite{dapogny2020connection,dapogny2020topolig} indeed reduce this task to the minimization of an anisotropic perimeter functional after calculation of a ``topological ligament expansion'' of a mechanical shape functional of interest. 
\item We aim to apply the methodology in the physical setting of electromagnetism, to optimize the placement of current lines within the ferromagnetic region of an electric motor. 
\end{itemize}

Another lead for improving the proposed framework concerns the handling of one-dimensional structures showing branching or multiple junctions:
its coupling with a strategy for decomposing a branched structure into a collection of manifold curves would allow to simulate complex fracture propagation patterns.

Finally, and perhaps most importantly, as we have mentioned in the introduction, 
the original motivation and natural perspective of this work is to develop an efficient and accurate, body-fitted tracking strategy for the motion of open surfaces in three space dimensions, with applications including the optimization of shells or electromagnetic screens.
Although the implementation effort is then expected to be more intense, the methodology of this article was developed with this ambition in mind, and its extension to this setting does not present any theoretical obstruction. 

\par\bigskip

\noindent \textbf{Acknowledgements.} This work is partially supported by the projects ANR-24-CE40-2216 STOIQUES, ANR-22-CE46-0006
StableProxies and CNRS-MITI DOLMEN.
Part of it was realized while the author was visiting the Department of Mathematics of the University of Trento, whose hospitality is gratefully acknowledged.

\appendix
\section{A few facts from tangential calculus}\label{app.remtgtcalc}

\noindent This appendix gathers some basic facts from differential calculus on a smooth hypersurface of $\R^d$, $d \geq 2$. We mainly follow \cite{henrot2018shape}, 
see also classical books such as \cite{chavel2006riemannian,lang2012fundamentals} for differential calculus on manifolds. \par \medskip

Let $\Gamma$ be an oriented open surface in $\R^d$, $d \geq 2$, whose contour $\Sigma := \partial \Gamma$ is then a $(d-2)$-dimensional closed submanifold of $\R^d$. 
We denote by $n : \Gamma \to \R^d$ the unit normal vector, whose orientation is consistent with that of $\Gamma$. Moreover,

\begin{itemize}
\item The tangential gradient of a function $u : \Gamma \to \R$ of class $\calC^1$ is defined by $\nabla_\Gamma u := \nabla \widetilde u - (\nabla \widetilde u \cdot n ) n$, 
where $\widetilde{u}$ is an arbitrary extension of $u$ to an open neighborhood of $\Gamma$ in $\R^d$.
\item The tangential divergence of a vector field $v : \Gamma \to \R^d$ is defined by $\dv_\Gamma(v) = \dv(\widetilde{v}) - \nabla \widetilde v n \cdot n $,
where $\widetilde v$ is an arbitrary extension of $v$ to an open neighborhood of $\Gamma$ in $\R^d$.
\end{itemize} 

Let us now recall the following formula for changing variables in surface integrals.

\begin{proposition}\label{prop.chgvarintsurf}
Let $\Gamma$ be a smooth open surface of $\R^d$ and let $T : \R^d \to \R^d$ be a diffeomorphism of class $\calC^1$. Then a function $f$ belongs to $L^1(T(\Gamma))$ if and only if $f \circ T$ belongs to $L^1(\Gamma)$, and the following equality holds true:
$$\int_{T(\Gamma)} f \:\d s = \int_\Gamma \lvert \com(\nabla T)n \lvert (f \circ T) \:\d s , $$
where $\com(M)$ is the cofactor matrix of a $d \times d$ matrix and the term $\lvert \com(\nabla T)n \lvert$ is called the tangential Jacobian of $T$.
\end{proposition}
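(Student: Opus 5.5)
The plan is to reduce the statement to the classical area formula for parametrized hypersurfaces, working locally with charts and a partition of unity. Since $\Gamma$ is a smooth $(d-1)$-dimensional submanifold, we cover it by finitely many (or countably many) open pieces $\Gamma_i$, each given by a smooth parametrization $\gamma_i : U_i \subset \R^{d-1} \to \Gamma_i$ of rank $d-1$. A subordinate partition of unity $(\chi_i)$ on $\Gamma$ is transported to $T(\Gamma)$ via $\chi_i\circ T^{-1}$. By linearity it then suffices to prove the identity for $f$ supported in a single patch $T(\Gamma_i)$. Measurability poses no difficulty, because $T$ is a homeomorphism mapping $\Gamma$ onto $T(\Gamma)$.

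In one patch, $T(\Gamma_i)$ is parametrized by $T\circ\gamma_i$. By the very definition of the surface measure,
$$\int_{T(\Gamma)} f \:\d s = \int_{U_i} f(T(\gamma_i(u)))\, \big|\partial_1 (T\circ\gamma_i) \wedge \cdots \wedge \partial_{d-1}(T\circ\gamma_i)\big|\:\d u.$$
The same formula gives $\int_\Gamma g \:\d s = \int_{U_i} g(\gamma_i(u)) \,|\partial_1\gamma_i\wedge\cdots\wedge\partial_{d-1}\gamma_i|\:\d u$. It therefore suffices to establish the pointwise algebraic identity, valid for tangent vectors $\tau_1,\dots,\tau_{d-1}$ spanning $n^\perp$ at a point $x$ and for $M=\nabla T(x)$ invertible:
$$|M\tau_1\wedge\cdots\wedge M\tau_{d-1}| = |\com(M) n|\, |\tau_1\wedge\cdots\wedge\tau_{d-1}|.$$
The chain rule gives $\partial_k(T\circ\gamma_i) = \nabla T\,\partial_k\gamma_i$, so this identity finishes the patch computation with $g = |\com(\nabla T)n|\,(f\circ T)$.

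The main step is this linear-algebra identity. I would identify the $(d-1)$-vector $\tau_1\wedge\cdots\wedge\tau_{d-1}$ with the generalized cross product $N(\tau)$, characterized by $N(\tau)\cdot v = \det(\tau_1,\dots,\tau_{d-1},v)$. This $N(\tau)$ is a multiple of $n$ with norm $|\tau_1\wedge\cdots\wedge\tau_{d-1}|$. For $v=Mw$ one has
$$N(M\tau)\cdot Mw = \det(M)\det(\tau,w) = \det(M)\,N(\tau)\cdot w,$$
hence $M^T N(M\tau) = \det(M) N(\tau)$, i.e. $N(M\tau) = \det(M)M^{-T}N(\tau) = \com(M)N(\tau)$. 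Taking norms gives the identity. Integrability then follows by applying the equality to $|f|$, which yields the stated equivalence between $f\in L^1(T(\Gamma))$ and $f\circ T\in L^1(\Gamma)$. The tangential Jacobian is bounded above and below on compact pieces since $T$ is a $\calC^1$ diffeomorphism, although the equality with $|f|$ alone already suffices. Finally, I would check that the result is independent of the choice of charts, which follows from the usual change of variables in $\R^{d-1}$.
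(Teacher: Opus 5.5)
The paper does not prove this proposition. It recalls it in the appendix as a classical fact of tangential calculus and refers to Henrot--Pierre \cite{henrot2018shape}, where it is derived by the same kind of local-parametrization computation you propose.

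Your main argument is sound. The identity $N(M\tau)=\det(M)M^{-T}N(\tau)=\com(M)N(\tau)$ follows from $\det(M\tau_1,\dots,M\tau_{d-1},Mw)=\det(M)\det(\tau_1,\dots,\tau_{d-1},w)$. Combined with $N(\tau)\in\R n$, $\lvert N(\tau)\rvert=\lvert\tau_1\wedge\cdots\wedge\tau_{d-1}\rvert$ and the chain rule for $T\circ\gamma_i$, it is exactly what makes the patchwise area formulas match.

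One technical point on globalization. Interior charts accumulate at the contour $\Sigma$ unless you use boundary charts, so you may need countably many patches. In that case, prove the identity first for nonnegative measurable $f$ (values in $[0,+\infty]$), summing over the partition of unity by monotone convergence, and only then pass to integrable $f$. ``By linearity'' alone does not cover infinite sums.

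The step that is wrong as written is the integrability claim. Applying the identity to $\lvert f\rvert$ shows only that $f\in L^1(T(\Gamma))$ if and only if $\lvert\com(\nabla T)n\rvert\,(f\circ T)\in L^1(\Gamma)$. It does not by itself give $f\circ T\in L^1(\Gamma)$, so your remark that ``the equality with $\lvert f\rvert$ alone already suffices'' is incorrect.

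To remove the weight you need a two-sided bound $0<c\leq\lvert\com(\nabla T)n\rvert\leq C$ on all of $\Gamma$; bounds on compact pieces are not enough. In the paper's setting this global bound does hold:
\begin{itemize}
\item $\Gamma$ is bounded, so $\overline\Gamma$ is compact.
\item For a unit vector $n$, $\lvert\com(\nabla T)n\rvert$ lies between the extreme singular values of the invertible matrix $\com(\nabla T)=\det(\nabla T)\nabla T^{-T}$.
\item These singular values depend continuously on $x\in\overline\Gamma$, hence are bounded above and away from zero there.
\end{itemize}
In the shape-derivative setting $T=\Id+\theta$ with $\theta$ small in $\calC^{1,\infty}$, $\nabla T$ and its inverse are even uniformly bounded on $\R^d$.

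Without such a bound the unweighted equivalence genuinely fails. Take $\Gamma=\R\times\{0\}$, $T(x)=(x_1^3+x_1,x_2)$ and $f(y)=(1+\lvert y_1\rvert)^{-2/3}$. Then $f\circ T\in L^1(\Gamma)$, while $f\notin L^1(T(\Gamma))$.
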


We now state a well-known integration by parts formula on a surface. 

\begin{proposition}\label{prop.IPPsurf}
Let $\theta : \R^d \to \R^d$ and $f: \R^d \to \R$ be a vector field and a function of class $\calC^1$, respectively. Then, the following integration by parts formula holds true: 
$$ \int_\Gamma \dv_\Gamma (\theta) f \:\d s = \int_\Gamma \kappa f  \theta \cdot n \:\d s -\int_\Gamma \theta \cdot \nabla_\Gamma f \:\d s + \int_\Sigma f \theta\cdot n_\Sigma \:\d\ell,$$
where $\kappa := \dv_\Gamma (n)$ is the mean curvature of $\Gamma$.
\end{proposition}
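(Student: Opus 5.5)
The plan is to reduce the formula to the divergence theorem on $\Gamma$, applied to the tangential part of $f\theta$. First treat the case where $\theta$ is tangent to $\Gamma$. Then write a general field as $\theta = \theta_\Gamma + (\theta\cdot n)n$ with $\theta_\Gamma := \theta - (\theta\cdot n)n$, and handle the normal part through the curvature term.

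\textbf{Step 1 (product rule).} Directly from the definition of $\dv_\Gamma$ through an extension, one has $\dv_\Gamma(f\theta) = f\,\dv_\Gamma(\theta) + \theta\cdot\nabla_\Gamma f$. Indeed, $\dv(f\widetilde\theta) = f\dv\widetilde\theta + \widetilde\theta\cdot\nabla f$, and $\nabla(f\widetilde\theta)n\cdot n = f\nabla\widetilde\theta n\cdot n + (\nabla f\cdot n)(\widetilde\theta\cdot n)$. Subtracting the two gives $f\dv_\Gamma\theta + \theta\cdot(\nabla f - (\nabla f\cdot n)n)$. It therefore suffices to prove, for $w := f\theta$, the identity
$$\int_\Gamma \dv_\Gamma(w)\:\d s = \int_\Gamma \kappa\, w\cdot n\:\d s + \int_\Sigma w\cdot n_\Sigma\:\d\ell.$$

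\textbf{Step 2 (normal part).} Split $w = w_\Gamma + (w\cdot n)n$, using a smooth extension $N$ of $n$, for instance the gradient of the signed distance, which satisfies $\nabla N\, N = 0$ near $\Gamma$. Apply the product rule with $g := w\cdot N$. Since $\nabla_\Gamma g\cdot n = 0$, this gives $\dv_\Gamma(g N) = g\,\dv_\Gamma(n) = \kappa\, w\cdot n$. This produces exactly the curvature term.

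\textbf{Step 3 (tangential part).} It remains to show $\int_\Gamma \dv_\Gamma(w_\Gamma)\:\d s = \int_\Sigma w_\Gamma\cdot n_\Sigma\:\d\ell$. Here $n_\Sigma$ is the unit conormal, which is tangent to $\Gamma$ and normal to $\Sigma$, so $w_\Gamma\cdot n_\Sigma = w\cdot n_\Sigma$. For a tangent field, $\dv_\Gamma$ coincides with the intrinsic Riemannian divergence on $\Gamma$. To check this, compute in an orthonormal frame $(\tau_1,\dots,\tau_{d-1},n)$: the definition gives $\dv_\Gamma w = \sum_i \nabla\widetilde w\,\tau_i\cdot\tau_i$, which is the trace of the covariant derivative once the normal component of $\widetilde w$ vanishes on $\Gamma$. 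The identity is then the classical Stokes theorem on the manifold with boundary $\overline\Gamma$.

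If one prefers to stay within the paper's toolkit, Step 3 can instead be derived from Proposition \ref{prop.chgvarintsurf}. Take the flow $T_t$ of an extension of $w_\Gamma$ and differentiate $|T_t(\Gamma)| = \int_\Gamma |\com(\nabla T_t)n|\:\d s$ at $t=0$. The derivative of the tangential Jacobian is $\dv_\Gamma(w_\Gamma)$. On the other hand, a tangential flow only moves $\Gamma$ along itself, so the area changes only through the boundary, at rate $\int_\Sigma w\cdot n_\Sigma\:\d\ell$.

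\textbf{Main obstacle.} The main obstacle is Step 3, specifically the claim that the extrinsic definition agrees with the intrinsic divergence, independently of the chosen extension. Independence holds because the difference of two extensions vanishes on $\Gamma$, so only its normal derivative survives. That normal derivative is removed by the $-\nabla\widetilde v n\cdot n$ correction in the definition of $\dv_\Gamma$.
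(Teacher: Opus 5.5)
Your argument is correct. The paper gives no proof of this proposition: it is quoted as a ``well-known'' fact of tangential calculus, with a pointer to the shape-optimization textbooks. So there is no competing route to compare with.

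What you give is the standard derivation, and each step checks out:
\begin{itemize}
\item the product rule $\dv_\Gamma(f\theta) = f\,\dv_\Gamma(\theta) + \theta\cdot\nabla_\Gamma f$;
\item the splitting of $w = f\theta$ into a normal part $(w\cdot n)n$, whose tangential divergence is $\kappa\, w\cdot n$;
\item a tangential part, handled by identifying $\dv_\Gamma$ with the intrinsic divergence and applying the divergence theorem on the compact manifold with boundary $\overline\Gamma$, with outward conormal $n_\Sigma$.
\end{itemize}

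There are a few minor points of care.
\begin{itemize}
\item The signed distance to the open surface $\Gamma$ itself is neither smooth nor meaningfully signed near $\Sigma$. The extension $N$ should therefore be the unit normal field of a $\calC^2$ surface $\Gamma'$ slightly extending $\Gamma$ beyond $\Sigma$.
\item The property $\nabla N\, N = 0$ is never actually used. Step 2 only needs $N = n$ on $\Gamma$ together with the extension-independence of $\dv_\Gamma$ that you justify at the end.
\item You need $\Gamma$ to be $\calC^2$ up to $\Sigma$, so that $N$ is $\calC^1$ and $\kappa$ makes sense.
\end{itemize}

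Your flow-based alternative is less self-contained than it looks. The claim that a tangential flow moves $\Gamma$ along itself holds when the extension $w - (w\cdot N)N$ is tangent along such a $\Gamma'$: uniqueness of integral curves then gives $T_t(\Gamma)\subset\Gamma'$. The claim that the area changes at rate $\int_\Sigma w\cdot n_\Sigma\,\d\ell$ then requires estimating the area of the thin strip swept near $\Sigma$. That is doable, but it essentially re-proves the divergence theorem, so the Stokes route is the cleaner one.
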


\section{Calculation of the shape derivative of anisotropic perimeter functionals}\label{app.deranisoper}

\noindent This appendix details the calculation of the shape derivatives of the anisotropic perimeter functionals considered in \cref{sec.aniper}.
Placing ourselves in the general context of a $d$-dimensional ambient space, we consider a functional of the form 
$$ J(\Gamma ) =\int_\Gamma \varphi(x, n_\Gamma(x)) \:\d s,$$
depending on an open smooth hypersurface $\Gamma$ through its unit normal vector $n_\Gamma : \Gamma \to \R^d$, that we simply denote by $n$ when the concerned hypersurface is clear. Here, $\varphi : \R^d_x \times \R^d_n \times \R$ is a given smooth function;
the gradients of the partial mappings $x \mapsto \varphi(x,n)$ and $n \mapsto \varphi(x,n)$ are respectively denoted by $\nabla_x \varphi (x,n)$ and $\nabla_n \varphi(x,n)$.

\begin{proposition}
The functional $J(\Gamma)$ is shape differentiable at any smooth, open hypersurface $\Gamma$, and its derivative reads:
$$J^\prime(\Gamma)(\theta) = \int_\Gamma \kappa \varphi(x,n(x)) \:\theta\cdot n \:\d s +  \int_\Sigma  \varphi(x,n(x)) \theta\cdot n_\Sigma \:\d s  +  \int_\Gamma \frac{\partial \varphi}{\partial n_x}(x,n(x)) \theta\cdot n \:\d s  \\
 - \int_\Gamma \nabla_n\varphi(x,n(x)) \cdot \nabla_\Gamma(\theta\cdot n)\:\d s. $$
\end{proposition}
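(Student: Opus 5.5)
Following the paper's approach, we pull the integral back to the fixed surface $\Gamma$ with \cref{prop.chgvarintsurf}, differentiate with respect to $\theta$ at $0$, and then rearrange the resulting terms with the integration by parts formula of \cref{prop.IPPsurf}.

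\textbf{Step 1: pull back to $\Gamma$.} For $\theta$ small in $\Cinfty$, set $T=\Id+\theta$ and $\Gamma_\theta = T(\Gamma)$. \cref{prop.chgvarintsurf} gives
$$J(\Gamma_\theta) = \int_\Gamma \lvert \com(\nabla T)n\lvert\, \varphi\big(T(x), n_{\Gamma_\theta}(T(x))\big)\:\d s.$$
The transported normal has the classical expression
$$n_{\Gamma_\theta}\circ T = \frac{\com(\nabla T)n}{\lvert \com(\nabla T) n\lvert} = \frac{(\nabla T)^{-T}n}{\lvert (\nabla T)^{-T} n\lvert}.$$
Both this map and $\theta\mapsto \lvert\com(\nabla T)n\lvert$ are Fréchet differentiable from $\Cinfty$ into $L^\infty(\Gamma)$. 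Their derivatives at $0$ are the standard ones:
\begin{itemize}
\item $\theta\mapsto \dv_\Gamma\theta$ for the tangential Jacobian;
\item $\theta \mapsto -(\nabla\theta)^T n + ((\nabla\theta)^Tn\cdot n)n = -\nabla_\Gamma(\theta\cdot n) + (\nabla_\Gamma n)\theta$ for the normal.
\end{itemize}
The second identity uses $\nabla_\Gamma(\theta\cdot n) = \nabla_\Gamma\theta^T n + \nabla_\Gamma n^T\theta$ together with the symmetry of $\nabla_\Gamma n$. Since $\varphi$ is smooth, the chain rule then yields differentiability of $J$ and
$$J'(\Gamma)(\theta) = \int_\Gamma \Big(\dv_\Gamma(\theta)\varphi + \nabla_x\varphi\cdot\theta + \nabla_n\varphi\cdot\big(-\nabla_\Gamma(\theta\cdot n) + (\nabla_\Gamma n)\theta\big)\Big)\:\d s,$$
with $\varphi$ and its partial derivatives evaluated at $(x,n(x))$.

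\textbf{Step 2: integrate by parts.} We apply \cref{prop.IPPsurf} to the first term with $f = \varphi(x,n(x))$, which gives
$$\int_\Gamma \kappa\varphi\,\theta\cdot n\:\d s + \int_\Sigma \varphi\,\theta\cdot n_\Sigma\:\d\ell - \int_\Gamma \theta\cdot\nabla_\Gamma[\varphi(x,n(x))]\:\d s.$$
By the chain rule,
$$\nabla_\Gamma[\varphi(x,n(x))] = \nabla_\Gamma^x\varphi + (\nabla_\Gamma n)^T \nabla_n\varphi,$$
where $\nabla_\Gamma^x\varphi$ denotes the tangential part of $\nabla_x\varphi$.

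\textbf{Step 3: cancel and collect.} The term $-\theta\cdot(\nabla_\Gamma n)^T\nabla_n\varphi$ cancels exactly against $\nabla_n\varphi\cdot(\nabla_\Gamma n)\theta$. The term $-\theta\cdot\nabla^x_\Gamma\varphi$ combines with $\nabla_x\varphi\cdot\theta$ to leave only the normal part $\frac{\partial\varphi}{\partial n_x}\,\theta\cdot n$. What remains is precisely the stated formula, with the surviving term $-\int_\Gamma \nabla_n\varphi\cdot\nabla_\Gamma(\theta\cdot n)\:\d s$.

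\textbf{Main obstacle.} The delicate point is the derivative of the normal in Step 1. We need both the identity $\nabla_\Gamma(\theta\cdot n) = \nabla_\Gamma\theta^T n + \nabla_\Gamma n\,\theta$, which relies on the symmetry of the second fundamental form, and the matching of the transposes so that the cancellation in Step 3 is exact.

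A caveat concerns whether $\nabla_n\varphi$ may have a normal component. It multiplies vectors that are tangent, since the normal stays unit, so only the tangential part matters and the formula is unaffected.

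Rigour in the Fréchet sense follows from the differentiability of $\theta\mapsto(\nabla T)^{-T}$ and of the norm away from zero.
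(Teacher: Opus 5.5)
Your proof is correct and follows essentially the same route as the paper. Both arguments pull back to $\Gamma$ via the tangential Jacobian and differentiate it together with the transported normal $\com(\I+\nabla\theta)n/\lvert\com(\I+\nabla\theta)n\rvert$. Both then integrate the $\dv_\Gamma(\theta)\varphi$ term by parts on $\Gamma$, and use $\nabla n\, n=0$ with the symmetry of $\nabla n$ to produce $-\nabla_\Gamma(\theta\cdot n)$. The only difference is the order: you apply the product rule for $\nabla_\Gamma(\theta\cdot n)$ before integrating by parts and let it cancel the chain-rule term $(\nabla_\Gamma n)^T\nabla_n\varphi$, whereas the paper integrates by parts first and then recombines $-\nabla\theta^T n+(\nabla\theta n\cdot n)n-\nabla n\,\theta_\Gamma$ into $-\nabla_\Gamma(\theta\cdot n)$.
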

\begin{proof}
For a given perturbation $\theta$, it holds:
$$ J(\Gamma_\theta) =\int_{\Gamma_\theta} \varphi(x, n_{\Gamma_\theta}(x)) \:\d s,$$
and so a change of variables in terms of the mapping $(\Id + \theta)$ yields: 
\begin{equation}\label{eq.Jthetaanisoper}
J(\Gamma_\theta) =\int_{\Gamma} \lvert \com (\I + \nabla \theta) n \lvert \varphi(x + \theta(x), n_{\Gamma_\theta}(x + \theta(x))) \:\d s, \text{ where } n \equiv n_\Gamma.
\end{equation}
We now rely on the following formula for the transported version of the normal vector field: 
$$n_{\Gamma_\theta}(x+ \theta(x)) = \frac{\com(\I + \nabla \theta) n(x)}{\lvert \com(\I + \nabla \theta) n(x) \lvert} , \quad x \in \Gamma. $$
On a different note, the well-known formula $\com(M) = \det(M) M^{-T}$ easily implies the following expansion:
\begin{equation}\label{eq.dertgtJac}
\lvert \com (\I + \nabla \theta) n \lvert = \dv_\Gamma (\theta) + \o(\theta),
\end{equation}
where $ \dv_\Gamma (\theta) :=  \dv(\theta) - \nabla \theta n \cdot n$ is the tangential divergence of $\theta$. It follows that:
$$ n_{\Gamma_\theta}(x+ \theta(x))  = n -\nabla\theta^Tn + (\nabla \theta n \cdot n )n + \o(\theta).$$ 
Hence, taking derivatives in \cref{eq.Jthetaanisoper}, we obtain: 
$$J^\prime(\Gamma)(\theta) = \int_\Gamma \dv_\Gamma(\theta) \varphi(x,n(x)) \:\d s  +  \int_\Gamma \nabla_x\varphi(x,n(x)) \cdot \theta \:\d s   + \int_\Gamma \nabla_n\varphi(x,n(x)) \cdot \left( -\nabla\theta^T n+ (\nabla \theta n \cdot n) n\right)\:\d s.  $$
We now integrate by parts on the boundary in the first integral of the above right-hand side thanks to \cref{prop.IPPsurf}. Denoting by $\theta_\Gamma = \theta - (\theta\cdot n)n$ the tangential component of $\theta$, this yields:
\begin{multline}\label{eq.T1appB}
J^\prime(\Gamma)(\theta) = \int_\Gamma \kappa \varphi(x,n(x)) \:\theta\cdot n \:\d s +  \int_\Sigma  \varphi(x,n(x)) \theta\cdot n_\Sigma \:\d s  +  \int_\Gamma \nabla_x\varphi(x,n(x)) \cdot (\theta - \theta_\Gamma) \:\d s  \\
 + \int_\Gamma \nabla_n\varphi(x,n(x)) \cdot \left( -\nabla\theta^T n+ (\nabla \theta n \cdot n) n - \nabla n \theta_\Gamma \right)\:\d s. 
\end{multline}
In order to treat the final term, we remark that $\nabla n \theta_\Gamma = \nabla n \theta$, since $\nabla n n = 0$. 
Hence, we obtain: 
\begin{equation}\label{eq.T2appB}
 \begin{array}{>{\displaystyle}cc>{\displaystyle}l} 
 -\nabla\theta^T n+ (\nabla \theta n \cdot n) n - \nabla n \theta_\Gamma &=&  (\nabla \theta n \cdot n) n - \nabla (\theta\cdot n)\\
 &=&( \nabla(\theta\cdot n)\cdot n ) n - \nabla (\theta\cdot n) \\
 &=& - \nabla_\Gamma(\theta\cdot n).
 \end{array}
 \end{equation}
 Combining \cref{eq.T1appB,eq.T2appB}, we arrive at the desired formula.
 \end{proof} 
%
%

\section{Additional details about the 2d vortex sheet evolution problem}\label{app.vortex}

\noindent This appendix expands a little on the vortex sheet roll-up example considered in \cref{sec.vortex} and its numerical treatment. 
The mathematical model is described in a formal way in \cref{app.vormod} and a few additional details about its practical implementation are provided in \cref{app.vorimp}. 

\subsection{Mathematical model of the vortex sheet evolution in 2d}\label{app.vormod}

\noindent A vortex sheet is a phenomenon that may develop within an incompressible and nearly inviscid fluid, i.e. when the Reynolds number is very high. 
As we have mentioned in \cref{sec.vortex}, it takes the form of a curve (in 2d) or a surface (in 3d) $\Gamma(t)$ across which the fluid velocity ``slips'', i.e. its normal component is continuous, but its tangential component is discontinuous. 
Such a structure has its own dynamics: it expands and rolls up under the effect of the complex velocity patterns that it itself creates.
We refer to \cite{chorin1990mathematical,kundu2024fluid} for general references about fluid mechanics, to \cite{girault2012finite,temam2001navier} about its mathematical framework, and to \cite{cottet2001vortex,majda2002vorticity,marchioro2012mathematical,saffman1992vortex} about the specific subject of vortex patterns.

Formally, let us assume that the fluid under scrutiny is contained in the infinite plane $\R^2$, i.e. we neglect the boundary effects induced by the use of a fixed computational domain $D$.
We denote by $u : \R_t \times \R_x^2 \to \R^2$ and $p:\R_t \times \R_x^2 \to \R$ the fluid velocity and pressure, respectively. 
Denoting by $T$ the final time of the study, the couple $(u,p)$ is solution to the inviscid, incompressible Navier-Stokes equations:
\begin{equation}\label{eq.NS} 
\left\{
\begin{array}{cl}
\frac{\partial u}{\partial t} + \nabla u u + \nabla p = 0 & \text{for } (t,x) \in (0,T) \times \R^2, \\
\dv(u) = 0 & \text{for } (t,x) \in (0,T) \times \R^2, \\
u(0,x) = u_0(x) & \text{for } x \in \R^2.
\end{array}
\right.
\end{equation}
The first equation in \cref{eq.NS} accounts for the balance of momentum within the fluid, while the second one accounts for its incompressibility;
the initial velocity profile $u_0: \R^2 \to \R^2$ is assumed to be known. 
 
A handful quantity in the description of this situation is the vorticity of the fluid $\omega : \R_t \times \R_x^2 \to \R$, which is defined by:
\begin{equation}\label{eq.omcurl}
\omega(t,x) = \curl(u)(t,x) := \frac{\partial u_2}{\partial x_1}(t,x) - \frac{\partial u_1}{\partial x_2} (t,x).
\end{equation}
Intuitively, at each time $t$, $\omega(t,x)$ accounts for the infinitesimal rotating motion of the fluid around $x$. 
By taking the curl of the balance of momentum relation and using the incompressibility condition $\dv(u) = 0$, an elementary calculation yields the following equation about $\omega$:
$$\frac{\partial \omega}{\partial t} + u\cdot \nabla \omega =0, $$
which expresses that the vorticity is transported along with the fluid particles. 

Another important quantity in the description of vortex sheets is the stream function $\psi: \R_t \times \R_x^2 \to \R$ of the fluid. 
Since $\dv(u) = 0$, the Helmholtz decomposition implies that there exists a function $\psi$, which is unique up to a constant, such that:
\begin{equation}\label{eq.defpsi}
u = \nabla^\perp \psi := \left(-\frac{\partial \psi}{\partial x_2},\frac{\partial\psi}{\partial x_1}\right).
\end{equation}
Combining this identity with the definition \cref{eq.omcurl} of $\omega$, we arrive at:
\begin{equation}\label{eq.Deltapsi}
\Delta \psi = \omega. 
\end{equation}\par\medskip 

Let us now assume that a vortex sheet is present within the fluid, that takes the form of an oriented open curve $\Gamma(t)$. We denote by $n_t$ the unit normal vector to $\Gamma(t)$
and by $\tau_t$ its unit tangent vector, such that $(\tau_t,n_t)$ is a direct orthonormal frame of the plane, i.e. $n_t = \tau_t^\perp := (-\tau_{t,2,}\tau_{t,1})$, see \cref{fig.vorset} (b).
Let us recall from \cref{sec.fault} that, when $\alpha$ is a quantity which is smooth on either side of $\Gamma(t)$, but possibly discontinuous across $\Gamma(t)$, we denote by
$$ \alpha^\pm(x) = \lim\limits_{s \to 0 \atop s > 0} \alpha(x \pm s n_t(x)) \text{ and } \left[ \alpha \right] (x) = \alpha^+(x) - \alpha^-(x)$$
the one-sided limits and the jump of $\alpha$ at $x \in \Gamma(t)$, respectively.
Mathematically, the vortex sheet pattern is characterized by the fact that, at each time $t$, the vorticity is concentrated on $\Gamma(t)$, i.e.
$$\omega(t,x) = \gamma(t,x) \delta_{\Gamma(t)}, $$
where $\delta_{\Gamma}$ is the distribution accounting for integration over $\Gamma(t)$, and the scalar factor $\gamma(t,x)$ is called the strength of the vortex. 
This relation has to be understood in the sense of distributions in $\R^2$: for any open subset $U \subset \R^2$, it holds:
\begin{equation}\label{eq.locvor}
\int_U \omega(t,x) \:\d x = \int_{\Gamma(t) \cap U} \gamma(t,x) \:\d s.
\end{equation}
Substituting \cref{eq.Deltapsi} for $\omega(t,x)$ in \cref{eq.locvor} and integrating by parts, we arrive at the following boundary-value problem for the stream function $\psi(t,\cdot)$ at each time $t>0$: 
\begin{equation}\label{eq.bvppsi}
\left\{
\begin{array}{cl}
\Delta \psi = 0 & \text{in } \R^2 \setminus \Gamma(t), \\
-\left[ \frac{\partial \psi}{\partial n}\right] = \gamma(t,\cdot) & \text{on } \Gamma(t),
\end{array}
\right.
\end{equation}
where the sign comes from the chosen orientation in the definition of the jump across $\Gamma(t)$. 
It follows from \cref{eq.bvppsi} that $\psi(t,\cdot)$ can be expressed as a single layer potential, 
$$\psi(t,x) = -\int_{\Gamma(t)} \gamma(t,y) G(x,y) \:\d s(y), $$
where $G(x,y) = \frac{1}{2\pi} \log\lvert x - y \lvert$ is the fundamental solution of the Laplace operator in 2d, see for instance \cite{folland1995introduction,mclean2000strongly} about the topic of potential theory. 
Using once again the relation \cref{eq.defpsi} between $u$ and $\psi$, we arrive at the following representation formula for $u$ in terms of $\gamma$: 
$$u(t,x) = \int_{\Gamma(t)} \gamma(t,y) K(x,y) \:\d s(y), \text{ where } K(x,y) := -\frac{1}{2\pi} \frac{ (x-y)^\perp}{\lvert x - y\lvert^2}. $$
The classical jump relations for the single layer potential readily imply that $u$ has continuous normal component across $\Gamma(t)$, but that its tangential component is discontinuous: 
\begin{equation}\label{eq.jump}
\left[u\cdot n_t \right] = 0 ,\text{ and } (u \cdot \tau_t)^\pm= \pm \frac{1}{2} \gamma(t,x) + \pv \int_{\Gamma(t)} \gamma(t,y) K(x,y) \cdot \tau_t(x) \:\d s(y), 
\end{equation}
where the last integral is understood in the sense of a Cauchy principal value, see again \cite{folland1995introduction,mclean2000strongly}. In particular, this shows that the vortex strength $\gamma(t,x)$ coincides with the jump $-[u\cdot \tau_t]$ in tangential velocity. 
\par\medskip 

Let us now describe the evolution of $\Gamma(t)$. It can be shown that the consistency of the latter with the inviscid Navier-Stokes equations \cref{eq.NS} for the surrounding fluid imposes that the velocity field driving its motion should be equal to the average of the one-sided values of the fluid velocity, see Chap. 6 of \cite{marchioro2012mathematical}. Using the jump relations \cref{eq.jump}, this means that the velocity of $\Gamma(t)$, that we still denote by $u(t,x)$ with a small abuse of notation, is given by the so-called Birkhoff-Rott equation: 
\begin{equation}\label{eq.pvuvortex}
 u(t,x) =  \pv  \int_{\Gamma(t)} \gamma(t,y) K(x,y) \:\d s(y).
 \end{equation}
To complete this model, we need to characterize the evolution of the vortex strength $\gamma(t,x)$. This task involves another conservation principle: the total vortex strength contained in any portion of $\Gamma(t)$ is conserved during the motion. 
Mathematically, for any $s >0$, let us denote by $t\mapsto X_s(t,x)$ the characteristic curve of the velocity field $u(t,x)$ emerging from $x$ at time $s$, which is the unique solution to the following ordinary differential equation, see \cref{eq.characcurveexpl}:
\begin{equation}\label{eq.odecharacbis} 
\left\{
\begin{array}{cl}
\frac{\d X_s}{\d t}(t,x) = u(t,X_s(t,x)) & \text{for } t \in (0,T), \\
X_s(s,x) = x.&
\end{array}
\right.
\end{equation}
Mathematically, the above conservation principle means that, for any region $G \subset \Gamma(0)$, it holds:
$$\frac{\d }{\d t} \left( \int_{X_0(t,G)} \gamma(t,y) \:\d s(y) \right) = 0.$$
Using the change of variables in surface integrals recalled in \cref{prop.chgvarintsurf}, this rewrites: 
$$\frac{\d }{\d t} \left( \int_{G} \lvert \com\nabla X_0(t,y) n_t(y) \lvert  \gamma(t,X_0(t,y)) \:\d s(y) \right) = 0.$$
Finally, since this property holds for any subset $G \subset \Gamma(0)$, the quantity $ \lvert \com\nabla X_0(t,y) n_t(y) \lvert  \gamma(t,X_0(t,y)) $ is conserved, and so:
\begin{equation}\label{eq.circconserv}
\text{For all } x \in \Gamma(0), \quad \lvert \nabla X_0(t,x) \tau_0(x) \lvert \gamma(t,X_0(t,x)) = \gamma(0,x),
\end{equation}
where we have used the following elementary calculation: 
$$\lvert \com\nabla X_0(t,x) n_0(x) \lvert  = \lvert \nabla X_0(t,x) \tau_0(x) \lvert. $$
Equivalently, \cref{eq.circconserv} has the following equivalent backward expression: 
\begin{equation}\label{eq.circconservback}
\text{For all } y \in \Gamma(t), \quad \gamma(t,y) = \frac{\gamma(0,X_t(0,y))}{\lvert \nabla X_0(t,X_t(0,y)) \tau_0(X_t(0,y))\lvert} = \frac{\gamma(0,X_t(0,y))}{\lvert \nabla X_t(0,y)^{-1} \tau_0(X_t(0,y))\lvert}. 
\end{equation}
Finally, the equations \cref{eq.pvuvortex,eq.circconserv} (or \cref{eq.circconservback}) completely characterize the dynamics of the vortex sheet $\Gamma(t)$.
\par\medskip 

\subsection{A few implementation details}\label{app.vorimp}

\noindent The numerical implementation of the above model for the simulation of vortex sheets raises a few practical issues, that we broach in this section.

\subsubsection{Update of the vortex strength}

\noindent At each iteration $n=0,\ldots$ of the evolution, the vortex strength $\gamma(t^n,x)$ is discretized at the vertices of the line mesh $\calL^n$ of $\Gamma(t^n)$, whose edges explicitly appear in the mesh $\calT^n$ of $D$, as one key feature of our body-fitted tracking method, see \cref{sec.algo}. The calculation of the updated values $\gamma(t^{n+1},\cdot)$ at the vertices of the mesh $\calL^{n+1}$ for $\Gamma(t^{n+1})$ from those of $\gamma(t^n,\cdot)$ on $\Gamma(t^n)$ is based on the formula \cref{eq.circconservback}, which can  be given an iterative flavor: 
\begin{equation}\label{eq.vorstrenupdate}
\text{For each } y \in \Gamma(t^{n+1}), \quad \gamma(t^{n+1},y) = \frac{\gamma(t^n,(T^{-n}(y)))}{\lvert \nabla T^n (T^{-n}(y)) \tau(T^{-n}(y))\lvert},
\end{equation}
where we have used the shortcuts 
$$T^n = X_{t^n}(t^{n+1},\cdot) : \Gamma(t^n) \to \Gamma(t^{n+1}), \text{ and } T^{-n} = (T^n)^{-1}:  \Gamma(t^{n+1}) \to \Gamma(t^{n}),$$
for the flow of $u(t,x)$ between $t^n$ and $t^{n+1}$ and its inverse, respectively. 
The update of $\gamma(t^n,\cdot)$ into $\gamma(t^{n+1},\cdot)$ from this formula thus requires to: 
\begin{enumerate}
\item Calculate the position $T^{-n}(y)$ on $\Gamma(t^n)$ associated to each vertex $y$ on $\calL^{n+1}$; 
\item Calculate the tangential derivative of the flow mapping $T^n$ on $\Gamma(t^n)$;
\item Assemble the expression \cref{eq.vorstrenupdate} from these data.  
\end{enumerate} 

In numerical practice, the flow mapping $T^n$ (resp. its inverse $T^{-n}$) can be calculated by solving the ordinary differential equation \cref{eq.odecharacbis} (resp. its inverse, forward expression) starting from each vertex of $\Gamma(t^{n+1}$) (resp. of $\Gamma(t^n)$). However, when evaluating \cref{eq.vorstrenupdate}, it is crucial to ensure that the particle positions $T^{-n}(y) \in \Gamma(t^n)$ attached to the vertices $y$ of $\calL^{n+1}$ belong exactly to the mesh $\calL^n$ of $\Gamma(t^n)$, since $\gamma(t^n,\cdot)$ is solely defined on this mesh. 
To achieve this, we rely on the following remark about the transformation of the arc length through a diffeomorphism. Let $\Gamma$ be a smooth 2d curve with endpoints $c_0$, $c_1$;  
the normalized arc length function $s_\Gamma: \Gamma \to [0,1]$ reads: 
$$\forall x \in \Gamma, \quad s_\Gamma(x) = \frac{1}{\lvert \Gamma \lvert }\int_{\Gamma_{c_0,x}} \:\d s, $$
where $\lvert \Gamma \lvert $ is the length of $\Gamma$ and we recall that $\Gamma_{c_0,x}$ stands for the portion of curve comprised between $c_0$ and $x$. 
Let now $T : \R^2 \to \R^2$ be a smooth diffeomorphism. This function then transforms as follows under the effect of $T$: 
$$s_{T(\Gamma)}(T(x)) = \frac{1}{\lvert T(\Gamma) \lvert} \int_{T(\Gamma)_{T(c_0),T(x)}} \:\d s = \frac{1}{\lvert T(\Gamma) \lvert} \int_{\Gamma_{c_0,x}} \lvert \com(\nabla T) n \lvert \:\d s  $$
Applying this observation to the mapping $T^{-n}$, it is possible to identify the arc length on $\Gamma(t^n)$ associated to the particle position $T^{-n}(y)$ on the mesh $\calL^n$ of $\Gamma(t^n)$ associated to each vertex $y$ of $\calL^{n+1}$. 

\subsubsection{Calculation of the velocity field}

\noindent At each iteration $n=0,\ldots$ of the evolution, the curve $\Gamma(t^n)$ is discretized by a line mesh $\calL^n$; once the vortex strength $\gamma(t^n,x)$ is computed at the vertices of $\calL^n$, 
the formula \cref{eq.pvuvortex} for the velocity $u(t^n,\cdot)$ is evaluated by quadrature.
This task is a little delicate for two reasons: 
\begin{enumerate}
\item The kernel $K(x,y)$ featured in \cref{eq.pvuvortex} blows up when $x = y$, and this formula makes sense as a Cauchy principal value.
As suggested in \cite{krasny1986study}, to ease the numerical computation, we rely on the so-called ``vortex-blob'' approximate formula:
\begin{equation}\label{eq.blobpv}
u(t,x) \approx \frac{1}{2\pi} \int_{\Gamma(t)} \gamma(t,y) \frac{(x-y)^\perp}{\lvert x - y \lvert^2 + \delta^2}\:\d s(y),
\end{equation}
where the presence of the ``small'' parameter $\delta>0$ at the denominator of the above integrand removes the singularity of the kernel. 
\item Often in practice, the vortex strength $\gamma(t,x)$ takes infinite values at the endpoints of $\Gamma(t)$ while still being an integrable function on $\Gamma(t)$, see for instance the expression \cref{eq.inivorstren} used in the numerical example of \cref{sec.vortex}. To alleviate  this issue, we use a quadrature formula such as the simple midpoint rule, that evaluates the integrand of \cref{eq.blobpv} using quadrature points located inside the edges of $\Gamma(t)$ and not at its endpoints.
\end{enumerate} 

\section{Details about the treatment of the laser path optimization problem}\label{app.sdam}

\noindent This appendix details the calculation of the shape derivative of a slightly more general version of the functional \cref{eq.Jam}, associated to the laser path optimization example of \cref{sec.am}. We consider the quantity 
$$J(\Gamma) = \int_D j(x,u_\Gamma(x)) \:\d x, $$
depending on the open curve $\Gamma$ contained in the fixed computational domain $D$ via the
solution $u_\Gamma \in H^1(D)$ to the boundary-value problem \cref{eq.heateq} and $j : \R^2_x \times \R_u \to \R$ is a smooth function, satisfying suitable growth conditions. For any $x \in D$, $u \in \R$, we denote by $\nabla_x j(x,u)$ and $\frac{\partial j}{\partial u}(x,u)$ the gradient and derivative of the partial mappings $x \mapsto j(x,u)$ and $u \mapsto j(x,u)$, respectively. 

The shape derivative of $J(\Gamma)$ is the subject of the next proposition.

\begin{proposition}\label{prop.sd3dprint}
The functional $J(\Gamma)$ is shape differentiable at any smooth open curve $\Gamma \Subset D$, and its shape derivative reads: 
$$J^\prime(\Gamma)(\theta) = - q \int_\Gamma  \left(\kappa p_\Gamma +  \frac{\partial p_\Gamma}{\partial n} \right) \: \theta\cdot n \:\d s -  q\int_\Sigma p_\Gamma \theta \cdot n_\Sigma  \:\d \ell, $$
\end{proposition}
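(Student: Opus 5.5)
We will prove the formula with the rigorous transport (Murat–Simon) method mentioned in \cref{sec.am}, and not with Céa's formal Lagrangian. Throughout, $p_\Gamma \in H^1(D)$ denotes the adjoint state, which the statement uses implicitly. It is defined by
$$\forall v \in H^1(D), \quad \int_D \gamma \nabla p_\Gamma \cdot \nabla v \:\d x + \int_D \beta p_\Gamma v \:\d x = -\int_D \frac{\partial j}{\partial u}(x,u_\Gamma) v \:\d x,$$
with the sign chosen so that the final formula carries the minus signs of the statement.

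\textbf{Step 1: transported state.} For a small $\theta \in \Cinfty$, we set $T_\theta = \Id + \theta$ and $\overline{u}_\theta := u_{\Gamma_\theta}\circ T_\theta \in H^1(D)$. We may assume $\theta$ vanishes near $\partial D$, since $\Gamma \Subset D$; then $T_\theta$ maps $D$ onto itself. Changing variables in the variational formulation of \cref{eq.heateq} on $\Gamma_\theta$ gives
$$\int_D \gamma A(\theta)\nabla \overline{u}_\theta\cdot\nabla v\,\d x + \int_D \beta\,\overline{u}_\theta v\, |\det(\I+\nabla\theta)|\,\d x = \int_D \beta u_0 v |\det(\I+\nabla\theta)|\,\d x + \int_\Gamma q\, v\, |\com(\I+\nabla\theta)n|\,\d s,$$
where $A(\theta) = |\det(\I+\nabla\theta)|(\I+\nabla\theta)^{-1}(\I+\nabla\theta)^{-T}$. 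The surface term comes from \cref{prop.chgvarintsurf}.

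All coefficients depend smoothly on $\theta$, and the bilinear form is uniformly coercive on $H^1(D)$. The implicit function theorem therefore shows that $\theta\mapsto\overline{u}_\theta$ is Fréchet differentiable into $H^1(D)$. Differentiating at $\theta=0$ and using \cref{eq.dertgtJac}, the Lagrangian derivative $\mathring{u} = \frac{\d}{\d\theta}\overline{u}_\theta(\theta)$ satisfies
$$\int_D \gamma\nabla\mathring u\cdot\nabla v + \beta\mathring u v\,\d x = -\int_D \gamma A'(0)(\theta)\nabla u_\Gamma\cdot\nabla v\,\d x - \int_D \beta(u_\Gamma-u_0)v\,\dv\theta\,\d x + \int_\Gamma q v\,\dv_\Gamma\theta\,\d s,$$
where $A'(0)(\theta) = \dv\theta\,\I - \nabla\theta - \nabla\theta^T$.

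\textbf{Step 2: derivative of $J$ and elimination of $\mathring u$.} We write $J(\Gamma_\theta) = \int_D j(T_\theta(x),\overline u_\theta)|\det(\I+\nabla\theta)|\,\d x$. Differentiating gives
$$J'(\Gamma)(\theta) = \int_D \Big(\nabla_x j\cdot\theta + j\,\dv\theta\Big)\d x + \int_D \frac{\partial j}{\partial u}\mathring u\,\d x.$$
Take $v = \mathring u$ in the adjoint equation and $v = p_\Gamma$ in the equation for $\mathring u$. This replaces the last term by minus the right-hand side of the $\mathring u$ equation evaluated at $v = p_\Gamma$. The result is a distributed (volume) expression for $J'(\Gamma)(\theta)$, which is linear in $\theta$ and $\nabla\theta$.

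\textbf{Step 3: reduction to a boundary expression.} This is the main obstacle. The volume terms must be shown to cancel, leaving only terms on $\Gamma$ and $\Sigma$. Both $u_\Gamma$ and $p_\Gamma$ are smooth away from $\Gamma$, and $p_\Gamma$ has no source on $\Gamma$, so $p_\Gamma$ is $H^2$ and in fact smooth near $\Gamma$. The function $u_\Gamma$, however, has a jump in normal derivative across $\Gamma$ and a logarithmic-type singularity at $\Sigma$. We therefore use the structure theorem instead of integrating by parts globally.

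First, since $\theta \mapsto J(\Gamma_\theta)$ is invariant under fields that are zero on $\Gamma$ (the state does not change), the distributed derivative vanishes for such fields. Hence $J'(\Gamma)(\theta)$ depends only on $\theta$ on $\Gamma$; we do not even need $\theta|_\Gamma$ to be smooth in the normal direction. With this, the volume terms can be evaluated as the $\theta$-variation of the pure source term.

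The simplest route is to use the adjoint identity at the unperturbed level, which gives
$$J(\Gamma_\theta) - J(\Gamma) \approx -\Big(\int_{\Gamma_\theta} q\,p_\Gamma\,\d s - \int_\Gamma q\, p_\Gamma\,\d s\Big)$$
to first order. Here $p_\Gamma$ is a fixed smooth function near $\Gamma$, and the terms involving $\mathring u$ and the $\theta$-variation of the operator cancel by symmetry. Making this "$p_\Gamma$ frozen" step rigorous is the delicate point. I would justify it by an averaged-adjoint argument: introduce $p_\theta$, the adjoint state of the perturbed problem, note that $J(\Gamma_\theta) - J(\Gamma) = -\int_{\Gamma_\theta} q\,p_\theta\,\d s + \int_\Gamma q\, p\,\d s + $ (second-order terms), and use the continuity of $\theta\mapsto p_\theta$ near $\Gamma$.

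\textbf{Step 4: shape derivative of the line integral.} It remains to differentiate $\theta \mapsto \int_{\Gamma_\theta} q\,p_\Gamma\,\d s$. By \cref{prop.chgvarintsurf,eq.dertgtJac}, its derivative is
$$\int_\Gamma q\big(\dv_\Gamma\theta\; p_\Gamma + \nabla p_\Gamma\cdot\theta\big)\,\d s.$$
Apply \cref{prop.IPPsurf} with $f = p_\Gamma$:
$$\int_\Gamma \dv_\Gamma\theta\; p_\Gamma\,\d s = \int_\Gamma \kappa\, p_\Gamma\,\theta\cdot n\,\d s - \int_\Gamma \theta\cdot\nabla_\Gamma p_\Gamma\,\d s + \int_\Sigma p_\Gamma\,\theta\cdot n_\Sigma\,\d\ell.$$
Since $\nabla p_\Gamma\cdot\theta - \nabla_\Gamma p_\Gamma\cdot\theta = \frac{\partial p_\Gamma}{\partial n}\,\theta\cdot n$, the sum becomes
$$\int_\Gamma \Big(\kappa\, p_\Gamma + \frac{\partial p_\Gamma}{\partial n}\Big)\theta\cdot n\,\d s + \int_\Sigma p_\Gamma\,\theta\cdot n_\Sigma\,\d\ell.$$
Multiplying by $-q$ gives the stated formula.
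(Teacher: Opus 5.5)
\textbf{Step 3 is a genuine gap.} Your Steps 1, 2 and 4 are sound. The claim in Step 3 is also true: $J^\prime(\Gamma)(\theta)$ is indeed the derivative of $\theta\mapsto -\int_{\Gamma_\theta} q\,p_\Gamma\,\d s$ with $p_\Gamma$ frozen. But Step 3 carries the entire content of the proposition, and you do not prove it.

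The invariance remark only shows that $J^\prime(\Gamma)(\theta)$ depends on $\theta|_\Gamma$; it does not say which linear form of $\theta|_\Gamma$ it is. The sentence ``the terms involving $\mathring u$ and the variation of the operator cancel by symmetry'' is an assertion, not an argument. Starting from your Step 2, what has to be established is
\begin{multline*}
\int_D \big(\nabla_x j\cdot\theta + j\,\dv\theta\big)\,\d x + \int_D \gamma\big(\dv\theta\,\I - \nabla\theta - \nabla\theta^T\big)\nabla u_\Gamma\cdot\nabla p_\Gamma\,\d x \\
+ \int_D \beta(u_\Gamma - u_0)\,p_\Gamma\,\dv\theta\,\d x = -\int_\Gamma q\,\nabla p_\Gamma\cdot\theta\,\d s .
\end{multline*}
This is exactly what the paper's Steps 4--5 carry out. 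One integrates by parts separately on each side of $\Gamma$. All interior terms cancel because of the equations satisfied by $u_\Gamma$ and $p_\Gamma$, not by symmetry. The only surviving term comes from the jump $[\gamma\,\partial u_\Gamma/\partial n] = -q$. One also checks that small circles around $\Sigma$ contribute nothing, since $\nabla u_\Gamma$ is only logarithmically singular there.

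Your averaged-adjoint fallback fails as written. With the perturbed adjoint $p_\theta$ in one line integral and $p_\Gamma$ in the other, the discrepancy $\int_{\Gamma_\theta} q\,(p_\theta - p_\Gamma)\,\d s$ is of first order in $\theta$. For instance, take $j(x,u)=u^2/2$ and $u_0=0$. Testing the state equation for $u_\Gamma$ with $p_\theta-p_\Gamma$, and using the two adjoint equations, shows that this discrepancy equals $-(J(\Gamma_\theta)-J(\Gamma))+O(\|\theta\|^2)$. The right-hand side of your identity therefore has first-order part $2J^\prime(\Gamma)(\theta)$. A correct averaged-adjoint identity uses one and the same averaged adjoint state in both line integrals.

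\textbf{How to close the gap.} The gap closes easily, in the spirit of your frozen-adjoint idea and without Step 2. Both states live in the fixed space $H^1(D)$, and $\Gamma$ enters only through the source term. Subtract the variational problems for $u_{\Gamma_\theta}$ and $u_\Gamma$ and test with $p_\Gamma$; then test the adjoint problem with $u_{\Gamma_\theta}-u_\Gamma$. This gives the exact identity
\[
\int_D \partial_u j(x,u_\Gamma)\,(u_{\Gamma_\theta}-u_\Gamma)\,\d x = -\Big(\int_{\Gamma_\theta} q\,p_\Gamma\,\d s - \int_\Gamma q\,p_\Gamma\,\d s\Big).
\]
Hence $J(\Gamma_\theta)-J(\Gamma)$ equals the right-hand side up to a Taylor remainder bounded by $C\|u_{\Gamma_\theta}-u_\Gamma\|^2_{L^2(D)}$; this needs $\partial_u j$ Lipschitz in $u$, which holds for the laser-path functional. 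The bound $\|u_{\Gamma_\theta}-u_\Gamma\|_{L^2(D)}=O(\|\theta\|)$ follows from two facts:
\begin{itemize}
\item your Step 1, which gives $\|\overline{u}_\theta-u_\Gamma\|_{H^1(D)}=O(\|\theta\|)$;
\item the estimate $\|u_\Gamma\circ T_\theta^{-1}-u_\Gamma\|_{L^2(D)}\le C\|\theta\|_{L^\infty}\|\nabla u_\Gamma\|_{L^2(D)}$.
\end{itemize}
Your Step 4 then finishes the proof. It only needs $p_\Gamma\in\calC^1$ near $\Gamma$, which interior elliptic regularity provides since $\partial_u j(x,u_\Gamma)$ is bounded. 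Calling $p_\Gamma$ ``smooth'' is too strong when $j$ jumps across $\partial\Omega_T$.

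\textbf{Comparison with the paper.} This route avoids the Lagrangian derivative, the endpoint singularity of $\nabla u_\Gamma$, and the tedious check that $R(\theta)=0$. The paper's integration-by-parts template, in turn, is what still works when $\Gamma$ enters the operator or the function space itself, as in the fault-reconstruction problem.
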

where the adjoint state $p_\Gamma$ is the $H^1(D)$ solution to the following boundary-value problem: 
\begin{equation}\label{eq.pGammam}
\left\{
\begin{array}{cl}
-\dv(\gamma \nabla p_\Gamma) + \beta p_\Gamma = - \frac{\partial j}{\partial u}(x,u_\Gamma)& \text{in } D, \\
\gamma \frac{\partial p_\Gamma}{\partial n} = 0 & \text{on } \partial D. 
\end{array}
\right.
\end{equation}
\begin{proof}[Sketch of the proof]
We proceed along the lines of \cite{allaire2020survey}, introducing the transported mapping $\overline{u_\Gamma}(\theta) := u_{\Gamma_\theta} \circ (\Id + \theta)$. \par\medskip

\noindent \textit{Step 1: We prove the differentiability of the mapping $\theta \mapsto \overline{u_\Gamma}(\theta)$ and we characterize its derivative.}

To achieve this, we write the variational problem satisfied by the state function $u_{\Gamma_\theta}$ attached to the perturbed curve $\Gamma_\theta$: 
$$\forall v \in H^1(D), \quad \int_D \gamma \nabla u_{\Gamma_\theta}\cdot \nabla v \:\d x + \beta \int_D u_{\Gamma_\theta} v \:\d x = \beta \int_D u_0 v \:\d x + \int_{\Gamma_\theta} q v \:\d s. $$
Applying the change of variables of \cref{prop.chgvarintsurf} via the mapping $(\Id + \theta)$ and using the change of test functions $w = v \circ (\Id + \theta)$ to transport this problem back to the reference configuration featuring the curve $\Gamma \subset D$, we obtain the following variational characterization for $\overline{u_\Gamma}(\theta)$:
\begin{equation}\label{eq.varfuGth}
\forall w \in H^1(D), \quad \int_D \gamma A (\theta) \nabla \overline{u_{\Gamma}}(\theta) \cdot \nabla w \:\d x + \beta \int_{D} m(\theta)  \overline{u_{\Gamma}}(\theta) w \:\d x = \\
  \beta \int_{D} m(\theta) u_0 w \:\d x + \int_\Gamma m_{\text{t}}(\theta) q w \:\d s,
\end{equation}
where we have set 
$$m(\theta) = \lvert \det(\I + \nabla \theta) \lvert, \:  m_{\text{t}}(\theta) = \lvert \com(\I + \nabla \theta) n \lvert , \text{ and }  A(\theta ) =  m(\theta) (\I + \nabla\theta )^{-1} (\I + \nabla \theta)^{-T}. $$
A  classical application of the implicit function theorem now allows to prove that the mapping $\theta \mapsto \overline{u_\Gamma}(\theta)$ is Fr\'echet differentiable in the neighborhood of $\theta = 0$, see for instance \cite{murat1976controle}. Taking derivatives in \cref{eq.varfuGth}, we arrive at the following boundary-value problem for $\mathring{u_\Gamma}(\theta)$:
\begin{multline}\label{eq.bvpderlag} 
\forall w \in H^1(D), \quad  \int_D \gamma \nabla \mathring{u_\Gamma}(\theta) \cdot \nabla v \:\d x + \beta \int_D \mathring{u_\Gamma}(\theta) v \:\d x  = - \int_D \gamma \Big(\dv(\theta) \I - \nabla \theta - \nabla \theta^{-T} \Big) \nabla u_\Gamma \cdot \nabla w \:\d x \\
- \beta \int_D \dv(\theta) u_{\Gamma} w \:\d x + \beta \int_D \dv(\theta) u_0 w \:\d x + \int_\Gamma \dv_\Gamma(\theta) q w \:\d s .
\end{multline}\par\medskip

\noindent \textit{Step 2: We calculate the shape derivative of $J(\Gamma)$ in terms of $\mathring{u_\Gamma}(\theta)$.}

This follows from a simple change of variables in the definition of $J(\Gamma)$:
$$J(\Gamma_\theta) = \int_D m(\theta) j(x + \theta(x), \overline{u_\Gamma}(\theta)(x)) \:\d x , $$
which yields, by application of the chain rule:
$$J^\prime(\Gamma)(\theta) = \int_D  \Big(\dv(\theta) j(x,u_\Gamma) + \nabla_x j(x,u_\Gamma) \cdot \theta \Big)\:\d x + \int_D \frac{\partial j}{\partial u}(x,u_\Gamma)  \mathring{u_\Gamma}(\theta)\:\d x. $$
\par\medskip 

\noindent \textit{Step 3: We infer the volume form of the shape derivative $J^\prime(\Gamma)(\theta)$ by using the adjoint method.}

The defining problem \cref{eq.pGammam} for the adjoint state $p_\Gamma$ reads, under variational form: 
$$\forall w \in H^1(D), \quad \int_D \gamma \nabla p_{\Gamma} \cdot \nabla w \:\d x + \beta \int_D p_{\Gamma} w \:\d x = - \int_D \frac{\partial j}{\partial u}(x,u_\Gamma) w \:\d x.$$ 
Hence, by a classical adjoint-based computation, we obtain: 
\begin{equation}\label{eq.volformam}
\begin{array}{>{\displaystyle}cc>{\displaystyle}l}
J^\prime(\Gamma)(\theta) &=& \int_D  \Big(\dv(\theta) j(x,u_\Gamma) + \nabla_x j(x,u_\Gamma) \cdot \theta \Big)\:\d x  - \int_D \gamma \nabla p_{\Gamma} \cdot \nabla   \mathring{u_\Gamma}(\theta) \:\d x - \beta \int_D p_{\Gamma}   \mathring{u_\Gamma}(\theta) \:\d x \\[1em]
&=&  \int_D  \Big(\dv(\theta) j(x,u_\Gamma) + \nabla_x j(x,u_\Gamma) \cdot \theta \Big)\:\d x  + \int_D \gamma \Big(\dv(\theta) \I - \nabla \theta - \nabla \theta^{-T} \Big) \nabla u_\Gamma \cdot \nabla p_\Gamma \:\d x \\[1em]
&& \hspace{3.5cm} + \beta \int_D \dv(\theta) u_{\Gamma} p_\Gamma \:\d x - \beta \int_D \dv(\theta) u_0 p_\Gamma \:\d x - \int_\Gamma \dv_\Gamma(\theta) q p_\Gamma \:\d s .
\end{array}
\end{equation}
This is the desired volume form of $J^\prime(\Gamma)(\theta)$. 
\par\medskip
\noindent \textit{Step 4: We inspect the regularity of $u_\Gamma$ and $p_\Gamma$.}

Classical considerations from elliptic regularity theory -- about which we refer to e.g. \S 9.6 in \cite{brezis2010functional} -- allow to see that $p_\Gamma$ belongs to $H^2(D)$, while $u_\Gamma \in H^1(D)$ has jumping normal derivative across $\Gamma$:
$$\left[ \gamma \frac{\partial u_\Gamma}{\partial n} \right] = -q \text{ on } \Gamma, $$
where we recall from \cref{sec.fault} the notation $\left[ \cdot \right]$ for the jump of a discontinuous quantity across $\Gamma$. 

\par\medskip
\noindent \textit{Step 5: We infer the surface form of the derivative $J^\prime(\Gamma)(\theta)$.}

To achieve this, we perform integration by parts to eliminate all the derivatives of $\theta$ in the foregoing expression \cref{eq.volformam} of the volume form of $J^\prime(\Gamma)(\theta)$. This rests on the following avatars of the Green's formula, which are valid for all sufficiently smooth functions $u : D \to \R$ and vector fields $a,b :D \to \R^2$:
\begin{equation}\label{eq.Greenav1}
\int_D \dv(\theta) u \:\d x = \int_{\partial D} u \theta \cdot n \:\d s - \int_D \theta \cdot \nabla u \:\d x, 
\end{equation}
\begin{equation}\label{eq.Greenav2}
 \int_\Gamma \dv_\Gamma \theta u \:\d s =  \int_\Gamma \kappa u \theta \cdot n \:\d s - \int_\Gamma \theta_\Gamma \cdot \nabla_\Gamma u \:\d s + \int_\Sigma u \theta \cdot n_\Sigma  \:\d \ell,
\end{equation}
and
\begin{equation}\label{eq.Greenav3}
\int_D \nabla \theta a \cdot b \:\d x = \int_{\partial D} (\theta \cdot b ) (a \cdot n) \:\d s - \int_D \dv(a) \theta \cdot b \:\d x - \int_D \nabla b a \cdot \theta \:\d x.
\end{equation}
Hence, we arrive at:
\begin{multline*}
J^\prime(\Gamma)(\theta) = -\int_\Gamma \left[ \gamma \nabla u_\Gamma \cdot \nabla p_\Gamma \right] \theta\cdot n \:\d s  + \int_\Gamma \left( \left[ \gamma \nabla u_\Gamma \cdot \theta \right] \frac{\partial p_\Gamma}{\partial n} +  \gamma (\nabla p_\Gamma \cdot \theta) \left[ \frac{\partial u_\Gamma}{\partial n} \right] \right) \:\d s \\
-  \int_\Gamma q p_\Gamma \kappa \theta \cdot n \:\d s - \int_\Sigma q p_\Gamma \theta \cdot n_\Sigma  \:\d \ell + R(\theta),
\end{multline*}
where $R(\theta)$ is a sum of integrals on $D$ featuring only $\theta$ (not its derivatives) and of integrals on $\Gamma$ involving the tangential part of $\theta$ (not its normal component), that may change from one line to the other. 

Decomposing the inner product featured in the first integral in the above right-hand side into tangential and normal components, and using the fact that only the normal derivative of $u_\Gamma$ jumps through $\Gamma$ (see Step 4), we arrive at: 
$$J^\prime(\Gamma)(\theta) =  \int_\Gamma \left[ \gamma \frac{\partial u_\Gamma}{\partial n} \right]  \frac{\partial p_\Gamma}{\partial n}\: \theta\cdot n \:\d s  -  \int_\Gamma q p_\Gamma \kappa \theta \cdot n \:\d s - \int_\Sigma q p_\Gamma \theta \cdot n_\Sigma  \:\d \ell  + R(\theta). $$
Eventually, elementary albeit tedious computations allow to see that the remainder $R(\theta)$ vanishes, which reveals the desired surface form of the derivative of $J(\Gamma)$:
$$J^\prime(\Gamma)(\theta) = - q \int_\Gamma  \left(\kappa p_\Gamma +  \frac{\partial p_\Gamma}{\partial n} \right) \: \theta\cdot n \:\d s -  \int_\Sigma q p_\Gamma \theta \cdot n_\Sigma  \:\d \ell . $$
This terminates the proof of \cref{prop.sd3dprint}.
\end{proof}

\section{Mathematical treatment of the fault line detection example}\label{app.fault}

\noindent This appendix provides mathematical details about the treatment of the fault line detection example of \cref{sec.fault}: the consistency of the penalized approximation of the ``ideal'', discontinuous displacement field introduced in there is detailed in \cref{app.justifbroken}, and the shape derivative of the least-square functional $J(\Gamma)$ in \cref{eq.sopbfrac} used for inversion is calculated in \cref{sec.sdfault}. 

\subsection{Justification of the approximate model for broken spaces}\label{app.justifbroken}

\noindent In this section, we rigorously justify the approximate physical model considered in \cref{sec.fault}. 
For simplicity, we assume that the discontinuity line $\Gamma$ is closed:  the smooth, bounded computational domain $D \subset \R^2$ is divided into two complementary regions $\Omega_1, \Omega_2$, separated by the interface $\Gamma$: 
$$\Omega_1 \cap \Omega_2 = \emptyset, \:\: \overline{D} = \overline{\Omega_1} \cup \overline{\Omega_2}, \quad \Gamma = \partial \Omega_1 \cap \partial \Omega_2.$$
The precise situation of \cref{sec.fault}, where the support $\Gamma$ of the discontinuity in the model is an open line, is treated analogously since the imposed jump $g_\Gamma$ on $\Gamma$ has a smooth extension by $0$ to a closed curve $\widetilde\Gamma$ extending $\Gamma$, i.e. $g \in \widetilde{H}^{1/2}(\Gamma)^2$.
Still for simplicity, we consider the counterpart situation of that in \cref{sec.fault} arising in the physical context of the conductivity equation: the physical behavior of the system presenting the fault is described by the solution  $u \in H^1(D \setminus \Gamma)$ to the following problem:
\begin{equation}\label{eq.conducfrac}
\left\{
\begin{array}{cl}
-\dv(\gamma \nabla u) = 0 & \text{in } D, \\
u = 0 & \text{on } \partial D_D, \\
\gamma \frac{\partial u}{\partial n} = 0 & \text{on } \partial D \setminus \overline{\partial D_D},\\ 
\left[u\right] = g_\Gamma \text{ and } \left[\gamma \frac{\partial u}{\partial n}\right] = 0 & \text{on } \Gamma,
\end{array}
\right.
\end{equation}
where $n$ stands for the unit normal vector to $\Gamma$, pointing outward $\Omega_2$, see \cref{fig.fracset} (a).
Note that since $\Gamma$ is fixed in this section, for notational simplicity, we omit the $_\Gamma$ subscript referring to it in the solution of \cref{eq.conducfrac}.

According to \cref{sec.fault}, the approximate version of \cref{eq.conducfrac} is obtained by considering two functions $u_{1,\e}$, $u_{2,\e}$ accounting for (approximations of) the restrictions of $u$ to $\Omega_1$ and $\Omega_2$, respectively: 
these are both defined on $D$ as a whole by filling the complementary regions $D \setminus \overline{\Omega_1}$  and $D \setminus \overline{\Omega_2}$ with a material with ``soft'' conductivity. Precisely, we consider the variational problem: 
\begin{multline}\label{eq.conduceappfrac}
\text{Search for } (u_{1,\e}, u_{2,\e}) \in H^1_{\partial D_D}(D)^2 \text{ s.t. }  \forall (v_1, v_2) \in H^1_{\partial D_D}(D)^2, \\
 \int_D \gamma_{1,\e} \nabla u_{1,\e} \cdot \nabla v_1 \:\d x +  \int_D \gamma_{2,\e} \nabla u_{2,\e} \cdot \nabla v_2 \:\d x + \frac{1}{\e} \int_\Gamma (u_{2,\e} - u_{1,\e}) (v_2 -v_1) \:\d s =
\frac{1}{\e} \int_\Gamma g_\Gamma (v_2 -v_1)\:\d s,
\end{multline}
where we have introduced the conductivity coefficients: 
$$\gamma_{1,\e}(x) =\left\{ \begin{array}{cl}
\gamma(x) & \text{if } x \in \Omega_1,\\
\e \gamma(x) & \text{if } x \in \Omega_2,
\end{array}
\right. 
\text{ and }\gamma_{2,\e}(x) =\left\{ \begin{array}{cl}
\e\gamma(x) & \text{if } x \in \Omega_1,\\
 \gamma(x) & \text{if } x \in \Omega_2,
\end{array}
\right.  $$

In order to examine the consistency of the approximation \cref{eq.conduceappfrac} with the exact problem \cref{eq.conducfrac} as $\e\to 0$, let us introduce the Dirichlet extensions $u_i$ of $u$ from $\Omega_i$ to $D$ for $i=1,2$, i.e.:
$$ 
\left\{
\begin{array}{cl}
u_i= u & \text{in } \Omega_i, \\
-\dv(\gamma \nabla u_i) = 0 & \text{on } D\setminus \overline{\Omega_i}.
\end{array}
\right.
$$
Note that, as a consequence of the standard regularity theory for elliptic equations, both functions are smooth on $\overline{\Omega_1}$ and $\overline{\Omega_2}$:
\begin{equation}\label{eq.ustsmooth}
 \lvert\lvert u_1\lvert\lvert_{\calC^{1,\alpha}(\Omega_1 \cup \Omega_2)} +  \lvert\lvert u_2\lvert\lvert_{\calC^{1,\alpha}(\Omega_1 \cup \Omega_2)} \:\: \leq \:\:C \lvert\lvert g_\Gamma \lvert\lvert_{\calC^{1,\alpha}(\Gamma)},
 \end{equation}
see e.g. \S 9.6 in \cite{brezis2010functional}.
The desired consistency result is then the following.
\begin{proposition}
The following convergence holds true:
$$\lvert\lvert u_{1,\e} - u_1 \lvert\lvert_{H^1(D)} + \lvert\lvert u_{2,\e} - u_2 \lvert\lvert_{H^1(D)} \leq C \e^{\frac12}  \lvert\lvert g_\Gamma\lvert\lvert_{\calC^{1,\alpha}(\Gamma)}.$$
\end{proposition}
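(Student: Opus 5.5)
The plan is an energy estimate for the errors $e_{i} := u_{i,\e} - u_i$, $i=1,2$. I will write down the variational identity that the pair $(u_1,u_2)$ satisfies in \cref{eq.conduceappfrac}, up to an explicit residual, and subtract it from the equation for $(u_{1,\e},u_{2,\e})$. I orient $n$ from $\Omega_1$ to $\Omega_2$, as in \cref{sec.fault}, so that the traces satisfy $u_2 - u_1 = [u] = g_\Gamma$ on $\Gamma$. Denote by $h := \gamma \frac{\partial u}{\partial n}$ the common normal flux on $\Gamma$, which is continuous since $[\gamma \frac{\partial u}{\partial n}]=0$. For any test pair $(v_1,v_2)$, I integrate by parts separately in $\Omega_1$ and in $\Omega_2$, using that $u_1,u_2$ are $\gamma$-harmonic in both subdomains, the homogeneous boundary conditions on $\partial D$, and the fact that the flux of $u_1$ from $\Omega_1$ and that of $u_2$ from $\Omega_2$ both equal $h$. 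Because $u_2 - u_1 = g_\Gamma$, the penalty terms cancel with the right-hand side exactly, and I expect to obtain
$$ a_\e\big((e_1,e_2),(v_1,v_2)\big) = \int_\Gamma h\,(v_2 - v_1)\:\d s \;-\; \e\int_{\Omega_2}\gamma\nabla u_1\cdot\nabla v_1\:\d x \;-\; \e\int_{\Omega_1}\gamma\nabla u_2\cdot\nabla v_2\:\d x , $$
up to signs to be checked. Here $a_\e$ is the bilinear form in the left-hand side of \cref{eq.conduceappfrac}. The first term is $O(1)$ and is not small. This is the key point: it is compensated only through the weight $\frac1\e$ of the penalty term, which reflects that $u_{2,\e}-u_{1,\e} \approx g_\Gamma - \e h$.

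Next I test with $(v_1,v_2) = (e_1,e_2)$. The left-hand side dominates
$$c\Big(\|\nabla e_1\|^2_{L^2(\Omega_1)} + \|\nabla e_2\|^2_{L^2(\Omega_2)} + \e\|\nabla e_1\|^2_{L^2(\Omega_2)} + \e\|\nabla e_2\|^2_{L^2(\Omega_1)}\Big) + \frac1\e \|e_2-e_1\|^2_{L^2(\Gamma)} .$$
I bound the flux term by Young's inequality:
$$\Big\lvert \int_\Gamma h(e_2-e_1)\:\d s\Big\lvert \leq \frac{\e}{2}\|h\|^2_{L^2(\Gamma)} + \frac1{2\e}\|e_2-e_1\|^2_{L^2(\Gamma)},$$
and the last part is absorbed. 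I treat the $\e$-weighted volume terms in the same way, e.g.
$$\e\int_{\Omega_2}\gamma\nabla u_1\cdot\nabla e_1 \:\d x\leq C\e\|\nabla u_1\|^2_{L^2(\Omega_2)} + \frac{c\,\e}{2}\|\nabla e_1\|^2_{L^2(\Omega_2)},$$
which again is absorbed. The regularity estimate \cref{eq.ustsmooth} bounds $\|h\|_{L^2(\Gamma)}$ and $\|\nabla u_i\|_{L^2(D)}$ by $C\|g_\Gamma\|_{\calC^{1,\alpha}(\Gamma)}$. This yields
$$\|\nabla e_1\|_{L^2(\Omega_1)} + \|\nabla e_2\|_{L^2(\Omega_2)} + \e^{-1/2}\|e_2-e_1\|_{L^2(\Gamma)} \leq C\e^{1/2}\|g_\Gamma\|_{\calC^{1,\alpha}(\Gamma)} .$$

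The main obstacle is that this estimate controls $\nabla e_1$ on $\Omega_2$ (and $\nabla e_2$ on $\Omega_1$) only with weight $\e$, which is useless for the full $H^1(D)$ norm. To handle it, I will test \cref{eq.conduceappfrac} with $v_1$ supported in $\overline{\Omega_2}$ and vanishing on $\Gamma$, with $v_2 = 0$. This shows that $u_{1,\e}$ is $\gamma$-harmonic in $\Omega_2$ and satisfies the same homogeneous boundary conditions on $\partial D\cap\partial\Omega_2$ as $u_1$. Hence $e_1|_{\Omega_2}$ is the $\gamma$-harmonic extension of its trace on $\Gamma$, and standard elliptic estimates give
$$\|e_1\|_{H^1(\Omega_2)} \leq C\|e_1\|_{H^{1/2}(\Gamma)} \leq C\|e_1\|_{H^1(\Omega_1)}$$
by the trace theorem from the $\Omega_1$ side. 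To pass from $\|\nabla e_1\|_{L^2(\Omega_1)}$ to $\|e_1\|_{H^1(\Omega_1)}$, I will use a Poincaré inequality in $\Omega_1$. This requires $\partial D_D \cap \partial\Omega_1$ to have positive measure, as in the geometry of \cref{fig.fracset}. Otherwise I will instead combine a Poincaré inequality on $D$ with the smallness of $e_2-e_1$ on $\Gamma$, and work symmetrically with $e_2$. The symmetric argument, with $\Omega_1$ and $\Omega_2$ exchanged, handles $e_2$. Summing the two estimates gives the claimed bound $C\e^{1/2}\|g_\Gamma\|_{\calC^{1,\alpha}(\Gamma)}$ in $H^1(D)$.
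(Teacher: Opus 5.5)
Your energy identity and estimate are the paper's. The paper derives the same error equation: the $O(1)$ flux term $\int_\Gamma \gamma \frac{\partial u}{\partial n}(r_{2,\e}-r_{1,\e})\,\d s$ plus the two $\e$-weighted volume terms. It then tests with the errors and closes by Cauchy--Schwarz, which is equivalent to your Young-inequality absorption.

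You differ at the last step, and your version is the correct one. The paper stops at the energy bound and says it ``readily implies the desired estimate thanks to Poincar\'e's inequality''. As you observe, that bound only gives $\|\nabla(u_{1,\e}-u_1)\|_{L^2(\Omega_2)} + \|\nabla(u_{2,\e}-u_2)\|_{L^2(\Omega_1)} = O(1)$. So Poincar\'e alone cannot produce the $\e^{1/2}$ rate in $H^1(D)$. Your extra argument fills this gap. Testing with $v_1$ supported in $\overline{\Omega_2}$ and vanishing on $\Gamma$ shows that $u_{1,\e}$ and $u_1$ are both $\gamma$-harmonic in $\Omega_2$ with the same conditions on $\partial D$. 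The error there is therefore controlled by a lift of its trace on $\Gamma$, hence by $\|u_{1,\e}-u_1\|_{H^1(\Omega_1)}$. This is where the choice of the $u_i$ as Dirichlet extensions is genuinely used. It requires reading that definition as including the homogeneous Neumann condition on $\partial D\setminus\overline{\partial D_D}$, which the paper leaves implicit; otherwise the step fails.

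One bookkeeping point. In the geometry of \cref{fig.fracset}, $\Omega_2$ does not meet $\partial D_D$, so the step for $u_{2,\e}-u_2$ is not literally symmetric; it must be done in order:
\begin{enumerate}
\item Bound $u_{1,\e}-u_1$ in $H^1(\Omega_1)$ using Poincar\'e.
\item Use $\|u_{2,\e}-u_2\|_{L^2(\Gamma)} \le \|u_{1,\e}-u_1\|_{L^2(\Gamma)} + C\e\|g_\Gamma\|_{\calC^{1,\alpha}(\Gamma)}$, which follows from the penalty term, together with a Friedrichs inequality $\|w\|_{H^1(\Omega_2)} \le C(\|\nabla w\|_{L^2(\Omega_2)} + \|w\|_{L^2(\Gamma)})$.
\item Lift the trace of $u_{2,\e}-u_2$ into $\Omega_1$ with a cutoff vanishing near $\partial D_D$. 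This assumes $\overline{\Gamma}$ stays away from $\partial D_D$.
\end{enumerate}
You name this alternative, but it is always needed for one of the two subdomains in that geometry, not only ``otherwise''.
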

\begin{proof}
Let us define the error functions $r_{1,\e} := u_{1,\e} - u_1$ and $r_{2,\e} := u_{2,\e} - u_2 \in H^1_{\partial D_D}(D)$.
Then, $r_\e := (r_{1,\e}, r_{2,\e}) \in H^1_{\partial D_D}(D)^2$ satisfies, for an arbitrary test couple $v :=(v_1, v_2) \in H^1_{\partial D_D}(D)^2$:
\begin{multline*}
\int_D \gamma_{1,\e} \nabla r_{1,\e} \cdot \nabla v_1 \:\d x +  \int_D \gamma_{2,\e} \nabla r_{2,\e} \cdot \nabla v_2 \:\d x + \frac{1}{\e} \int_\Gamma ( r_{2,\e} - r_{1,\e}) (v_2 -v_1) \:\d s = \\
 - \int_D \gamma_{1,\e} \nabla u_1 \cdot \nabla v_1 \:\d x -  \int_D \gamma_{2,\e} \nabla u_2 \cdot \nabla v_2 \:\d x ,
\end{multline*}
where we have used the fact that, by construction $\left[u \right] = g_\Gamma$ on $\Gamma$. Inserting $v_1 = r_{1,\e}$, $v_2 = r_{2,\e}$ in this identity, decomposing the last two integrals in the above right-hand side onto $\Omega_1$ and $\Omega_2$ and integrating by parts, we obtain:
\begin{multline*}
\int_D \gamma_{1,\e} \lvert \nabla r_{1,\e} \lvert^2 \:\d x +  \int_D \gamma_{2,\e} \lvert \nabla r_{2,\e} \lvert^2 \:\d x + \frac{1}{\e} \int_\Gamma  ( r_{2,\e} - r_{1,\e})^2 \:\d s = \\ 
\int_{\Gamma}  \gamma_0 \frac{\partial u}{\partial n}  ( r_{2,\e} - r_{1,\e}) \:\d s - \e \int_{\Omega_2} \gamma_0 \nabla u_1 \cdot \nabla r_{1,\e} \:\d x  - \e \int_{\Omega_1} \gamma_0 \nabla u_2 \cdot \nabla r_{2,\e} \:\d x,
\end{multline*}
where we have used the continuity of the flux of $u$ through $\Gamma$. Now using the smoothness \cref{eq.ustsmooth} of $u_1$ and $u_2$, we arrive at: 
\begin{multline*}
\lvert\lvert \nabla r_{1,\e} \lvert\lvert^2_{L^2(\Omega_1)^2} + \lvert\lvert \nabla r_{2,\e} \lvert\lvert^2_{L^2(\Omega_2)^2}  + \e \lvert\lvert \nabla r_{1,\e} \lvert\lvert^2_{L^2(\Omega_2)^2} + \e \lvert\lvert \nabla r_{2,\e} \lvert\lvert^2_{L^2(\Omega_1)^2}  +\frac{1}{\e} \lvert\lvert  r_{2,\e} - r_{1,\e} \lvert\lvert^2_{L^2(\Gamma)}  \leq \\
 C \e^{1/2} \lvert\lvert g_\Gamma \lvert\lvert_{\calC^{1,\alpha}(\Gamma)} \Big( \frac{1}{\e^{1/2}}\lvert\lvert  r_{2,\e} - r_{1,\e} \lvert\lvert_{L^2(\Gamma)}  + \e^{1/2}   \lvert\lvert \nabla r_{1,\e} \lvert\lvert_{L^2(\Omega_2)^2} + \e^{1/2} \lvert\lvert \nabla r_{2,\e} \lvert\lvert_{L^2(\Omega_1)^2}  \Big). 
\end{multline*}
The Cauchy-Schwarz inequality now yields: 
$$ \lvert\lvert \nabla r_{1,\e} \lvert\lvert^2_{L^2(\Omega_1)^2} + \lvert\lvert \nabla r_{2,\e} \lvert\lvert^2_{L^2(\Omega_2)^2}  + \e \lvert\lvert \nabla r_{1,\e} \lvert\lvert^2_{L^2(\Omega_2)^2} + \e \lvert\lvert \nabla r_{2,\e} \lvert\lvert^2_{L^2(\Omega_1)^2}  +\frac{1}{\e} \lvert\lvert r_{2,\e} - r_{1,\e}\lvert\lvert^2_{L^2(\Gamma)}  \leq 
 C \e \lvert\lvert g_\Gamma \lvert\lvert_{\calC^{1,\alpha}(\Gamma)},  $$
 which readily implies the desired estimate thanks to the Poincar\'e's inequality. 
\end{proof}

\subsection{Calculation of the shape derivative}\label{sec.sdfault}

\noindent In this section, we detail the calculation of the shape derivative of the function $J(\Gamma)$ at play in the shape optimization problem \cref{eq.sopbfrac} used to reconstruct a fracture set inside a background medium. 

\subsubsection{The model context of the conductivity equation} 

\noindent To simplify the exposition, we first detail this derivation in the scalar context of the conductivity equation and for a generic functional $J(\Gamma)$, of the form: 
\begin{equation}\label{eq.Jfracgen}
J(\Gamma) = \int_{D} j(x,u_\Gamma) \:\d x, 
\end{equation}
where $j : D \times \R \to \R$ is a given, smooth function, and $u_\Gamma \in H^1_{\Gamma_D}(D \setminus \Gamma)$ is the solution to the conductivity equation \cref{eq.conducfrac}. \par\medskip

Our analysis starts with a lemma about the Lagrangian derivative of the normalized arc length function $s_\Gamma : \Gamma \to [0,1]$, involved in the definition \cref{eq.slipvec} of the slip function $g_\Gamma$. 
Throughout this section, we denote by $\tau:\Gamma \to \R^2$ the unit tangent vector to $\Gamma$, oriented in such a way that for any point $x \in \Gamma$, $(\tau(x),n(x))$ is a direct orthonormal frame of $\R^2$. In particular, $\tau(c_0) = -n_\Sigma(c_0)$ and $\tau(c_1) = n_\Sigma(c_1)$.

\begin{lemma}\label{lem.derarclength}
Let $\Gamma$ be a smooth, simple open curve of class $\calC^2$. For any $\theta \in \calC^{1,\infty}(\R^2; \R^2)$, let $\overline{s_\Gamma}(\theta) := s_{\Gamma_\theta} \circ (\Id + \theta)$ be the transported arc length function back to the reference curve $\Gamma$.
The function $\overline{s_\Gamma}(\theta)$ is Fr\'echet differentiable at $\theta=0$ and its derivative reads:
\begin{equation}\label{eq.derarclangthvol}
\text{For each point } x \in \Gamma, \quad \mathring{s_\Gamma}(\theta)(x) = \frac{1}{\lvert \Gamma\lvert} \left( \int_{\Gamma_{c_0,x}} \dv_\Gamma(\theta) \:\d s - s_\Gamma(x) \int_\Gamma \dv_\Gamma(\theta) \:\d s \right).
\end{equation}
Alternatively, this formula rewrites:
$$\mathring{s_\Gamma}(\theta)(x) = \frac{1}{\lvert \Gamma\lvert} \Bigg( (\theta\cdot \tau)(x) - s_\Gamma(x) (\theta\cdot \tau)(c_1)  -(1-s_\Gamma(x)) (\theta\cdot\tau)(c_0) + \int_{\Gamma_{c_0,x}} \kappa \theta\cdot n \:\d s - s_\Gamma(x) \int_\Gamma \kappa \theta\cdot n \:\d s  \Bigg). $$
\end{lemma}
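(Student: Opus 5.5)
We plan to start from the definition of $s_{\Gamma_\theta}$ and pull everything back to the reference curve $\Gamma$ with the change of variables of \cref{prop.chgvarintsurf}. The map $\Id+\theta$ sends the endpoints $c_0,c_1$ of $\Gamma$ to the endpoints of $\Gamma_\theta$, and it sends the arc $\Gamma_{c_0,x}$ onto $(\Gamma_\theta)_{(\Id+\theta)(c_0),(\Id+\theta)(x)}$. Hence, exactly as in the computation at the end of \cref{app.vorimp},
$$\overline{s_\Gamma}(\theta)(x) = \frac{\int_{\Gamma_{c_0,x}} m_{\text{t}}(\theta)\:\d s}{\int_\Gamma m_{\text{t}}(\theta)\:\d s}, \quad m_{\text{t}}(\theta) = \lvert \com(\I+\nabla\theta)n\lvert .$$

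The main analytic input is that $\theta\mapsto m_{\text{t}}(\theta)$ is Fr\'echet differentiable from $\Cinfty$ into $\calC^0(\Gamma)$, with derivative $\dv_\Gamma(\theta)$ at $\theta=0$. This is the expansion \cref{eq.dertgtJac}. We will justify it briefly. The map $\theta\mapsto\com(\I+\nabla\theta)n$ is polynomial in $\nabla\theta$, and $\lvert\cdot\lvert$ is smooth away from $0$. The derivative of $\com(\I+\nabla\theta)n$ at $\theta=0$ is $\dv(\theta)n-\nabla\theta^T n$, and its inner product with $n$ gives $\dv(\theta)-\nabla\theta n\cdot n=\dv_\Gamma(\theta)$. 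The remainder is uniform in $x\in\Gamma$.

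It follows that the numerator $N(\theta)(x)$ and the denominator $L(\theta)$ are differentiable, uniformly in $x$. Their derivatives are $\int_{\Gamma_{c_0,x}}\dv_\Gamma(\theta)\:\d s$ and $\int_\Gamma\dv_\Gamma(\theta)\:\d s$, respectively. Moreover $L(0)=\lvert\Gamma\lvert>0$. The quotient rule, together with $N(0)(x)/L(0)=s_\Gamma(x)$, then gives \cref{eq.derarclangthvol} directly.

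For the alternative expression, we will apply the integration by parts formula of \cref{prop.IPPsurf} with $f\equiv 1$. We apply it on the whole of $\Gamma$ and also on the sub-arc $\Gamma_{c_0,x}$, which is itself a smooth open curve with boundary $\{c_0,x\}$. This gives
$$\int_{\Gamma_{c_0,x}}\dv_\Gamma(\theta)\:\d s=\int_{\Gamma_{c_0,x}}\kappa\,\theta\cdot n\:\d s+\theta\cdot\tau(x)-\theta\cdot\tau(c_0).$$
Here we use that the outer conormal of $\Gamma_{c_0,x}$ is $-\tau$ at $c_0$ and $\tau$ at $x$, consistent with the stated convention $\tau(c_0)=-n_\Sigma(c_0)$, $\tau(c_1)=n_\Sigma(c_1)$. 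For the full curve, the analogous identity has boundary contribution $\theta\cdot\tau(c_1)-\theta\cdot\tau(c_0)$.

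Substituting both identities and collecting the $\theta\cdot\tau(c_0)$ terms gives the factor $-(1-s_\Gamma(x))$, which yields the second formula. The only point that needs care is the sign and orientation bookkeeping at the endpoints, and the check that $\kappa=\dv_\Gamma(n)$ carries the sign convention of \cref{prop.IPPsurf}. We expect no real obstacle beyond this bookkeeping and the uniform-in-$x$ justification of the expansion of $m_{\text{t}}(\theta)$.
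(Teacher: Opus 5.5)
Your proposal is correct and follows essentially the same route as the paper. You pull back by $\Id+\theta$ to write $\overline{s_\Gamma}(\theta)(x)$ as a ratio of integrals of the tangential Jacobian $\lvert \com(\I+\nabla\theta)n\lvert$ over $\Gamma_{c_0,x}$ and $\Gamma$, differentiate using its expansion with first-order term $\dv_\Gamma(\theta)$, and then obtain the second formula by integrating by parts with \cref{prop.IPPsurf}; your extra remarks (uniformity in $x$, the quotient rule, the conormal signs $-\tau$ at $c_0$ and $\tau$ at $x$ on the sub-arc) just make explicit details the paper leaves implicit.
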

\begin{proof}
By definition, the arc length of a point $y$ on the perturbed curve $\Gamma_\theta$ is given by 
$$s_{\Gamma_\theta}(y) = \frac{\lvert (\Gamma_\theta)_{c_0, y} \lvert }{\lvert \Gamma_\theta \lvert}.  $$
Hence, using the change of variables of \cref{prop.chgvarintsurf}, the transported arc length function at a point $x \in \Gamma$ equals:
$$\overline{s_\Gamma}(\theta)(x) =  \frac{1}{\lvert \Gamma_\theta \lvert} \lvert (\Gamma_\theta)_{(\Id + \theta)(c_0), (\Id + \theta)(x)} \lvert  = \frac{1}{\lvert \Gamma_\theta \lvert} \int_{\Gamma_{c_0,x}} \lvert \com(\I + \nabla \theta)n \lvert \:\d s.$$
Using the expression \cref{eq.dertgtJac} for the derivative of the tangential Jacobian, this yields: 
$$\mathring{s_\Gamma}(\theta)(x) = \frac{1}{\lvert \Gamma\lvert} \left( \int_{\Gamma_{c_0,x}} \dv_\Gamma(\theta) \:\d s -\frac{\lvert \Gamma_{c_0,x}\lvert}{\lvert \Gamma\lvert} \int_\Gamma \dv_\Gamma(\theta) \:\d s \right), $$
which is the desired formula \cref{eq.derarclangthvol}. The second expression follows by integration by parts on $\Gamma$, according to \cref{prop.IPPsurf}. 
\end{proof} 

We now come to the main result of this section.

\begin{proposition}\label{prop.sdfraclap}
The functional $J(\Gamma)$ in \cref{eq.Jfracgen} is shape differentiable and its shape derivative equals:
$$J^\prime(\Gamma)(\theta) = \int_\Gamma v_\Gamma \: \theta \cdot n \:\d s + \alpha_0 (\theta\cdot \tau)(c_0) +\alpha_1 (\theta\cdot \tau)(c_1), $$
 where the scalar field $v_\Gamma : \Gamma \to \R$ is defined by: 
\begin{multline*}
v_\Gamma =  -\left[ j(x,u_\Gamma) \right]    - \frac{1}{\lvert\Gamma\lvert} \gamma \frac{\partial p_\Gamma}{\partial \tau}  g^\prime(s_\Gamma(x))     -\frac{1}{\lvert\Gamma\lvert}   \left( \int_{\Gamma_{x,c_1}} \gamma \frac{\partial p_\Gamma}{\partial n}(y) g^\prime(s_\Gamma(y)) \d s(y)\right) \kappa  \\
  + \left( \frac{1}{\lvert\Gamma\lvert } \int_\Gamma \gamma \frac{\partial p_\Gamma}{\partial n} s_\Gamma(x) g^\prime(s_\Gamma(x)) \:\d s \right) \kappa,
\end{multline*} 
and the scalars $\alpha_0, \alpha_1$ are given by: 
$$ \alpha_0 = \frac{1}{\lvert\Gamma\lvert} \int_\Gamma \gamma \frac{\partial p_\Gamma}{\partial n} (1-s_\Gamma(x)) g^\prime(s_\Gamma(x)) \:\d s  \:\:\text{ and } \:\: \alpha_1 = \frac{1}{\lvert\Gamma\lvert} \int_\Gamma \gamma \frac{\partial p_\Gamma}{\partial n} s_\Gamma(x) g^\prime(s_\Gamma(x)) \:\d s  .$$   
In these formulas, the adjoint state $p_\Gamma$ is the unique solution in $H^1_{\partial D_D}(D)$ to the following boundary-value problem: 
\begin{equation}\label{eq.adjfault}
\left\{
\begin{array}{cl}
-\dv(\gamma \nabla p_\Gamma) = -\frac{\partial j}{\partial u}(x,u_\Gamma) & \text{in } D, \\
p_\Gamma = 0 & \text{on } \partial D_D, \\
\gamma \frac{\partial p_\Gamma}{\partial n} = 0 &\text{on } \partial D \setminus \overline{\partial D_D}.
\end{array}
\right.
\end{equation}
\end{proposition}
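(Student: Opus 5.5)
Following the approach of \cref{prop.sd3dprint}, the plan is to transport the problem back to the reference curve and then use an adjoint computation. The one essential difference is that the jump condition now depends on $\Gamma$ through the slip $g_\Gamma = g\circ s_\Gamma$.

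The first step is to lift the jump. Let $\widetilde{\Gamma}$ be a closed extension of $\Gamma$, and let $\Omega_1,\Omega_2$ be the two regions it separates. Since $g$ vanishes at $0$ and $1$, the function $g_\Gamma$ extends by zero to $\widetilde\Gamma$. We can therefore write $u_\Gamma = w_\Gamma + \chi_{\Omega_2} G_\Gamma$, where $G_\Gamma$ is any smooth lifting of $g_\Gamma$ near $\widetilde\Gamma$ and $w_\Gamma \in H^1_{\partial D_D}(D)$ solves a standard transmission problem. For a perturbation $\theta$, we transport everything through $\Id+\theta$ and set $\overline{u_\Gamma}(\theta) = u_{\Gamma_\theta}\circ(\Id+\theta)$.

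Written back on $\Gamma$, the jump of $\overline{u_\Gamma}(\theta)$ across $\Gamma$ is $g(\overline{s_\Gamma}(\theta))$. The variational formulation on $D\setminus\Gamma$ acquires the usual coefficients $A(\theta)$ and $m(\theta)$ from the proof of \cref{prop.sd3dprint}. The implicit function theorem, combined with \cref{lem.derarclength} for the differentiability of $\overline{s_\Gamma}(\theta)$, shows that $\theta\mapsto \overline{u_\Gamma}(\theta)$ is differentiable. Its derivative $\mathring{u_\Gamma}(\theta)$ has jump $g'(s_\Gamma)\mathring{s_\Gamma}(\theta)$ on $\Gamma$. It also satisfies a volume equation on $D\setminus\Gamma$, with source $-\dv(\gamma(\dv\theta\,\I-\nabla\theta-\nabla\theta^T)\nabla u_\Gamma)$ understood in the weak, broken sense.

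Next we differentiate $J(\Gamma_\theta)=\int_D m(\theta)j(x+\theta,\overline{u_\Gamma}(\theta))\,\d x$ exactly as in Step 2 of the proof of \cref{prop.sd3dprint}. The term $\int_D \partial_u j\,\mathring{u_\Gamma}(\theta)\,\d x$ is then eliminated using the adjoint $p_\Gamma$ of \cref{eq.adjfault}. Here $p_\Gamma$ has no jump, because the source $\partial_u j$ is in $L^2$, so $p_\Gamma\in H^2$ near $\Gamma$.

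The key observation is what happens when we integrate by parts separately on $\Omega_1$ and $\Omega_2$ against a function that jumps across $\Gamma$. Testing the adjoint equation against $\mathring{u_\Gamma}(\theta)$ produces the boundary term $-\int_\Gamma \gamma\frac{\partial p_\Gamma}{\partial n}[\mathring{u_\Gamma}(\theta)]\,\d s$, up to sign conventions. This is precisely the term carrying the arc-length contribution, and it equals
\[
-\int_\Gamma \gamma \frac{\partial p_\Gamma}{\partial n}\, g'(s_\Gamma)\,\mathring{s_\Gamma}(\theta)\,\d s.
\]
The remaining volume terms are handled with the Green formulas \cref{eq.Greenav1,eq.Greenav3}, applied separately on each side of $\Gamma$. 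In these the jump terms $[\cdot]$ survive. Since $u_\Gamma$ has continuous flux but jumping value, one uses $[\partial_\tau u_\Gamma] = g'(s_\Gamma)/|\Gamma|$. After integrating by parts along $\Gamma$ via \cref{prop.IPPsurf}, this produces the term $-\frac{1}{|\Gamma|}\gamma\partial_\tau p_\Gamma\, g'(s_\Gamma)$ together with $-[j(x,u_\Gamma)]$. By the structure theorem, only normal components of $\theta$ remain in the bulk, so every tangential remainder must cancel.

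Finally, we insert the second expression of \cref{lem.derarclength} for $\mathring{s_\Gamma}(\theta)$ into the boundary term above and apply Fubini on $\Gamma\times\Gamma$. The nested integral satisfies
\[
\int_\Gamma f(x)\int_{\Gamma_{c_0,x}}\kappa\,\theta\cdot n\,\d s\,\d s(x) = \int_\Gamma \Big(\int_{\Gamma_{y,c_1}} f\,\d s\Big)\kappa\,\theta\cdot n\,\d s(y),
\]
which gives the $\kappa$-terms in $v_\Gamma$. The terms involving $s_\Gamma(x)$ give the constant coefficients multiplying $\kappa$ and $(\theta\cdot\tau)(c_1)$, and the terms involving $(1-s_\Gamma)$ give $\alpha_0$. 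The term $(\theta\cdot\tau)(x)$ combines with the tangential remainders from the previous step and cancels.

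I expect the main obstacle to be the careful bookkeeping of the tangential and endpoint contributions and of the signs. In particular, one must check that the $(\theta\cdot\tau)(x)$ term exactly cancels the tangential terms produced by the broken Green formulas. One must also justify the Green formulas near $\Sigma$, where $u_\Gamma$ is singular. For the latter, I would first try to argue with $\theta$ supported away from $\Sigma$ plus a density/cut-off argument, relying on the vanishing of $g$ at $0,1$.
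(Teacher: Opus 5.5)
Your proposal is correct and follows essentially the same route as the paper: transport with the jump $g(\overline{s_\Gamma}(\theta))$, the implicit function theorem, the adjoint $p_\Gamma$ producing $-\int_\Gamma \gamma \frac{\partial p_\Gamma}{\partial n} g'(s_\Gamma)\mathring{s_\Gamma}(\theta)\:\d s$, broken Green formulas with $[\partial_\tau u_\Gamma]=g'(s_\Gamma)/|\Gamma|$, and then the second form of \cref{lem.derarclength} followed by Fubini; your lifting of the jump and your cut-off near $\Sigma$ add rigor that the paper skips. The cancellation you flag does hold directly: the broken Green formula leaves the tangential term $\frac{1}{|\Gamma|}\int_\Gamma \gamma \frac{\partial p_\Gamma}{\partial n} g'(s_\Gamma)\,\theta\cdot\tau\:\d s$, which is exactly offset by the $(\theta\cdot\tau)(x)$ part of $\mathring{s_\Gamma}(\theta)$, and the $\frac{\partial p_\Gamma}{\partial \tau}$ term comes straight from $[\nabla_\Gamma u_\Gamma\cdot\nabla_\Gamma p_\Gamma]$ with no integration by parts along $\Gamma$.
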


\begin{proof}[Hint of proof]
We proceed along the same strategy as in the proof of \cref{prop.sd3dprint}.
Let us introduce the transported version $\overline{u_\Gamma}(\theta) = u_{\Gamma_\theta} \circ (\Id + \theta) \in H^1_{\partial D_D}(D \setminus \overline\Gamma)$ of $u_{\Gamma_\theta}$ from the perturbed configuration back to the reference one. \par\medskip

\noindent \textit{Step 1: We prove the differentiability of the mapping $\theta \mapsto \overline{u_\Gamma}(\theta)$ and we identify its derivative.}

This task starts from the following variational characterization of $u_{\Gamma_\theta} \in H^1_{\partial D_D}(D \setminus \overline{\Gamma_\theta})$: 
$$
\left[ u_{\Gamma_\theta} \right] = g(s_{\Gamma_\theta}) \text{ on } \Gamma_\theta,  \text{ and } \forall v \in H^1_{\partial D_D}(D), \:\:  \int_D \gamma \nabla u_{\Gamma_\theta} \cdot \nabla v \:\d x= 0.
 $$
A change of variables and of test functions in the previous identity shows that $\overline{u_\Gamma}(\theta)$ is the unique solution in $H^1_{\partial D_D}(D \setminus \overline{\Gamma})$ to the following problem:
\begin{equation}\label{eq.ubarfrac}
\left[ \overline{u_\Gamma}(\theta) \right] = g(\overline{s_{\Gamma}}(\theta)) \text{ on } \Gamma,  \text{ and } \forall w \in H^1_{\partial D_D}(D), \:\:  \int_D \gamma A(\theta) \nabla \overline{u_{\Gamma}}(\theta) \cdot \nabla w \:\d x= 0,
\end{equation}
where, again, we set:
$$m(\theta) = \lvert \det(\I + \nabla \theta) \lvert,  \text{ and }  A(\theta ) =  m(\theta) (\I + \nabla\theta )^{-1} (\I + \nabla \theta)^{-T}. $$
The implicit function theorem shows that the mapping $\theta \mapsto \overline{u_{\Gamma}}(\theta)$ is Fr\'echet differentiable, see again \cite{murat1976controle}. Its derivative $\mathring{u_{\Gamma}}(\theta) \in H^1_{\partial D_D}(D \setminus \overline{\Gamma})$ then satisfies the following variational problem, obtained by taking derivatives in \cref{eq.ubarfrac}: 
\begin{multline}\label{eq.varflagder}
\left[ \mathring{u_\Gamma}(\theta) \right] = g^\prime(s_\Gamma) \mathring{s_\Gamma}(\theta) \text{ on } \Gamma,  \text{ and }\\
 \forall w \in H^1_{\partial D_D}(D), \:\:  \int_D \gamma \nabla \mathring{u_\Gamma}(\theta) \cdot \nabla w \:\d x= - \int_D \Big(\dv(\theta) \I - \nabla \theta - \nabla \theta^T \Big) \nabla u_\Gamma \cdot \nabla w \:\d x,
\end{multline}
where the Lagrangian derivative $ \mathring{s_\Gamma}(\theta)$ of the arc length function is provided by \cref{lem.derarclength}. 
\par\medskip 

\noindent \textit{Step 2. We calculate the derivative of $J(\Gamma)$ in terms of the Lagrangian derivative $\mathring{u_{\Gamma}}(\theta)$.} 

The definition \cref{eq.Jfracgen} of $J(\Gamma)$ and a change of variables based on \cref{prop.chgvarintsurf} together imply that: 
$$ J(\Gamma_\theta) = \int_D m(\theta) j(x+\theta(x),\overline{u_{\Gamma}}(\theta)) \:\d x.$$
By taking derivatives in this formula, we readily obtain: 
\begin{equation}\label{eq.derJnoadj}
J^\prime(\Gamma)(\theta) = \int_D \dv(\theta) j(x,u_{\Gamma}) \:\d x +  \int_D \nabla_x j(x,u_{\Gamma}) \cdot \theta \:\d x 
+  \int_D \frac{\partial j}{\partial u}(x,u_{\Gamma}) \mathring{u_{\Gamma}}(\theta)  \:\d x.
\end{equation}
\par\medskip 

\noindent \textit{Step 3. We transform this expression by introducing the adjoint state $p_{\Gamma}$.}

To achieve this, we inject the strong form of the defining boundary-value problem \cref{eq.adjfault} into \cref{eq.derJnoadj}, before integrating by parts. This yields:
\begin{equation}\label{eq.volformfrac}
\begin{array}{>{\displaystyle}cc>{\displaystyle}l} 
J^\prime(\Gamma)(\theta) &=&    \int_D \dv(\theta) j(x,u_{\Gamma}) \:\d x +  \int_D \nabla_x j(x,u_{\Gamma}) \cdot \theta \:\d x 
+ \int_D \dv(\gamma \nabla p_\Gamma) \mathring{u_{\Gamma}}(\theta)  \:\d x  \\[1em]
&=&    \int_D \dv(\theta) j(x,u_{\Gamma}) \:\d x +  \int_D \nabla_x j(x,u_{\Gamma}) \cdot \theta \:\d x -  \int_\Gamma \gamma \frac{\partial p_\Gamma}{\partial n} \left[ \mathring{u_{\Gamma}}(\theta)\right] \:\d s - \int_D \gamma \nabla \mathring{u_\Gamma}(\theta) \cdot \nabla p_\Gamma \:\d x \\[1em]
&=&    \int_D \dv(\theta) j(x,u_{\Gamma}) \:\d x +  \int_D \nabla_x j(x,u_{\Gamma}) \cdot \theta \:\d x -  \int_\Gamma \gamma \frac{\partial p_\Gamma}{\partial n} g^\prime(s_\Gamma(x)) \mathring{s_\Gamma}(\theta)(x) \:\d s \\[1em]
&& \hspace{6cm} +\int_D \gamma \Big(\dv(\theta) \I - \nabla \theta - \nabla \theta^T\Big)\nabla u_\Gamma \cdot \nabla p_\Gamma \:\d x,
\end{array}  
\end{equation}
where we have used the continuity of the normal derivative of $p_\Gamma$ through $\Gamma$ to pass from the first to the second line, and the variational characterization \cref{eq.varflagder} of the Lagrangian derivative $\mathring{u_\Gamma}(\theta)$ to obtain the final line. This formula is the volume form of the shape derivative $J^\prime(\Gamma)(\theta)$.
\par\medskip 

\noindent \textit{Step 4. We perform integration by parts in the volume form, discarding all terms that should not contribute, in view of the expected structure \cref{eq.structJp}.} 

This relies on similar calculations as in the proof of \cref{prop.sd3dprint}. Throughout the rest of the proof, we denote by $R(\theta)$ a (possibly changing from line to line) collection of integrals on $D$ depending only on $\theta$ (not on its derivatives), or surface integrals on $\Gamma$ involving only the tangential component of $\theta$ inside $\Gamma$. 

Since $\theta$ vanishes on $\partial D$ the first two integrals in the volume form \cref{eq.volformfrac} equal: 
$$ \int_D \dv(\theta) j(x,u_\Gamma) \:\d x +  \int_D \nabla_x j(x,u_\Gamma) \cdot \theta \:\d x =  -\int_\Gamma \left[ j(x,u_\Gamma) \right] \theta\cdot n \:\d s + R(\theta), $$
where the sign comes from the orientation of the normal vector $n$, see \cref{fig.fracset} (a), and from the definition \cref{eq.defjump} of the jump of a discontinuous quantity across $\Gamma$.

Furthermore, using the integration by parts formulas \cref{eq.Greenav1,eq.Greenav3,eq.Greenav3} after decomposition of $D$ into both subdomains $\Omega_1$, $\Omega_2$ located on either side of the fracture $\Gamma$, we obtain:
\begin{equation*}
\begin{array}{>{\displaystyle}cc>{\displaystyle}l} 
\int_D  \gamma \Big(\dv(\theta) \I - \nabla \theta - \nabla\theta^T \Big) \nabla u_{\Gamma} \cdot \nabla p_{\Gamma} \:\d x &=&  - \int_\Gamma \left[ \gamma \nabla_\Gamma u_{\Gamma} \cdot \nabla_\Gamma p_{\Gamma} \right] \:\theta\cdot n \:\d s 
+\int_\Gamma \left[ \gamma \frac{\partial u_{\Gamma}}{\partial n}  \frac{\partial p_{\Gamma}}{\partial n} \right] \:\theta\cdot n \:\d s  + R(\theta) \\[1em]
&=& - \int_\Gamma \gamma  g^\prime(s_\Gamma(x)) \frac{\partial p_\Gamma}{\partial \tau} \frac{\partial s_\Gamma}{\partial \tau} \theta \cdot n \:\d s + R(\theta) \\[1em]
&=& - \frac{1}{\lvert\Gamma\lvert} \int_\Gamma \gamma  \frac{\partial p_\Gamma}{\partial \tau}  g^\prime(s_\Gamma(x)) \: \theta \cdot n \:\d s + R(\theta),
\end{array}
\end{equation*}
where we have used the boundary conditions satisfied by $u_\Gamma$ and $p_\Gamma$ on $\Gamma$ to pass from the first line to the second one, and notably the continuity of $p_\Gamma$ and of the normal derivatives of $u_\Gamma$ and $p_\Gamma$. To obtain the second line, we have also used the formula $\frac{\partial s_\Gamma}{\partial \tau} = \frac{1}{\lvert\Gamma\lvert}$, which follows from the very definition of the tangential derivative.

Combining all these results, we arrive at: 
\begin{equation}\label{eq.Jpfaultap1}
J^\prime(\Gamma)(\theta) =    -\int_\Gamma \left[ j(x,u_\Gamma) \right] \theta\cdot n \:\d s -  \int_\Gamma \gamma \frac{\partial p_\Gamma}{\partial n} g^\prime(s_\Gamma(x)) \mathring{s_\Gamma}(\theta)(x) \:\d s    - \frac{1}{\lvert\Gamma\lvert} \int_\Gamma \gamma \frac{\partial p_\Gamma}{\partial \tau}  g^\prime(s_\Gamma(x))  \theta \cdot n \:\d s  + R(\theta).
\end{equation}
We now invoke \cref{lem.derarclength} to expand the second term in the above right-hand side: 
\begin{multline*}
 - \int_\Gamma \gamma \frac{\partial p_\Gamma}{\partial n} g^\prime(s_\Gamma(x)) \mathring{s_\Gamma}(\theta)(x) \:\d s = \left(\frac{1}{\lvert\Gamma\lvert} \int_\Gamma \gamma \frac{\partial p_\Gamma}{\partial n} s_\Gamma(x) g^\prime(s_\Gamma(x)) \:\d s \right) \theta\cdot \tau(c_1) \\
   +  \left(\frac{1}{\lvert\Gamma\lvert} \int_\Gamma \gamma \frac{\partial p_\Gamma}{\partial n} (1-s_\Gamma(x)) g^\prime(s_\Gamma(x)) \:\d s \right) \theta\cdot \tau(c_0)
   -\frac{1}{\lvert\Gamma\lvert} \int_\Gamma  \gamma \frac{\partial p_\Gamma}{\partial n} g^\prime(s_\Gamma(x)) \left( \int_{\Gamma_{c_0,x}} \kappa \theta\cdot n \:\d s \right) \:\d s(x) \\
  + \left( \frac{1}{\lvert\Gamma\lvert } \int_\Gamma \gamma \frac{\partial p_\Gamma}{\partial n} s_\Gamma(x) g^\prime(s_\Gamma(x)) \:\d s \right) \int_\Gamma  \kappa \theta\cdot n \:\d s +R(\theta).
\end{multline*}
Finally, thanks to Fubini's theorem, this rewrites:
\begin{multline}\label{eq.Jpfaultap2}
 - \int_\Gamma \gamma \frac{\partial p_\Gamma}{\partial n} g^\prime(s_\Gamma(x)) \mathring{s_\Gamma}(\theta)(x) \:\d s = \left(\frac{1}{\lvert\Gamma\lvert} \int_\Gamma \gamma \frac{\partial p_\Gamma}{\partial n} s_\Gamma(x) g^\prime(s_\Gamma(x)) \:\d s \right) \theta\cdot \tau(c_1) \\
   +  \left(\frac{1}{\lvert\Gamma\lvert} \int_\Gamma \gamma \frac{\partial p_\Gamma}{\partial n} (1-s_\Gamma(x)) g^\prime(s_\Gamma(x)) \:\d s \right) \theta\cdot \tau(c_0)
   -\frac{1}{\lvert\Gamma\lvert} \int_\Gamma  \left( \int_{\Gamma_{x,c_1}} \gamma \frac{\partial p_\Gamma}{\partial n}(y) g^\prime(s_\Gamma(y)) \d s(y)\right) \kappa \theta\cdot n \:\d s(x)  \\
  + \left( \frac{1}{\lvert\Gamma\lvert } \int_\Gamma \gamma \frac{\partial p_\Gamma}{\partial n} s_\Gamma(x) g^\prime(s_\Gamma(x)) \:\d s \right) \int_\Gamma  \kappa \theta\cdot n \:\d s + R(\theta).
\end{multline}
Combining \cref{eq.Jpfaultap1,eq.Jpfaultap2} and after the elementary albeit tedious verification that $R(\theta)$ cancels, we obtain the desired result.
\end{proof}

\subsubsection{Extension in the context of the elasticity equations} 

\noindent We now proceed to the calculation of the shape derivative of the functional $J(\Gamma)$ of interest, framed in the context of the linear elasticity system.
Here again, to set ideas, we consider a function of the domain of the form: 
$$ J(\Gamma) = \int_D j(x,u_\Gamma(x)) \:\d x,$$
where $j: D \times \R^2 \to \R$ is a smooth enough function, and $u_\Gamma$ is the elastic displacement, solution to the system \cref{eq.elasfrac}. 
The result of interest is the following. 

\begin{proposition}\label{prop.sdfaultelas}
The functional $J(\Gamma)$ is shape differentiable and its shape derivative equals:
$$J^\prime(\Gamma)(\theta) = \int_\Gamma v_\Gamma \theta \cdot n \:\d s + \alpha_0 (\theta\cdot \tau)(c_0) +\alpha_1 (\theta\cdot \tau)(c_1), $$
 where the scalar field $v_\Gamma : \Gamma \to \R$ is defined by: 
\begin{multline*}
v_\Gamma =  -\left[ j(x,u_\Gamma) \right]    - \frac{1}{\lvert\Gamma\lvert} (Ae(p_\Gamma) \tau) \cdot g^\prime(s_\Gamma(x))     -\frac{1}{\lvert\Gamma\lvert}   \left( \int_{\Gamma_{x,c_1}} (Ae(p_\Gamma)n)(y) \cdot g^\prime(s_\Gamma(y)) \d s(y)\right) \kappa(x)  \\
  + \left( \frac{1}{\lvert\Gamma\lvert } \int_\Gamma s_\Gamma(x)  (Ae(p_\Gamma)n) \cdot g^\prime(s_\Gamma(x)) \:\d s \right) \kappa(x),
\end{multline*} 
and the scalars $\alpha_0, \alpha_1$ are given by: 
$$ \alpha_0 = \frac{1}{\lvert\Gamma\lvert} \int_\Gamma  (1-s_\Gamma(x))  (Ae(p_\Gamma)n)(x) \cdot g^\prime(s_\Gamma(x)) \:\d s \:\:\text{ and } \:\: \alpha_1 = \frac{1}{\lvert\Gamma\lvert} \int_\Gamma s_\Gamma(x)  (Ae(p_\Gamma)n)(x) \cdot g^\prime(s_\Gamma(x)) \:\d s  .$$   
In these formulas, the adjoint state $p_\Gamma$ is the unique solution in $H^1_{\partial D_D}(D)^2$ to the following boundary-value problem: 
\begin{equation}\label{eq.adjfaultelas}
\left\{
\begin{array}{cl}
-\dv(A e( p_\Gamma)) = -\nabla_u j (x,u_\Gamma) & \text{in } D, \\
p_\Gamma = 0 & \text{on } \partial D_D, \\
Ae(p_\Gamma) n= 0 &\text{on } \partial D \setminus \overline{\partial D_D}.
\end{array}
\right.
\end{equation}
\end{proposition}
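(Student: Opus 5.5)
The plan is to transpose, almost line by line, the proof of \cref{prop.sdfraclap} to the vector-valued setting. Scalar quantities are replaced by vector ones: $\gamma\nabla u$ becomes the stress $Ae(u)$, and $\gamma \frac{\partial p}{\partial n}$ becomes the traction $Ae(p)n$.

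\textbf{Step 1: transported displacement.} I introduce $\overline{u_\Gamma}(\theta) = u_{\Gamma_\theta}\circ(\Id+\theta) \in H^1_{\partial D_D}(D\setminus\overline\Gamma)^2$. Changing variables and test functions in the weak form of \cref{eq.elasfrac} shows that $\overline{u_\Gamma}(\theta)$ satisfies $[\overline{u_\Gamma}(\theta)] = g(\overline{s_\Gamma}(\theta))$ on $\Gamma$. It also satisfies the transported bilinear form
$$\int_D m(\theta)\, A\Big(\tfrac12\big(\nabla \overline{u}(\I+\nabla\theta)^{-1} + (\cdot)^T\big)\Big) : \tfrac12\big(\nabla w(\I+\nabla\theta)^{-1} + (\cdot)^T\big) \:\d x = 0$$
for all $w \in H^1_{\partial D_D}(D)^2$. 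The implicit function theorem, as in \cite{murat1976controle}, gives Fr\'echet differentiability. The derivative $\mathring{u_\Gamma}(\theta)$ has jump $g'(s_\Gamma)\mathring{s_\Gamma}(\theta)$, with $\mathring{s_\Gamma}(\theta)$ given by \cref{lem.derarclength}. It satisfies
$$\int_D Ae(\mathring{u_\Gamma}(\theta)) : e(w) \:\d x = -\int_D \Big(\dv(\theta)\, Ae(u_\Gamma):e(w) - Ae(u_\Gamma):(\nabla w\nabla\theta) - Ae(w):(\nabla u_\Gamma\nabla\theta)\Big) \:\d x.$$

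\textbf{Steps 2--3: adjoint.} Differentiating $J(\Gamma_\theta) = \int_D m(\theta)\, j(x+\theta,\overline{u_\Gamma}(\theta)) \:\d x$ gives the analogue of \cref{eq.derJnoadj}, with $\nabla_u j \cdot \mathring{u_\Gamma}(\theta)$. Next I insert \cref{eq.adjfaultelas} and integrate by parts separately on $\Omega_1$ and $\Omega_2$. Since $p_\Gamma \in H^1(D)^2$ has continuous traction across $\Gamma$, the only interface term is $-\int_\Gamma (Ae(p_\Gamma)n)\cdot g'(s_\Gamma)\,\mathring{s_\Gamma}(\theta) \:\d s$. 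This yields the volume form of $J'(\Gamma)(\theta)$.

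\textbf{Step 4: surface form.} I integrate by parts on each side of $\Gamma$ using \cref{eq.Greenav1,eq.Greenav3}, collecting into $R(\theta)$ all terms that involve only $\theta$ or tangential components on $\Gamma$. The volume term in $j$ gives $-\int_\Gamma [j]\,\theta\cdot n$, as before. The elastic bilinear term produces an Eshelby-type interface quantity $[\,Ae(u_\Gamma):e(p_\Gamma) - (Ae(u_\Gamma)n)\cdot\partial_n p_\Gamma - (Ae(p_\Gamma)n)\cdot\partial_n u_\Gamma\,]\,\theta\cdot n$. To simplify it I use three facts:
\begin{itemize}
\item $p_\Gamma$ and its traction are continuous across $\Gamma$;
\item $Ae(u_\Gamma)n$ is continuous across $\Gamma$;
\item $[\partial_\tau u_\Gamma] = \partial_\tau g(s_\Gamma) = g'(s_\Gamma)/\lvert\Gamma\lvert$.
\end{itemize}
Decomposing the gradients in the frame $(\tau,n)$, the jump should collapse to $-\frac1{\lvert\Gamma\lvert}(Ae(p_\Gamma)\tau)\cdot g'(s_\Gamma)$. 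This is the elastic counterpart of the term $-\frac1{\lvert\Gamma\lvert}\gamma\,\partial_\tau p_\Gamma\, g'$ in \cref{prop.sdfraclap}.

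Finally, I expand the term involving $\mathring{s_\Gamma}(\theta)$ with the second formula of \cref{lem.derarclength}, and swap the order of integration by Fubini exactly as in \cref{eq.Jpfaultap2}. This produces the endpoint coefficients $\alpha_0,\alpha_1$ and the two curvature contributions in $v_\Gamma$.

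\textbf{Main obstacle.} The delicate part is the algebra of Step 4. The jump of the elastic interface quantity must reduce exactly to $(Ae(p_\Gamma)\tau)\cdot g'$, which requires using the symmetry of $A$, so that $Ae(u):e(p) = Ae(p):e(u)$, and the frame decomposition $\nabla u = \partial_\tau u\otimes\tau + \partial_n u\otimes n$. In the same step I must check that the leftover $R(\theta)$ cancels. I would first carry out this computation with $\theta$ supported near a point of $\Gamma$ and normal to $\Gamma$, where most boundary terms disappear. I would then treat the tangential part separately, verifying that it only recombines into $R(\theta)$.
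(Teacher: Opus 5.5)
Your proposal is correct and follows essentially the same route as the paper's proof. You transport the problem back to $\Gamma$ and use the adjoint $p_\Gamma$ to absorb $\mathring{u_\Gamma}(\theta)$ up to the interface term $-\int_\Gamma (Ae(p_\Gamma)n)\cdot g^\prime(s_\Gamma)\,\mathring{s_\Gamma}(\theta)\:\d s$, then integrate by parts on either side of $\Gamma$ using the splitting $Ae(u_\Gamma):e(p_\Gamma) = (Ae(p_\Gamma)n)\cdot \nabla u_\Gamma n + (Ae(p_\Gamma)\tau)\cdot\nabla u_\Gamma \tau$, and finish with \cref{lem.derarclength} and Fubini; grouping the three bilinear terms into one Eshelby-type jump is just a repackaging of the paper's term-by-term computation, and the cancellation of $R(\theta)$ is left unchecked in both.
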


\begin{proof}[Hint of proof]
As the proof mirrors that of \cref{prop.sdfraclap}, for brevity, we only report on the needed technical adaptations.
\par\medskip
\noindent \textit{Step 1.}
The elastic displacement $u_{\Gamma_\theta} \in H^1_{\partial D_D}(D \setminus \overline{\Gamma_\theta})^2$ is characterized by: 
$$
\left[ u_{\Gamma_\theta} \right] = g(s_{\Gamma_\theta}) \text{ on } \Gamma_\theta,  \text{ and } \forall v \in H^1_{\partial D_D}(D)^2, \:\:  \int_D A e(u_{\Gamma_\theta}): e( v) \:\d x= 0.
 $$
A change of variables and of test functions in the previous identity shows that $\overline{u_\Gamma}(\theta)$ is the unique solution in $H^1_{\partial D_D}(D \setminus \overline{\Gamma})^2$ to the following problem:
$$\left[ \overline{u_\Gamma}(\theta) \right] = g(\overline{s_{\Gamma}}(\theta)) \text{ on } \Gamma,  \text{ and } \forall w \in H^1_{\partial D_D}(D)^2, \:\:  \int_D m(\theta) A  E( \overline{u_\Gamma}(\theta), \theta): E( w, \theta) \:\d x= 0,$$
where
$$m(\theta) = \lvert \det(\I + \nabla \theta) \lvert,  \text{ and }  E(w,\theta) := \frac{1}{2}\Big( \nabla w (I+\nabla \theta)^{-1} +  (I+\nabla \theta)^{-T}\nabla w^T \Big).$$
Again, the implicit function theorem shows that the mapping $\theta \mapsto \overline{u_{\Gamma}}(\theta)$ is Fr\'echet differentiable, and its derivative $\mathring{u_{\Gamma}}(\theta) \in H^1_{\partial D_D}(D \setminus \overline{\Gamma})^2$ satisfies the following variational problem: 
\begin{multline}\label{eq.varflagder}
\left[ \mathring{u_\Gamma}(\theta) \right] = g^\prime(s_\Gamma) \mathring{s_\Gamma}(\theta) \text{ on } \Gamma,  \text{ and }
 \forall w \in H^1_{\partial D_D}(D)^2, \\
   \int_D  Ae( \mathring{u_\Gamma}(\theta)): e(w) \:\d x=  - \int_D \dv(\theta) Ae(u_\Gamma): e(w) \:\d x + \int_D A C(u_\Gamma,\theta) : e(w) \:\d x + \int_D A e(u_\Gamma) : C(w,\theta)\:\d x,
\end{multline}
with the shortcut $C(v,\theta) := \frac{1}{2} \Big(\nabla v \nabla\theta + \nabla\theta^T \nabla v^T \Big)$.
\par\medskip 
\noindent \textit{Step 2.} A straightforward calculation yields:
$$ J^\prime(\Gamma)(\theta) = \int_D \dv(\theta) j(x,u_{\Gamma}) \:\d x +  \int_D \nabla_x j(x,u_{\Gamma}) \cdot \theta \:\d x 
+  \int_D \nabla_u j(x,u_{\Gamma}) \cdot \mathring{u_{\Gamma}}(\theta)  \:\d x.$$
\par\medskip
\noindent \textit{Step 3.} By an adjoint calculation, similar to that conducted in the proof of \cref{prop.sdfraclap}, we obtain:
\begin{multline}\label{eq.volformfracelas}
 J^\prime(\Gamma)(\theta) = \int_D \dv(\theta) j(x,u_{\Gamma}) \:\d x +  \int_D \nabla_x j(x,u_{\Gamma}) \cdot \theta \:\d x 
-  \int_\Gamma Ae(p_\Gamma)n \cdot\left[ \mathring{u_{\Gamma}}(\theta) \right] \:\d x  \\
 + \int_D \dv(\theta) Ae(u_\Gamma): e(p_\Gamma) \:\d x  - \int_D A C(u_\Gamma,\theta) : e(p_\Gamma) \:\d x - \int_D A e(u_\Gamma) : C(p_\Gamma,\theta)\:\d x.
\end{multline}
\par\medskip 

\noindent \textit{Step 4.} At first, we have: 
$$ \int_D \dv(\theta) j(x,u_{\Gamma}) \:\d x +  \int_D \nabla_x j(x,u_{\Gamma}) \cdot \theta \:\d x  = -\int_{\Gamma} \left[ j(x,u_\Gamma) \right] \:\theta\cdot n \:\d s + R(\theta), $$
where again, $R(\theta)$ is a (possibly changing from line to line) collection of integrals on $D$ depending only on $\theta$ (not on its derivatives), or surface integrals on $\Gamma$ involving only the tangential component of $\theta$ inside $\Gamma$.

Then, a use of the Green's formula yields: 
$$\begin{array}{>{\displaystyle}cc>{\displaystyle}l} 
 \int_D \dv(\theta) Ae(u_\Gamma): e(p_\Gamma) \:\d x  &=&  -\int_\Gamma \left[ Ae(u_\Gamma): e(p_\Gamma) \right] (\theta\cdot n) \:\d s  \\[1em]
 &=&  -\int_\Gamma (Ae(p_\Gamma) n ) \cdot \left[ \nabla u_\Gamma n\right] (\theta\cdot n) \:\d s -\int_\Gamma (Ae(p_\Gamma) \tau ) \cdot \left[ \nabla u_\Gamma \tau \right] (\theta\cdot n) \:\d s \\[1em] 
 &=&  -\int_\Gamma (Ae(p_\Gamma) n ) \cdot \left[ \nabla u_\Gamma n\right] (\theta\cdot n) \:\d s - \frac{1}{\lvert\Gamma\lvert} \int_\Gamma (Ae(p_\Gamma) \tau ) \cdot  g^\prime(s_\Gamma(x)) (\theta\cdot n) \:\d s. \\[1em] 
 \end{array}
 $$
Likewise, we have: 
$$\begin{array}{>{\displaystyle}cc>{\displaystyle}l} 
  \int_D A C(u_\Gamma,\theta) : e(p_\Gamma) \:\d x &=& \int_D \nabla u_\Omega \nabla \theta : \left( Ae(p_\Gamma) \right) \:\d x  \\[1em]
   &=& \int_D  \nabla \theta : \left( \nabla u_\Omega^T Ae(p_\Gamma) \right) \:\d x  \\
   &=& - \int_\Gamma  \left( Ae(p_\Gamma)n \right) \cdot \left[ \nabla u_\Gamma n\right] \:\theta\cdot n  \:\d s + R(\theta).
  \end{array} 
$$
Finally, 
$$  \int_D A C(p_\Gamma,\theta) : e(u_\Gamma) \:\d x = - \int_\Gamma  \left( Ae(u_\Gamma)n \right) \cdot \left[ \nabla p_\Gamma  n\right] \:\theta\cdot n  \:\d s + R(\theta) = R(\theta),$$
since $p_\Gamma$ is smooth in the neighborhood of $\Gamma$.

Injecting these formulas into the volume form \cref{eq.volformfracelas} and verifying that the integrals in $R(\theta)$ vanish, 
we obtain the desired expression.
\end{proof}
\bibliographystyle{siam}
\bibliography{genbib.bib}

\end{document}